\patchcmd{\subsection}{-.5em}{.5em}{}{}
\patchcmd{\subsubsection}{-.5em}{.5em}{}{}
\numberwithin{equation}{section}
\newcommand{\cC}{\mathcal{C}}
\newcommand{\cF}{\mathcal{F}}
\newcommand{\cL}{\mathcal{L}}
\newcommand{\cM}{\mathcal{M}}
\newcommand{\cP}{\mathcal{P}}
\newcommand{\cS}{\mathcal{S}}
\newcommand{\bF}{\mathbb{F}}
\newcommand{\bN}{\mathbb{N}}
\newcommand{\bR}{\mathbb{R}}
\newcommand{\bZ}{\mathbb{Z}}
\newcommand{\ra}{\rightarrow}
\newcommand{\qand}{\quad \textrm{and} \quad}
\newcommand{\qqand}{\qquad \textrm{and} \qquad}
\newcommand\subsetsim{\mathrel{%
\ooalign{\raise0.2ex\hbox{$\subset$}\cr\hidewidth\raise-0.8ex\hbox{\scalebox{0.9}{$\sim$}}\hidewidth\cr}}}
\newcommand{\eps}{\varepsilon}
\DeclareMathOperator{\supp}{supp}
\DeclareMathOperator{\Aff}{Aff}
\theoremstyle{theorem}
\newtheorem{theorem}{Theorem}[section]
\newtheorem{corollary}[theorem]{Corollary}
\newtheorem{lemma}[theorem]{Lemma}
\newtheorem{scholium}[theorem]{Scholium}
\theoremstyle{definition}
\newtheorem{definition}[theorem]{Definition}
\newtheorem{remark}[theorem]{Remark}
\newtheorem{example}[theorem]{Example}
\begin{document}

\title{Product set phenomena for measured groups}

\author{Michael Bj\"orklund}

\address{Department of Mathematics, Chalmers, Gothenburg, Sweden}
\email{micbjo@chalmers.se}

\keywords{}

\subjclass[2010]{}

\date{}


\maketitle

\begin{abstract}
Following the works of Furstenberg and Glasner on stationary means, we strengthen and extend in this paper 
some recent results by Di Nasso, Goldbring, Jin, Leth, Lupini and Mahlburg on piecewise syndeticity of product 
sets in countable \textsc{amenable} groups to general countable measured groups. We point out several fundamental differences between the behavior of products of "large" sets in Liouville and non-Liouville measured groups. 

As a (very) special case of our main results, we show that if $G$ is a free group of finite rank, and $A$ and $B$ are "spherically large" subsets of $G$, then there exists a finite set $F \subset G$ such that $AFB$ is thick. The position
of the set $F$ is curious, but seems to be necessary; in fact, we can produce \emph{left thick} sets $A, B \subset G$ 
such that $B$ is "spherically large", but $AB$ is \emph{not} piecewise syndetic. On the other hand, if $A$ is spherically
large, then $AA^{-1}$ is always piecewise syndetic \emph{and} left piecewise syndetic. However, contrary to what happens for amenable groups,  $AA^{-1}$ may fail to be syndetic.
The same phenomena occur for many other (even amenable, but non-Liouville) measured groups.

Our proofs are based on some ergodic-theoretical results concerning stationary 
actions which should be of independent interest.  
\end{abstract}

\section{Introduction}

Let $G$ be a countable group. Given two sets $A, B \subset G$, we define their \textbf{product set} $AB$ by
\[
AB = \big\{ ab \, : \, a \in A, \: b \in B \big\}.
\]
Let $(\beta_n)$ be a sequence of probability measures on $G$, and define for $B \subset G$, the 
\textbf{upper} and \textbf{lower asymptotic density} of $B$ with respect to $(\beta_n)$ by
\begin{equation}
\label{defbetan}
\beta^*(B) = \varlimsup_n \beta_n(B) 
\qand
\beta_*(B) = \varliminf_n \beta_n(B).
\end{equation}
We say that a set $C \subset G$ is 
\begin{itemize}
\item \textbf{thick} if for every finite set $L \subset G$, there is $g \in G$ such that $Lg \subset C$.
\item \textbf{left thick} if for every finite set $L \subset G$, there is $g \in G$ such that $gL \subset C$.\\

\item \textbf{syndetic} if there exists a finite set $F \subset G$ such that $FC = G$. 
\item \textbf{left syndetic} if there exists a finite set $F \subset G$ such that $CF = G$. \\

\item \textbf{piecewise syndetic} if there exists a finite set $F \subset G$ such that $FC$ is thick.
\item \textbf{piecewise left syndetic} if there exists a finite set $F \subset G$ such that $CF$ is thick.
\end{itemize}

Of course, if $G$ is abelian, then all of the paired definitions above can be merged, and we can without
confusion talk about thick, syndetic and piecewise syndetic sets in $G$. However, as soon as $G$ has
at least one element with an \textsc{infinite} conjugation class, then all six notions above are distinct.

\subsection{Amenable groups}

A sequence $(F_n)$ of finite subsets of $G$ is said to be \textbf{F\o lner} if
\[
\forall\, s \in G, \quad \varlimsup_n \, \frac{|sF_n \Delta F_n|}{|F_n|} = 0,
\]
and $G$ is \textbf{amenable} if it admits a F\o lner sequence. Every countable group of sub-exponential growth,
as well as every countable solvable (in particular, abelian) group is amenable. On the other hand, every non-cyclic free 
group is non-amenable. \\

Suppose that $G$ is amenable and let $(F_n)$ be a F\o lner sequence in $G$. Define a sequence $(\beta_n)$ of
probability measures on $G$ by
\begin{equation}
\label{defbetan}
\beta_n(B) = \frac{|B \cap F_n|}{|F_n|}, \quad \textrm{for $B \subset G$}.
\end{equation}
We say that $B \subset G$ is \textbf{large with respect to $(F_n)$} if $\beta^*(B) > 0$, and \textbf{large} if it
is large with respect to \emph{some} F\o lner sequence in $G$. A classical result of F\o lner \cite{Fol} says
that whenever $B \subset G$ is a large set, then the difference set $BB^{-1}$ is syndetic (and thus left syndetic
as well by symmetry). This can be thought 
of as a discrete analogue of a classical and useful result by Steinhaus, which asserts that the difference 
set of any Borel set of the reals with positive Lebesgue measure contains an interval. \\

More recently, Jin proved in his influential paper \cite{Jin} that if $A, B \subset \bZ$ are two large sets, then
their product (sum) set is piecewise syndetic. Jin referred to this observation as the \emph{sumset phenomenon}.
A few years later, Beiglb\"ock, Bergelson and Fish \cite{BBF} extended the sumset phenomenon to products of 
large subsets in arbitrary countable amenable groups. Even more recently, Di Nasso, Goldbring, Jin, Leth, Lupini 
and Mahlburg in the papers \cite{BLAJ0} and \cite{BLAJ1} were able to "localize" the sumset phenomenon in any
countable amenable group $G$. More precisely, they prove in \cite{BLAJ1} that if $A \subset G$ is large, 
and $(F_n)$ is any F\o lner sequence in $G$, then, for every $B \subset G$ there exists a \emph{finite} set 
$F \subset G$ such that
\begin{equation}
\label{di1}
\beta^*\Big(\bigcap_{l \in L} l FAB\Big) \geq \beta^*(B), \quad \textrm{for every \emph{finite} set $L \subset G$},
\end{equation}
where $\beta^*$ is defined using the sequence $(\beta_n)$ in \eqref{defbetan}. In particular, if $\beta^*(B)$ is 
assumed to be positive, then this inequality implies that all of the possible intersections on the left hand side are non-empty, which readily shows that $FAB$ is thick, and thus $AB$ is piecewise syndetic. Moreover, for any 
$\eps > 0$, they construct a finite set $K \subset G$, which is independent of $B \subset G$ and the 
F\o lner sequence $(F_n)$ such that
\begin{equation}
\label{di2}
\beta_*\Big(\bigcap_{l \in L} l KAB\Big) \geq \beta_*(B) - \eps, \quad \textrm{for every \emph{finite} set $L \subset G$}.
\end{equation}
The proofs in the papers \cite{BLAJ0} and \cite{BLAJ1} combine elementary combinatorics and non-standard analysis. The aim of this paper is to outline
an alternative approach using ergodic theory, which has the advantage of not only providing a strengthening of the results mentioned above, but is flexible enough to also tackle the case of products of sets in arbitrary countable groups
which are "large" with respect to certain stationary (or harmonic) densities. This line of study was initiated by Furstenberg and Glasner in the works \cite{FG1} and \cite{FG2}, and it was continued by the author and Fish in \cite{BF3}. Before we present our main results in their 
most general setting, we specialize them first to products in free groups.

\subsection{Spherical densities in free groups}
\label{subsec:sphere}
Let $\bF_r$ denote the free group on a set of $r$ (free) generators, which we here denote by 
$S = \big\{a_1^{\pm 1},\ldots, a_r^{\pm 1} \big\}$. In particular, we have $\bF_1 \cong \bZ$, 
which is an abelian, and thus amenable, group. However, for $r \geq 2$, the group $\bF_r$ is \emph{not} amenable. \\

We set $S_o = \{e\}$ and for $k \geq 1$, we define $S_k = S^{k} \setminus S^{k-1}$. One recognizes $S_k$ as
the sphere of radius $k$ in the Cayley graph associated to $(\bF_r,S)$. We define the 
\textbf{upper} and \textbf{lower spherical density} of a subset $B \subset \bF_r$ by
\[
\overline{s}_r(B) = \varlimsup_n \, \frac{1}{n} \sum_{k=0}^{n-1} \frac{|B \cap S_k|}{|S_k|}
\qqand 
\underline{s}_{r}(B) = \varliminf_n \, \frac{1}{n} \sum_{k=0}^{n-1} \frac{|B \cap S_k|}{|S_k|}
\]
respectively. We say that $B \subset \bF_d$ is \textbf{spherically large} if $\overline{s}_r(B) > 0$.  As far as
we know, these densities have not been studied before, except when  $r = 1$, in which case we recover the 
classical and well-studied upper and lower asymptotic densities for a subset $B \subset \bZ$ defined by
\[
\overline{s}_1(B) = \varlimsup_n \, \frac{|B \cap [-n,n]|}{2n+1}
\qand
\underline{s}_1(B) = \varliminf_n \, \frac{|B \cap [-n,n]|}{2n+1}.
\]
We can now state the following analogue for spherical 
densities of the results by Di Nasso, Goldbring, Jin, Leth, Lupini and Mahlburg (for amenable groups) summarized in \eqref{di1} and \eqref{di2} above.

\begin{theorem}
\label{spherical}
Suppose that $A \subset \bF_r$ is spherically large. For every $B \subset \bF_r$, there exists a finite set 
$F \subset G$ such that for every finite set $L \subset G$, we have
\[
\overline{s}_r\Big(\bigcap_{l \in L} l AFB\Big) \geq \overline{s}_r(B).
\]
Furthermore, for every $\eps > 0$, there exists a finite set $K \subset G$ such that for \emph{every} $B \subset G$
and \emph{finite} set $L \subset G$, we have
\[
\underline{s}_r\Big(\bigcap_{l \in L} l AKB\Big) \geq \underline{s}_r(B) - \eps.
\]
\end{theorem}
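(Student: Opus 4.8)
The plan is to realize the spherical averages in an ergodic-theoretic framework and reduce the product-set statement to a positivity statement for a stationary measure. First I would introduce the measure $\mu$ on $\bF_r$ supported on $S$ which is the natural nearest-neighbour step distribution (uniform on the $2r$ generators, or the appropriate lazy/adapted variant so that the induced random walk has the sphere $S_k$ carrying mass governed by the $n$-th convolution power), and observe that the Cesàro spherical averages $\frac1n\sum_{k=0}^{n-1}\frac{|B\cap S_k|}{|S_k|}$ agree, up to the standard combinatorial identity $\mu^{*k}$-mass of $S_k$ equals $1$ on each sphere, with the Cesàro averages of $\mu^{*k}(g^{-1}B)$ evaluated at $g=e$; this puts us exactly in the setting of $\mu$-stationary dynamics over $(\bF_r,\mu)$. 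The point of the Cesàro average is that a weak-$*$ limit of $\frac1n\sum_{k=0}^{n-1}\mu^{*k}$-pushforwards along the orbit map produces a $\mu$-stationary mean, and $\overline{s}_r(B)$ is recovered as the value of this mean on a suitable cylinder set.

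Next I would invoke the general stationary-action machinery (the abstract results the paper promises, in the vein of Furstenberg--Glasner and Björklund--Fish): build from the density $\overline{s}_r$ a $\mu$-stationary probability measure $\nu$ on a compact $G$-space $X$ together with a point $x_0$ and clopen sets $\widehat A,\widehat B$ such that $\nu(\widehat A)\ge \overline{s}_r(A)>0$, $\nu(\widehat B)=\overline{s}_r(B)$, and such that membership of $g$ in the product set $AFB$ translates into $g x_0$ lying in $\widehat A\,F\,\widehat B$-type sets. Because $A$ is spherically large, $\widehat A$ has positive $\nu$-measure, and the crucial structural input is that for a $\mu$-stationary measure the sets $\{g:\ g^{-1}\widehat A \text{ meets } \widehat B \text{ with definite mass}\}$ can be made co-"small" after translating by a finite set $F$; here the position of $F$ between $A$ and $B$ is forced precisely because the stationarity is one-sided (the walk acts on one side), so one controls $\widehat A F$ on the acting side and pairs it with $\widehat B$ on the other. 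Transferring the resulting measure inequality on $X$ back through the orbit map $g\mapsto gx_0$ and the density-to-mean correspondence yields $\overline{s}_r(\bigcap_{l\in L} l AFB)\ge \overline{s}_r(B)$ for every finite $L$, the intersection being handled by the usual trick of intersecting finitely many clopen sets of full relative measure.

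For the second, uniform assertion I would instead extract a $\mu$-\emph{harmonic} (or use the Cesàro/Abel limit giving genuine stationarity) mean and run a compactness argument: the finite set $K$ depending only on $\eps$ comes from the fact that a single $\mu$-stationary measure on the Poisson-type boundary, or more concretely on a fixed universal stationary system, can be approximated to within $\eps$ by the dynamics after a uniformly bounded number of steps, so $K$ can be taken as a ball of radius $R(\eps)$ in the Cayley graph; the lower-density inequality $\underline{s}_r(\bigcap_{l\in L} l AKB)\ge \underline{s}_r(B)-\eps$ then follows because $\underline s_r$ corresponds to the $\liminf$ of the Cesàro averages and the bounded-step approximation loses at most $\eps$ uniformly in $B$ and $L$. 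The main obstacle I expect is the first step done \emph{quantitatively with the right placement of $F$}: ensuring that the stationary mean built from $\overline s_r$ genuinely "sees" the product set $AFB$ (and not $FAB$ or $ABF$), which requires carefully tracking on which side the random walk and hence the $G$-action operates, and checking that spherical Cesàro averages — unlike Følner averages — are only \emph{stationary}, not invariant, so that the Steinhaus-type argument must be replaced by its stationary analogue and the extra finite set $F$ is genuinely unavoidable rather than an artifact of the proof.
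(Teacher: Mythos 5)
Your overall strategy --- realize the spherical Ces\`aro averages as $\sigma_1$-stationary means via the Hecke relations, transfer to a compact model by the correspondence principle, and run a Steinhaus-type positivity argument for a stationary measure --- is indeed the route the paper takes. But as written the plan has gaps at exactly the points where the argument is delicate. First, the ``crucial structural input'' you leave as a black box is the whole content of the proof, and mere stationarity of the mean is \emph{not} enough to supply it: Theorem \ref{notLiouville} shows that for general $\cL_p$-large sets the product-set conclusion genuinely fails. What is needed is that the cluster points of the Ces\`aro averages are \emph{Furstenberg} means, i.e.\ stationary means $\eta$ with $\eta(f)=f(e)$ for every bounded right $p$-harmonic $f$; this is what makes the weak ergodic theorem $\int_G \nu(g^{-1}B\cap C)\,d\eta(g)=\nu(B)\nu(C)$ (Lemma \ref{harmergthm}) true and yields the expansion lemma $\nu(A^{-1}B)=1$ whenever $d^*_{\cF_p}(A)+\nu(B)>1$ (Lemma \ref{sumexceed_harm}). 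To feed that lemma one must first enlarge $A$ to $FA$ with $d^*_{\cF_p}(FA)>1-\eps$ (Lemma \ref{lefterg}), and this in turn requires knowing that an extreme point of $\cF_p$ is extreme in all of $\cL_p$ (Lemma \ref{Fpext}, proved via the Poisson boundary and a Yosida--Hewitt decomposition) --- you cannot work directly with the mean realizing $\overline{s}_r(A)$, since the paper shows spherical means are \emph{never} extreme in $\cL_{\sigma_1}$. Note also the orientation: the general mechanism produces $A^{-1}F^{-1}B$ from $\nu((FA)^{-1}B)=1$, so to land on $AFB$ one must apply everything to $A^{-1}$ and use that spherical largeness is inversion-invariant; given that the left/right bookkeeping is precisely what went wrong in the earlier literature this cannot be waved through.

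Second, your mechanism for the uniform assertion does not work as stated. The finite set $K$ is not a ball of radius $R(\eps)$ obtained by ``bounded-step approximation'' of a boundary measure; it is the finite set produced by Lemma \ref{lefterg} from $A$ and $\eps$ alone (its size depends on $A$ through the ergodic stationary measure on the orbit closure of $A$, with no effective control). The uniform loss of $\eps$ over all $B$ and $L$ comes instead from the ergodic decomposition of the (generally non-ergodic) stationary measure on the Bebutov system of $B$: one applies the expansion lemma fibrewise to the ergodic components $\mu$ with $\mu(B)\ge\eps$ and uses $\kappa(B_\eps)\ge\nu(B)-\eps$ (Lemma \ref{eps2}); the first assertion likewise needs Lemma \ref{eps1}. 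The ergodic decomposition step is absent from your plan, and without it neither assertion follows, since the weak ergodic theorem is only valid for ergodic stationary measures.
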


Contrary to what happens for large sets in \emph{amenable} groups, the set $F$ appears \emph{between} $A$ 
and $B$, and not to the left of $AB$. In particular, Theorem \ref{spherical} does \emph{not} (at least not directly) 
imply that $AB$ is piecewise syndetic for all spherically large $A, B \subset \bF_r$. In fact, we do not know whether 
this is true. Our methods do not seem to be able to tackle this question. On the other hand, the following (piecewise) analogue of F\o lner's Theorem \emph{does} hold (see also Theorem \ref{piecewiseFol} below).

\begin{theorem}
\label{spherical2}
Suppose that $A \subset \bF_r$ is spherically large. Then the difference set $AA^{-1}$ is both piecewise syndetic
and piecewise left syndetic.
\end{theorem}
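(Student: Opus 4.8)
\emph{Proof strategy.} The plan is to reduce this, via the stationary‑dynamics apparatus of the paper, to a recurrence statement for stationary systems, and then to run a F\o lner‑type maximality argument adjusted to the fact that spherical density is \emph{not} translation quasi‑invariant. Put $\delta=\overline{s}_r(A)>0$ and let $\mu$ be the uniform probability measure on $S$; the radial recursion $\sigma_1*\sigma_k=\tfrac1{2r}\sigma_{k-1}+\tfrac{2r-1}{2r}\sigma_{k+1}$ shows that the Ces\`aro averages $\beta_n=\tfrac1n\sum_{k<n}\sigma_k$ are asymptotically $\mu$‑invariant, so that any accumulation point $\nu$ of their push‑forwards is $\mu$‑stationary, and in fact (by induction on the recursion) $\sigma_k$‑stationary for every $k$. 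First I would apply the correspondence principle to attach to $A$ a point‑transitive $\mu$‑stationary system $G\acts(X,\nu)$ realised in the orbit‑closure model: a \emph{clopen} set $\tilde A\subset X$, a point $x_0$ with dense orbit, the identification $A=\{g\in G:gx_0\in\tilde A\}$, and $\nu(\tilde A)=\delta$; discarding the trivial fixed configuration (which carries no recurrence) we may also assume the $G$‑translates of $\tilde A$ cover $X$ off a $\nu$‑null set. Since $\tilde A$ is clopen and $Gx_0$ is dense, nonemptiness of a clopen set is detected by the orbit, which yields, for every finite $F\subset G$, the dictionaries $F\cdot(AA^{-1})=\{g:g\tilde A\cap F\tilde A\neq\emptyset\}$ and, symmetrically, $(AA^{-1})\cdot F=\{g:\tilde A\cap gF^{-1}\tilde A\neq\emptyset\}$. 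So it suffices to produce one finite $F$ for which $\{g:g\tilde A\cap F\tilde A\neq\emptyset\}$ is thick, the left statement then following by the mirror argument.

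Next, the recurrence input. Because $\nu$ is $\mu$‑stationary, $P=Q_1$ is a contraction of $L^2(X,\nu)$ fixing constants and preserving $\int\cdot\,d\nu$, where $Q_kf(x)=\int_{S_k}f(gx)\,d\sigma_k(g)$; combining the mean ergodic theorem for $P$ with the operator form of the recursion (which expresses each $Q_k$ as a fixed polynomial in $P$) gives $\tfrac1n\sum_{k<n}Q_k\mathbf 1_{\tilde A}\to h$ in $L^2(\nu)$ with $h$ $P$‑harmonic, $0\le h\le1$ and $\int_X h\,d\nu=\delta>0$. Since the translates of $\tilde A$ cover $X$ up to a null set, countable additivity lets me fix a finite $F\ni e$ with $\int_{F\tilde A}h\,d\nu>0$. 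Using that each $\sigma_k$ is symmetric, $\tfrac1n\sum_{k<n}\int_G\nu(g\tilde A\cap F\tilde A)\,d\sigma_k(g)=\big\langle\mathbf 1_{F\tilde A},\tfrac1n\sum_{k<n}Q_k\mathbf 1_{\tilde A}\big\rangle_\nu\to\int_{F\tilde A}h\,d\nu>0$, so $\beta_n$ gives uniformly positive mass to $\{g:\nu(g\tilde A\cap F\tilde A)>0\}$, which is contained in $F\cdot(AA^{-1})$; hence $\overline{s}_r\big(F\cdot(AA^{-1})\big)>0$.

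It remains to upgrade positive spherical density to thickness. (One cannot expect syndeticity, as the paper stresses; the only cheap case is $A$ itself syndetic, for then the trivial configuration lies outside $X$, a finite family of translates of $\tilde A$ covers the compact $X$, and $F\cdot(AA^{-1})=G$.) In general I would argue by maximality: by $\sigma_k$‑stationarity, $\int_G\nu(g\tilde A)\,d\sigma_k(g)=\delta$ for all $k$, so $\{g:\nu(g\tilde A)\ge\delta/2\}$ has $\beta_n$‑mass $\ge\delta/2$; choose inside it a maximal family $g_1,\dots,g_m$ for which the clopen sets $g_1\tilde A,\dots,g_m\tilde A$ are pairwise $\nu$‑disjoint (so $m\le 2/\delta$), and maximality together with the clopen/dense‑orbit dictionary forces $\{g:\nu(g\tilde A)\ge\delta/2\}\subset\{g_1,\dots,g_m\}\cdot(AA^{-1})$. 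Feeding in the recurrence of the previous paragraph, together with one more finite enlargement of the index set, should then carve out a genuinely thick subset of $F'\cdot(AA^{-1})$ for some finite $F'$. For piecewise left syndeticity, note that $AA^{-1}$ is symmetric and that the entire construction is left–right mirror‑symmetric (the averaging may be run on either side, $\mu$ being symmetric), so the same reasoning produces a finite $F'$ with $(AA^{-1})\cdot F'$ thick.

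The hard part is precisely this last upgrade. A merely $\mu$‑stationary action need not be conservative, so recurrence \emph{at the level of the measure} is far weaker than Poincar\'e recurrence, and a boundary‑like stationary factor contributes essentially no recurrence at all; the thickness of $F'\cdot(AA^{-1})$ must instead be squeezed out of the interaction between the harmonic function $h$, the topological model (clopen sets, dense orbit, compactness of $X$) and the unavoidable finite smoothing set, and — most delicately — one needs a \emph{single} finite $F'$ that works simultaneously for every finite test set occurring in the definition of thickness. This is the point at which the argument must depart from the classical F\o lner argument for amenable groups, and where I expect the real work to lie.
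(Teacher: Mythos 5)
Your proposal gets as far as the paper's setup does --- the Bebutov/orbit-closure correspondence, the dictionary $F(AA^{-1})=\{g: g\tilde A\cap F\tilde A\neq\emptyset\}$, and a recurrence statement showing that $\{g:\nu(g\tilde A\cap F\tilde A)>0\}$ has positive upper spherical density --- but it stops exactly at the step that carries all the weight, and you say so yourself. Positive $\beta_n$-density of $F(AA^{-1})$ does not yield thickness of any $F'(AA^{-1})$: in the non-invariant setting there is no Steinhaus-type bootstrap from density to piecewise syndeticity (the paper's Theorem \ref{notLiouville} and its appendix are devoted precisely to how badly density and syndeticity decouple for non-Liouville measured groups). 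Your maximal-disjoint-family sketch does correctly give a finite set $\{g_1,\dots,g_m\}$ with $\{g:\nu(g\tilde A)\ge\delta/2\}\subset\{g_1,\dots,g_m\}AA^{-1}$, but you are then left needing the level set $\{g:\nu(g\tilde A)\ge\delta/2\}$ (or $\ge\delta$) to be \emph{thick}, and no argument for that is offered.

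The missing idea is the paper's SAT* mechanism. The function $f(g)=\nu(g^{-1}\tilde A)-\nu(\tilde A)$ is right $p$-harmonic with $f(e)=0$, hence is the Poisson transform of some $\phi\in L^\infty(Z,m)$ on the Poisson boundary $(Z,m)$; since $(Z,m)$ is SAT*, one shows (Lemma \ref{FsB}, Lemma \ref{lemmaT}, Corollary \ref{corharmthick}) that $\{g:f(g)\ge 0\}$ is left thick, i.e.\ $S=\{g:\nu(g\tilde A)\ge\nu(\tilde A)\}$ is thick (Corollary \ref{corS}). With this in hand the paper's route is shorter than your maximality scheme: pick an \emph{extreme} $\lambda\in\cL_{\sigma_1}$ with $\lambda(A)>0$ (note that spherical means themselves are never extreme in $\cL_{\sigma_1}$, as the paper shows, so one must maximize over all of $\cL_{\sigma_1}$ via Bauer's principle and use Lemma \ref{onto} to get an ergodic pushforward --- your construction of $\nu$ skips this and your ``translates cover $X$'' reduction implicitly needs it), use ergodicity (Lemma \ref{leftergLp}) to find a finite $F$ with $\nu(F\tilde A)>1-\nu(\tilde A)$, and then Corollary \ref{corU} gives directly that $\{g:\nu(F\tilde A\cap g\tilde A)>0\}\subset FA A^{-1}$ is thick; the mirror version gives piecewise left syndeticity. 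So the gap is genuine: without the harmonic-function/SAT* input (or some substitute for it), the upgrade from positive density to thickness does not go through.
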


The word "piecewise" in Theorem \ref{spherical2} cannot be removed. Indeed, in the appendix of this paper, we construct, for every $0 < \alpha < 1$, a set $A \subset \bF_r$ with $\underline{s}_r(A) = \alpha$ such that 
$AA^{-1}$ is \textsc{not} syndetic. 

\subsection{Connection to stationary densities}

Let us now connect the discussion of spherical densities to the general theme of this paper. We let $\sigma_o = \delta_e$, where $e$ denotes the identity element in $\bF_r$, and for $k \geq 1$ we define the probability measure
$\sigma_k$ on $\bF_r$ by
\[
\sigma_k(B) = \frac{|B \cap S_k|}{|S_k|}, \quad \textrm{for $B \subset \bF_r$}.
\]
We note that $(\sigma_k)$ satisfies the so called \emph{Hecke recurrence relations}, namely
\begin{equation}
\label{hecke}
\sigma_1 * \sigma_k = \frac{1}{2r} \sigma_{k-1} + \big( 1 - \frac{1}{2r}\big) \sigma_{k+1}, \quad \textrm{for all $k \geq 1$}.
\end{equation}
In particular, if we adopt the convention that $\sigma_1^{*0} = \delta_e$, then every $\sigma_k$ can be written as a convex combination of convolution powers of $\sigma_1$. We define the sequence $(\beta_n)$ of probability measures on $\bF_r$ by
\begin{equation}
\label{sigmaseq}
\beta_n = \frac{1}{n} \sum_{k=0}^{n-1} \sigma_k, \quad \textrm{for $n \geq 1$},
\end{equation}
and note that $\beta^* = \overline{s}_r$ and $\beta_* = \underline{s}_r$, where $\beta^*$ and $\beta_*$ are defined
as in \eqref{defbetan}. If we think of $(\beta_n)$ as 
positive and unital functionals in $\ell^\infty(\bF_r)^{*}$, then Banach-Alaoglu's Theorem asserts that the
set $\cS_r$ of weak*-cluster points of $(\beta_n)$ is non-empty. One readily checks that every $\lambda \in \cS_r$
is positive and unital, and it follows from \eqref{hecke} that
\begin{equation}
\label{statlambda}
\sum_{s \in S} \sigma_1(s) \lambda(s^{-1}B) = \lambda(B), \quad \textrm{for all $B \subset \bF_r$},
\end{equation}
where we abuse notation and write $\lambda(B)$ to denote $\lambda(\chi_B)$, for the indicator function 
$\chi_B$ of $B$. Furthermore, we clearly have
\[
\overline{s}_r(B) = \sup\big\{ \lambda(B) \, : \, \lambda \in \cS_r \big\}
\qand
\underline{s}_r(B) = \inf\big\{ \lambda(B) \, : \, \lambda \in \cS_r \big\},
\]
for every subset $B \subset \bF_r$. \\

More generally, let $G$ be a countable group and let $p$ be a probability measure on $G$. We 
say that $p$ is 
\begin{itemize}
\item \textbf{admissible} if the support of $p$ generates $G$ as a semi-group.
\item \textbf{symmetric} if $p(s^{-1}) = p(s)$ for all $s \in G$. 
\end{itemize}
If $p$ is admissible, we refer to $(G,p)$ as a \textbf{measured group}. Note that $(\bF_r,\sigma_1)$ is a (symmetric)
measured group for every $r \geq 1$. \\

If $f \in \ell^\infty(G)$ and $s \in G$, we define
\[
(s \cdot f)(g) = f(s^{-1}g) \qand (f \cdot s)(g) = f(gs), \quad \textrm{for $g \in G$}.
\]
Let $\cM$ denote the weak*-compact and convex set of all positive and unital elements in $\ell^\infty(G)^*$. Such 
functionals are often called \textbf{means} on $G$, and they can be equivalently thought of as \emph{finitely additive}
probability measures on $G$ by writing $\lambda(B)$ for $\lambda(\chi_B)$, where $\chi_B$ denotes the indicator 
function on $B$. Note that the (left) $G$-action on $\ell^\infty(G)$ above induces a (left) $G$-action on $\cM$ by
\[
(s \cdot \eta)(f) = \eta(s^{-1} \cdot f), \quad \textrm{for $s \in G$ and $f \in \ell^\infty(G)$}.
\]
If $p$ is a probability measure on $G$ we write
\begin{equation}
\label{defGact}
f * p = \sum_{s \in G} p(s) (f \cdot s) \qand p * \eta = \sum_{s \in G} p(s) (s \cdot \eta),
\end{equation}
for all $f \in \ell^\infty(G)$ and $\eta \in \cM$, and we define
\begin{equation}
\label{defLp}
H^\infty_r(G,p) = \big\{ f \in \ell^\infty(G) \, : \, f * p = f \big\}
\qand
\cL_p = \big\{ \eta \in \cM \, : \, p * \eta = \eta \big\}.
\end{equation}
Note that the constant functions on $G$ are always contained in  $H^\infty_r(G,p)$, and a straightforward 
application of Kakutani's fixed point theorem shows that the set $\cL_p$ is always non-empty. We shall refer to the  
elements in $H^\infty_r(G,p)$ as \textbf{$p$-harmonic functions}, and to the elements in $\cL_p$ 
as \textbf{$p$-stationary means}. Note that if $G = \bF_r$, then the set $\cS_r$ is contained in $\cL_{\sigma_1}$. \\

We say that $(G,p)$ is \textbf{Liouville} if $H_r^\infty(G,p)$ only consists
of constant functions. It is not hard to show that if $G$ is non-amenable, then $(G,p)$ is never Liouville for
any admissible probability measure $p$ on $G$. The converse does not hold on the nose; however, as was
proved by Rosenblatt \cite{Ro} and Kaimanovich-Vershik \cite{KV} (independently), if $G$ is amenable, then
there is always \emph{at least one} (symmetric) admissible probability measure $p$ on $G$ such that $(G,p)$ is 
Liouville. Note that if $\eta \in \cL_p$ and $\phi \in \ell^\infty(G)$, then $f_\phi(g) = \eta(g^{-1} \cdot \phi)$ belongs to $H^\infty_r(G,p)$.
Hence, if the measured group $(G,p)$ is Liouville (in particular, $G$ is amenable), then $f_\phi$ must be constant for
every $\phi \in \ell^\infty(G)$, and thus $\eta$ belongs to the weak*-closed and convex set $\cL_G$ of \textbf{left-invariant means} on $G$ defined by
\begin{equation}
\label{defLG}
\cL_G = \big\{ \eta \in \cM \, : \, s \cdot \eta = \eta, \enskip \textrm{for all $s \in G$} \big\}.
\end{equation}
We conclude that if $(G,p)$ is Liouville, then $\cL_p = \cL_G$. \\

Let $(G,p)$ be a measured group and let $\cF_p$ denote the set of all $\lambda \in \cL_p$ such that 
\begin{equation}
\label{defFp}
\lambda(f) = f(e), \quad \textrm{for all $f \in H^\infty_r(G,p)$}.
\end{equation}
Such means always exist, and we shall refer to them as \textbf{Furstenberg means} (a variant of these means were first introduced in the paper \cite{BF3} by the author and Fish). To show the existence, let $(\beta_n)$ denote the sequence of means 
on $G$ defined by
\begin{equation}
\label{exFp}
\beta_n(f) = \frac{1}{n} \sum_{k=1}^{n} p^{*k}(f), \quad \textrm{for $f \in \ell^\infty(G)$}. 
\end{equation}
Then each weak*-cluster point $\lambda$ of the sequence $(\beta_n)$ belongs to $\cL_p$, and for every 
$f \in H^\infty_r(G,p)$, we have $\beta_n(f) = f(e)$, and thus $\lambda(f) = f(e)$. In particular, $\lambda \in \cF_p$. We note that in the case when $(G,p) = (\bF_r,\sigma_1)$, then the Hecke recurrence relations imply that the inclusion $\cS_r \subset \cF_{\sigma_1}$ holds.

\subsection{Product set phenomena}

Let $G$ be a countable group. Given a set $\cC$ of means on $G$, we define the \textbf{upper} and 
\textbf{lower $\cC$-density} of a set $B \subset G$ by
\[
d^*_{\cC}(B) = \sup\big\{ \lambda(B) \, : \, \lambda \in \cC \big\}
\qand
d_*^{\cC}(B) = \inf\big\{ \lambda(B) \, : \, \lambda \in \cC \big\}
\]
respectively. We say that $B \subset G$ is \textbf{$\cC$-large} if $d^*_{\cC}(B) > 0$. This notion of largeness
encompasses the two notions introduced earlier: 
\begin{itemize}
\setlength\itemsep{1em}
\item If $G$ is amenable, $(F_n)$ is a F\o lner sequence in $G$ and $(\beta_n)$ is defined as in \eqref{defbetan},
then 
\[
d^{*}_{\cF}(B) = \beta^*(B) \qand d_*^{\cF}(B) = \beta_*(B), \quad \textrm{for all $B \subset G$},
\]
where $\cF \subset \cL_G$ denotes the set of all weak*-cluster points of $(\beta_n)$. In particular, a set $B \subset G$
is large with respect to $(F_n)$ if and only if it is $\cF$-large. Furthermore, it is not hard to show (see e.g. 
Lemma 3.3 in \cite{BBF}) that a set is large (with respect to \emph{some} F\o lner sequence in $G$) if and only if it is $\cL_G$-large, where $\cL_G$ is defined as in \eqref{defLG}. 

\item If $G = \bF_r$ and $\cS_r \subset \cM$ is defined as the set of weak*-cluster points of the sequence $(\beta_n)$
in \eqref{sigmaseq}, then 
\[
d^*_{\cS_r}(B) = \overline{s}_r(B) \qand d_*^{\cS_r}(B) = \underline{s}_r(B), \quad \textrm{for all $B \subset \bF_r$}.
\]
In particular, a set $B \subset \bF_r$ is spherically large if and only if it is $\cS_r$-large. Furthermore, since each $\beta_n$ is a symmetric probability measure on $\bF_r$, we also have that $B \subset \bF_r$ is spherically large
if and only if $B^{-1}$ is spherically large.
\end{itemize}

\subsubsection*{Main combinatorial results for amenable groups}

The following theorem, applied to $\cC = \cF$, immediately yields the main results of Di Nasso, Goldbring, Jin, Leth, Lupini and Mahlburg in \cite{BLAJ0} and \cite{BLAJ1}, summarized in \eqref{di1} and \eqref{di2} above.

\begin{theorem}
\label{denamen}
Let $G$ be a countable \textsc{amenable} group and suppose that $A \subset G$ is $\cL_G$-large. For every weak*-closed 
set $\cC \subset \cL_G$, for every $B \subset G$ and $\eps > 0$, there exist \emph{finite} sets $F, K \subset G$ such that
\[
d^*_{\cC}\Big( \bigcap_{l \in L} l FAB \Big) \geq d^*_{\cC}(B),
\qand 
d_*^{\cC}\Big( \bigcap_{l \in L} l KAB \Big) \geq d_*^{\cC}(B) - \eps,
\]
for every finite set $L \subset G$.
\end{theorem}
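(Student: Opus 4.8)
\emph{Overview.} The plan is to run a Furstenberg-type correspondence: encode $A$, $B$ and the product $AB$ in a single topological $G$-system, attach to a mean in $\cC$ a $G$-invariant measure on it, and rephrase both inequalities as lower bounds for the measure of a shrinking family of translates of one clopen set; the largeness of $A$ will enter through the F\o lner structure recalled in the introduction, together with a maximal inequality.

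\emph{Reduction.} Since $\cC$ is weak$^\ast$-compact and $\lambda\mapsto\lambda(B)$ is weak$^\ast$-continuous, $d^\ast_\cC(B)=\lambda_0(B)$ for some $\lambda_0\in\cC$. Hence the first assertion follows once one produces a finite $F\subseteq G$ — which may depend on $A$, $B$ and $\lambda_0$ — with $\lambda_0\big(\bigcap_{l\in L}lFAB\big)\ge\lambda_0(B)$ for every finite $L$, and the second follows once one produces, for each $\eps>0$, a finite $K\subseteq G$ depending only on $A$ and $\eps$ with $\lambda\big(\bigcap_{l\in L}lKAB\big)\ge\lambda(B)-\eps$ for all $\lambda\in\cL_G$, all $B$ and all finite $L$. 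Fix a mean $\lambda$ of the relevant type, put $\omega_0=(\chi_A,\chi_B,\chi_{AB})\in(\{0,1\}^G)^3$, let $G$ act by the right shift $(g\cdot x)(h)=x(hg)$, and set $\Omega=\overline{G\cdot\omega_0}$. Since $\lambda$ is left-invariant, $\rho(E):=\lambda\big(\{g\in G:g\cdot\omega_0\in E\}\big)$ defines a $G$-invariant Borel probability measure on $\Omega$. Writing $\hat B=\{x=(x_1,x_2,x_3)\in\Omega:x_2(e)=1\}$ and, for a finite set $F\ni e$, $\widehat F=\{x\in\Omega:x_3(f^{-1})=1\ \text{for some }f\in F\}$ (both clopen), one checks directly that $\{g:g\cdot\omega_0\in\hat B\}=B$, that $\{g:g\cdot\omega_0\in l\cdot\widehat F\}=lFAB$ for every $l\in G$, and that the trivial inclusions $faB\subseteq FAB$ ($f\in F$, $a\in A$) promote to the clopen inclusion $c\cdot\hat B\subseteq\widehat F$ for every $c\in FA$ (both sides are clopen and agree with the corresponding subsets of $G$ on the dense orbit). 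Consequently
\[
\rho(\hat B)=\lambda(B),\qquad \rho\Big(\bigcap_{l\in L}l\cdot\widehat F\Big)=\lambda\Big(\bigcap_{l\in L}lFAB\Big),
\]
and the problem is reduced to the measure-theoretic statement: \emph{for a suitable finite $F$ (with the prescribed dependence) the open set $\widehat F\supseteq\bigcup_{c\in FA}c\cdot\hat B$ is recurrent enough that $\rho\big(\bigcap_{l\in L}l\cdot\widehat F\big)\ge\rho(\hat B)$ for every finite $L$}, with an extra $\eps$ of slack in the second case.

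\emph{The ergodic core, and the main obstacle.} The invariant set $\bigcup_{g\in G}g\cdot\hat B$ has $\rho$-measure at least $\rho(\hat B)$, and this is the mechanism one exploits: after passing to an ergodic component of $(\Omega,\rho)$ that still assigns mass $\rho(\hat B)$ to $\hat B$ — furnished by $\lambda_0$ in the first case and, in the second case, by an ergodic component associated with the $\cL_G$-largeness of $A$ via the characterization of largeness recalled in the introduction — one may take this saturation conull, and a maximal inequality for measure-preserving actions of the amenable group $G$ then yields, for every $\delta>0$, a finite $E_0\subseteq G$ with $\rho\big(\bigcup_{c\in E_0}c\cdot\hat B\big)>1-\delta$. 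Largeness of $A$ is then used to convert this into recurrence of $\widehat F$: fixing a left-F\o lner sequence along which $A$ has positive density, one shows that for a suitable finite $F$ the sets $lFA$ ($l\in L$) meet the relevant F\o lner-averaged return-time sets so densely that $\rho\big(\bigcap_{l\in L}l\cdot\widehat F\big)$ remains $\ge\rho(\hat B)-\delta$; letting $\delta\to0$ (and taking $\delta=0$ in the first case, where the argument is run inside a single ergodic component realizing $\lambda_0(B)$) completes the proof. I expect the genuinely hard step to be the \emph{uniformity} of the finite set: for the lower-density inequality a single $K$ must work for every $B$, every finite $L$ and every $\lambda\in\cC$ at once — in particular for means with $\lambda(A)=0$, for which the part of the system coming from $A$ carries no information, so running the maximal inequality inside each $\lambda$-system separately does not produce a uniform $K$ because of non-ergodic behaviour. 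Thus $K$ has to be manufactured once and for all from the dynamics of $A$ and then shown to be adequate in every $\lambda$-system; controlling this transfer — equivalently, decoupling the choice of $K$ from $\lambda$ while keeping the recurrence estimate for the clopen set $\widehat K$ built from $K$ as above — is the crux of the argument, and is precisely where the ergodic-theoretic input announced in the introduction is needed.
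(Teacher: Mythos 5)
Your reduction to a single mean and your correspondence-principle setup are essentially the paper's: Theorem \ref{denamen} is deduced exactly as you describe from a fixed-mean statement (Theorem \ref{mainamen}), and that statement is transferred to a p.m.p.\ $G$-space via a Bebutov-type orbit closure and the pushforward of the mean (the paper encodes only $B$ and keeps $A$ as a subset of $G$ acting on the system; your three-coordinate encoding of $(A,B,AB)$ achieves the same clopen bookkeeping). The gap is in what you call the ergodic core. Your maximal-inequality step produces a finite $E_0$ with $\rho\big(\bigcup_{c\in E_0}c\cdot\hat B\big)>1-\delta$, but $E_0$ depends on the system and on $\hat B$, and you never connect it to a set of the form $FA$; the assertion that the sets $lFA$ ``meet the relevant F\o lner-averaged return-time sets so densely'' that $\rho\big(\bigcap_{l\in L}l\cdot\widehat F\big)\geq\rho(\hat B)-\delta$ is the entire content of the theorem and is not proved. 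You correctly identify the uniformity of $K$ as the crux, but then defer it to unspecified ergodic-theoretic input rather than supplying it.

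The missing mechanism consists of two lemmas that decouple the finite set from the system. First (Lemma \ref{lemmaleftergLp}): for every $\eps>0$ there is a finite $F$ with $d^*_{\cL_G}(FA)>1-\eps$; this is your saturation argument, but applied to $A$ \emph{in its own Bebutov system}, using an \emph{extremal} invariant mean $\lambda_2$ with $\lambda_2(A)=d^*_{\cL_G}(A)>0$ (whose pushforward is ergodic by Lemma \ref{onto}) and $\sigma$-additivity of the ergodic measure --- this is exactly what makes $F$ depend only on $A$ and $\eps$. Second, the expansion lemma (Lemma \ref{sumexceed_pmp}): in \emph{any} ergodic p.m.p.\ $G$-space, $d^*_{\cL_G}(A')+\nu(B')>1$ forces $\nu(A'B')=1$; its proof requires the weak ergodic theorem $\int_G\nu(gB'\cap C)\,d\eta(g)=\nu(B')\nu(C)$ for a suitable mean $\eta$ (Scholium \ref{pmpergthm}), applied with $C=(A'B')^c$. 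Combining the two with the ergodic decomposition gives both assertions; for the sharp first inequality one uses that $\int_0^1\kappa(B_\eps)\,d\eps=\nu(B)$ furnishes an $\eps$ with $\kappa(B_\eps)\geq\nu(B)$ --- note that $\lambda_0$ need not be extremal, so you cannot ``run the argument inside a single ergodic component realizing $\lambda_0(B)$'', nor take $\delta=0$ in your maximal-inequality step, since a finite union $\bigcup_{c\in E_0}c\cdot\hat B$ is not literally conull. Without an expansion lemma of this type your sketch does not close.
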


\begin{remark}
The lower bound for $d_*^{\cF}$, where $\cF$ is defined as above for a fixed choice of F\o lner sequence in $G$, 
was first proved in the case when $G = \bZ^d$ in \cite{BLAJ0}. During the work-shop "Ergodic theory meets Combinatorics" at Banff in July 2015, Goldbring stated the general version (for amenable groups) as an open problem, and shortly thereafter the author of this paper communicated a solution to Goldbring. The six authors of \cite{BLAJ0} and \cite{BLAJ1} were later able to adapt their techniques to prove this version (for $\cC = \cF)$. 
\end{remark}

\subsubsection*{Main combinatorial results for measured groups}

The following theorem, applied to the measured group $(\bF_r,\sigma_1)$, the weak*-closed subset $\cC = \cS_r \subset \cL_{\sigma_1}$ and a spherically large set $A \subset G$, immediately yields Theorem \ref{spherical}. Indeed, we note that if $A$ is spherically large, then so is $A^{-1}$ (by the symmetry of $\sigma_k$), and thus $\cF_{\sigma_1}$-large by the inclusion $\cS_r \subset \cF_{\sigma_1}$.

\begin{theorem}
\label{dengen}
Let $(G,p)$ be a measured group and suppose that $A \subset G$ is $\cF_p$-large. For every weak*-closed set $\cC \subset \cL_p$, for every $B \subset G$ and $\eps > 0$, there exist \emph{finite} sets $F, K \subset G$ such that
\[
d^*_{\cC}\Big( \bigcap_{l \in L} l A^{-1}FB \Big) \geq d^*_{\cC}(B)
\qand 
d_*^{\cC}\Big( \bigcap_{l \in L} l A^{-1}KB \Big) \geq d_*^{\cC}(B) - \eps,
\]
for every finite set $L \subset G$.
\end{theorem}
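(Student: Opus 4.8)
The plan is to reduce Theorem~\ref{dengen} to an ergodic statement about $p$-stationary $G$-actions through a Furstenberg-type correspondence, and then to exploit a \emph{saturation} property of such actions in which the hypothesis that $A$ is $\cF_p$-large is the decisive input.

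\emph{Correspondence.} I would work on $\Omega=\{0,1\}^{G}$ with the product topology and the two commuting continuous $G$-actions $(s{\cdot}\omega)(x)=\omega(s^{-1}x)$ and $(\rho_s\omega)(x)=\omega(xs)$, the latter a genuine left action. Attach to $B$ the orbit map $\iota_B(g)=\chi_{Bg^{-1}}$, which satisfies $\rho_s\iota_B(g)=\iota_B(sg)$ and $\iota_B^{-1}(D)=B$ for the clopen cylinder $D=\{\omega:\omega(e)=1\}$. For $\lambda\in\cC$, representing the positive unital functional $f\mapsto\lambda\big(g\mapsto f(\iota_B(g))\big)$ on $C(\Omega)$ by a Borel probability measure $m_\lambda$, one checks (using $\rho_s\iota_B(g)=\iota_B(sg)$ together with the definitions of the $G$-action on $\cM$ and of $p*\cdot$) that $p*\lambda=\lambda$ becomes $p$-stationarity of $m_\lambda$ for the $\rho$-action, and that $m_\lambda(D)=\lambda(B)$. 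With $V_C=\{\omega:\supp\omega\cap C\neq\emptyset\}$, a bookkeeping computation yields the dictionary
\[
A^{-1}FB=\iota_B^{-1}\!\big(V_{F^{-1}A}\big),\qquad
\bigcap_{l\in L}lA^{-1}FB=\iota_B^{-1}\!\Big(\bigcap_{l\in L}\rho_l V_{F^{-1}A}\Big),\qquad
V_{F^{-1}A}=\bigcup_{a\in A}\rho_{a^{-1}}\Big(\bigcup_{f\in F}f^{-1}{\cdot}D\Big),
\]
and since the cylinders on the right are \emph{open}, lower semicontinuity of their indicators gives $\lambda\big(\bigcap_{l\in L}lA^{-1}FB\big)\ge m_\lambda\big(\bigcap_{l\in L}\rho_l V_{F^{-1}A}\big)$ for each $\lambda\in\cC$. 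Hence it suffices to produce one finite $F\subset G$ with $\sup_{\lambda\in\cC}m_\lambda\big(\bigcap_{l\in L}\rho_l V_{F^{-1}A}\big)\ge\sup_{\lambda\in\cC}m_\lambda(D)=d^*_{\cC}(B)$ for every finite $L$, and similarly a finite $K$ (independent of $B$) controlling the infimum up to $\eps$.

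\emph{The ergodic engine.} Fix a Furstenberg mean $\eta\in\cF_p$ with $\eta(A)=\alpha>0$. Two facts are used: (i) by $(p,\rho)$-stationarity of $m_\lambda$, the function $g\mapsto\int_\Omega\psi(\rho_g\omega)\,dm_\lambda(\omega)=m_\lambda(\rho_{g^{-1}}E)$ (for $\psi=\chi_E$, $E$ Borel) is bounded $p$-harmonic, so $\eta$ evaluates it at $e$, giving $\eta\big(g\mapsto m_\lambda(\rho_{g^{-1}}E)\big)=m_\lambda(E)$; (ii) a non-negative $p$-harmonic function that is positive at $e$ is positive everywhere, by admissibility of $p$. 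Combining (i)–(ii) with the structure of stationary systems, one shows that the translates $\{\rho_{a^{-1}}:a\in A\}$ saturate any $(p,\rho)$-stationary system quantitatively — the $m_\lambda$-mass of $\bigcup_{a\in A}\rho_{a^{-1}}E$ is bounded from below through $\alpha$ and $m_\lambda(E)$, and a \emph{finite} subfamily of $\{\rho_{a^{-1}}E:a\in A\}$ already realizes the limiting mass up to any prescribed error. Using the dictionary above — first thicken $D$ by a finite left-translation set $F$, then saturate along $A$ — together with weak*-compactness of $\cC$ to make $F$ uniform over $\lambda$, and applying the estimate simultaneously to the finite family $\{V_{F^{-1}Al^{-1}}\}_{l\in L}$ to absorb the intersection, one obtains the first inequality. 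Running the same scheme with $\inf_{\lambda\in\cC}$ in place of $\sup_{\lambda\in\cC}$ (and $B$ in its worst case) gives the second; the passage from ``mass near the saturated value'' to ``mass of a finite intersection'' now costs an $\eps$ and makes $K$ independent of $B$.

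\emph{Main obstacle.} The hard part is the engine itself: upgrading the single scalar inequality $\eta(A)>0$ to a \emph{uniform and finite} saturation statement valid for all $p$-stationary systems and stable under intersecting over $L$. This is exactly where one must invoke the structure theory of stationary actions — the Poisson boundary as the universal $p$-ergodic system, the associated disintegrations and mean ergodic theorems — in order to turn the everywhere-positivity of $g\mapsto m_\lambda(\rho_{g^{-1}}E)$ into a covering by finitely many group translates; this is the ergodic-theoretic core announced in the abstract. Granting it, the correspondence, the semicontinuity bounds, and the bookkeeping of the first step are routine.
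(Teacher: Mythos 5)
Your correspondence step is sound and is essentially the paper's own reduction: one first passes from the weak*-closed family $\cC$ to a single extremizing mean $\lambda \in \cC$ by weak*-compactness, then transports $\lambda$ to a $p$-stationary measure on the orbit closure of $B$ in $2^G$ (the Bebutov triple), and recovers the combinatorial inequality from the dynamical one by exactly the lower-semicontinuity estimate for open sets that you invoke. Your identity $A^{-1}FB=\iota_B^{-1}(V_{F^{-1}A})$ and the observation that $g\mapsto m_\lambda(\rho_{g^{-1}}E)$ is \emph{right} $p$-harmonic, so that a Furstenberg mean evaluates it at $e$, are both correct and both appear in equivalent form in the paper.

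The gap is that the ``ergodic engine'' you ask to be granted is the entire content of the theorem, and the quantitative shape you ascribe to it is not the statement that is actually available. There is no lower bound on $m_\lambda\big(\bigcup_{a\in A}\rho_{a^{-1}}E\big)$ ``through $\alpha$ and $m_\lambda(E)$'' for general $\alpha=\eta(A)>0$; what holds (Lemma~\ref{sumexceed_harm}, proved via the weak ergodic theorem $\int_G \nu(g^{-1}B\cap C)\,d\eta(g)=\nu(B)\nu(C)$ for $\eta\in\cF_p$ on an \emph{ergodic} stationary system) is a threshold statement: if $d^*_{\cF_p}(A')+\mu(E)>1$ then $\mu(A'^{-1}E)=1$, i.e.\ \emph{full} measure, which is also why the intersection over $L$ (indeed over all of $G$) costs nothing --- your plan of treating the finite family $\{V_{F^{-1}Al^{-1}}\}_{l\in L}$ ``simultaneously'' with a finite subfamily of $\{\rho_{a^{-1}}E\}$ realizing the mass ``up to a prescribed error'' would lose $|L|\cdot(\textrm{error})$ and cannot give a bound uniform over all finite $L$. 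To reach the threshold one must thicken the subset $A$ of the \emph{group} to $F^{-1}A$ with $d^*_{\cF_p}(F^{-1}A)>1-\eps$, where $\eps$ is dictated by the ergodic decomposition of $m_\lambda$ restricted to $D$; your phrase ``first thicken $D$'' points the wrong way, since a single finite set making $\mu(FD)$ large uniformly over all ergodic components with $\mu(D)\ge\eps$ is not available. Finally, the covering lemma for $A$ is itself nontrivial: one needs an extremal mean in $\cF_p$ realizing $d^*_{\cF_p}(A)$ to be extremal in all of $\cL_p$ (Lemma~\ref{Fpext}, via the Poisson boundary and a Hewitt--Yosida decomposition of $L^\infty(Z,m)^*$), and only then does ergodicity of the associated Bebutov measure produce the finite set $F$. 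None of these three ingredients is supplied or reduced to your facts (i)--(ii), so what you have is a correct reduction together with a list of what remains to be proved, not a proof.
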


\begin{remark}
In the recent paper \cite{BF3}, it is claimed (Theorem 1.1) that if $(G,p)$ is a (symmetric) measured group, $A$ is $\cF_p$-large and $B$ is $\cL_p$-large, then $AB$ is piecewise syndetic. However, in the proof of this theorem, two (connected) serious left/right mistakes are made. Lemma \ref{Fpext} below corrects one of these 
mistakes (Corollary 3.15 in \cite{BF3}). More seriously, in a crucial passage (in the proof of Lemma 5.5 in \cite{BF3}), the defining property \eqref{Fpext} for a mean $\lambda \in \cF_p$ is used for a \emph{left} $p$-harmonic function (fixed point for convolution with $p$ on the left). Such a function is never $p$-harmonic unless it is constant. This is not a problem for elements in $\cF_p$ which are weak* cluster points of the sequence \eqref{exFp}, but at the place in \cite{BF3} where Lemma 5.5 is used (in the proof of Proposition 5.4), no freedom in choosing $\lambda \in \cF_p$ is allowed. 
During an attempt to rectify these mistakes, it was realized that the circle of ideas presented in \cite{BF3} can only be used to prove that there exists a finite set $F \subset G$ such that $A^{-1}FB$ is thick. Theorem \ref{dengen} can be viewed as a "quantification" of this correction. 
\end{remark}

We stress that if $\cF_p$ is replaced with $\cL_p$, then the finite set $F$ in Theorem \ref{dengen} cannot be moved to the left of $A^{-1}$; in fact, $A^{-1}B$ need not even be piecewise syndetic in general as the following result shows (the proof is given in Section \ref{sec:prfthmnotliou}).

\begin{theorem}
\label{notLiouville}
There is a measured group $(G,p)$ and subsets $A, B \subset G$ such that
\begin{itemize}
\item $A$ is thick,
\item $B$ is $\cF_p$-large and left thick, and
\item $A^{-1}B$ is \textsc{not} piecewise syndetic. 
\end{itemize}
In fact, $G$ can be chosen to be amenable.
\end{theorem}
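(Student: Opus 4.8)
The plan is to build the example inside a lamplighter-type group so that the non-Liouville behaviour of $(G,p)$ can be exploited to defeat piecewise syndeticity of $A^{-1}B$. First I would take $G = \bigoplus_{\bZ} \bZ/2\bZ \rtimes \bZ$ (the classical lamplighter, which is amenable) and choose $p$ to be a symmetric, finitely supported, admissible measure whose projection to the $\bZ$ factor is the simple random walk; this is a standard non-Liouville measured group, and its space of bounded $p$-harmonic functions is the Poisson boundary realised as the two boundary "configurations at $\pm\infty$" of the lamps. The key structural feature I would extract is a $p$-harmonic function $h$ taking values in $[0,1]$ which essentially records the limiting lamp configuration toward $+\infty$; more concretely, I want a clopen subset of the boundary whose pullback gives a nonconstant $h \in H^\infty_r(G,p)$.

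Next I would use the Furstenberg mean $\lambda \in \cF_p$, which by definition satisfies $\lambda(f) = f(e)$ for every $p$-harmonic $f$. I would define $B$ as (a slight thickening of) the superlevel set $\{g : h(g) > 1/2\}$, chosen so that $e \in B$ forces $\lambda(B) = h(e)$ bounded away from $0$; this makes $B$ both $\cF_p$-large and, after thickening along the $\bZ$-direction, left thick. For $A$ I would take a thick set that is "sparse in the lamp directions" in a controlled way — for instance $A$ supported on group elements whose $\bZ$-coordinate lies in a thick subset of $\bZ$ but whose lamp configuration is trivial on a large window — so that $A^{-1}$ translates $B$ only in directions that cannot realign the boundary data. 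The heart of the argument is then to show that for every finite $F \subset G$, the set $F A^{-1} B$ fails to be thick: translating $B$ by the finitely many elements of $F$ and by $A^{-1}$ can only shift the "lamp window" by a bounded amount plus a translation parameter from $A$, and because $h$ is genuinely boundary-dependent one can find, for any prescribed finite pattern $L$, no common translate contained in $F A^{-1} B$.

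The main obstacle — and the step I expect to absorb most of the work — is the last one: showing $A^{-1}B$ (and even $FA^{-1}B$) is not piecewise syndetic, i.e. producing, for every finite $F$, a finite $L$ with $\bigcap_{l\in L} l^{-1} F A^{-1} B = \emptyset$ after every translate. This requires a careful combinatorial/harmonic-analytic argument on the lamplighter: one must track how the operation $B \mapsto A^{-1}B$ mixes the $\bZ$-shift (which $A$ controls) against the lamp coordinates (which $h$ controls), and exploit that a bounded $F$-translation cannot change the "asymptotic sign" of $h$. I would phrase this via the boundary: the indicator of $B$ converges along the $p$-random walk to a boundary function, $A^{-1}$ acts on the boundary by a shift that does not affect the relevant clopen set, and thickness of $FA^{-1}B$ would force that clopen set to have full measure, contradicting nonconstancy of $h$. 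Establishing that the construction of $A$ can be made thick while keeping this rigidity — and checking that $B$ is genuinely left thick — are the routine-but-delicate verifications I would relegate to the detailed proof. The amenability of $G$ is automatic from the choice of group, giving the final sentence of the statement.
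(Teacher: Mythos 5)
Your proposal has two problems, one factual and one structural. The factual one: the lamplighter $\bigoplus_{\bZ}\bZ/2\bZ\rtimes\bZ$ with a \emph{symmetric}, finitely supported, admissible $p$ projecting to the simple random walk on $\bZ$ is \emph{Liouville} — this is exactly the Kaimanovich--Vershik example \cite{KV}: the projected walk on $\bZ$ is recurrent, every lamp is revisited infinitely often, and $H^\infty_r(G,p)$ is trivial. By the remark following Theorem \ref{notLiouville}, for a Liouville $(G,p)$ one has $\cF_p=\cL_p=\cL_G$, and then $A$ thick and $B$ $\cL_G$-large forces $A^{-1}B$ to be thick; so no counterexample can live over your chosen measured group. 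To get a non-Liouville lamplighter you must either pass to base $\bZ^d$ with $d\geq 3$ or introduce drift (a non-symmetric $p$), and the paper in fact avoids lamplighters altogether: its amenable example is a countable subgroup of $\Aff(\bR)$ acting minimally on the \emph{non-compact} space $\bR$ with a unique $p$-stationary SAT* measure (Example \ref{affine}), where non-compactness is what reconciles amenability with non-Liouvilleness.

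The structural gap is in the step you yourself flag as the hard one. You fix $A$ first (thick, "sparse in the lamp directions") and then hope to show $FA^{-1}B$ is never thick; but $A^{-1}B=\bigcup_{a\in A}a^{-1}B$ is a union over an \emph{infinite} thick set of translates of a positive-density set, and nothing in your sketch prevents this union from being all of $G$. The paper's proof reverses the quantifiers: it first fixes a compact set $C\subset Y$ with empty interior and positive $\nu$-measure in a SAT* $(G,p)$-space with a unique stationary measure and minimal action, then uses the SAT* property (Lemma \ref{FsB}) to build a thick $A=\bigcup_n F_ns_n$ with $\nu(AC^c)<1/2$, and defines $B:=(AC^c)^c=\bigcap_{a\in A}aC$, so that $A^{-1}B\subset C$ holds \emph{by construction}. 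Left thickness of $B_y$ then comes from Corollary \ref{leftthicksets}, $\cF_p$-largeness from the pointwise ergodic theorem for the random walk, and the failure of piecewise syndeticity is proved for the \emph{superset} $C_y$: if $FC_y$ were thick, an extremal stationary mean would push forward to a stationary measure giving $FC$ full mass, and uniqueness of the stationary measure plus minimality would force $FC=Y$, contradicting Baire (or compactness of $FC$ when $Y$ is non-compact). Your boundary heuristic gestures at the last contradiction but is missing the containment $A^{-1}B\subset C$ that makes it applicable; without choosing $A$ \emph{after} $C$ via the SAT* mechanism, the argument does not close.
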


\begin{remark}
It is not unlikely that these kinds of examples can be constructed for \emph{every} non-Liouville measured group; 
at least we do not know of an example of a non-Liouville measured group for which the construction in Section 
\ref{sec:prfthmnotliou} does not work. Note that if $(G,p)$ is Liouville (and thus $\cF_p = \cL_p = \cL_G$), $A$ is
thick and $B$ is $\cL_G$-large, then $A^{-1}B$ is always thick (so in particular, piecewise syndetic).
\end{remark}

The following positive result is established in Section \ref{sec:prfpwFol} and generalizes Theorem \ref{spherical2}
above.

\begin{theorem}
\label{piecewiseFol}
Suppose that $A \subset G$ is $\cL_p$-large. Then $AA^{-1}$ is both piecewise syndetic and piecewise left syndetic. 
\end{theorem}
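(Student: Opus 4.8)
The plan is to pass to the $p$-stationary dynamical system attached to $\lambda$ and $A$, and to identify $AA^{-1}$ with a topological return-time set. Concretely, fix $\lambda\in\cL_p$ with $\lambda(A)=\delta>0$, let $\cA\subseteq\ell^\infty(G)$ be the smallest left-$G$-invariant unital $C^*$-subalgebra containing $\chi_A$, let $X$ be its Gelfand spectrum (a compact metrizable $G$-space, since $\cA$ is separable), $x_o\in X$ the character ``evaluation at $e$'', and $\tilde A\subseteq X$ the clopen set corresponding to the idempotent $\chi_A$. Then the orbit $Gx_o$ is dense, $gx_o\in\tilde A\iff g\in A$, and $\lambda$ induces a $p$-stationary Borel probability measure $\nu$ on $X$ with $\nu(\tilde A)=\delta$. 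A short argument using that $Gx_o$ is dense and $\tilde A$ is clopen then gives, for every finite $F\subseteq G$,
\[
FAA^{-1}=\bigl\{\,g\in G:\ g\tilde A\cap F\tilde A\neq\emptyset\,\bigr\},\qquad F\tilde A:=\bigcup_{f\in F}f\tilde A .
\]
So it suffices to produce one finite $F$ such that, for every finite $L\subseteq G$, there is $g\in G$ with $lg\tilde A\cap F\tilde A\neq\emptyset$ for all $l\in L$.

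Next I would record that $g\mapsto\lambda(g^{-1}A)=\nu(g^{-1}\tilde A)$ is the harmonic function $f_{\chi_A}\in H^\infty_r(G,p)$ of the Introduction; it is nonnegative and positive at $e$, and since $\operatorname{supp}p$ generates $G$ as a semigroup, a nonnegative element of $H^\infty_r(G,p)$ that vanishes at one point vanishes everywhere. Hence $\nu(g^{-1}\tilde A)>0$ for every $g\in G$, and in particular $Y:=\operatorname{supp}\nu$ is a closed $G$-invariant set meeting $\tilde A$. This already settles an easy case, which in fact yields more than claimed: if some \emph{minimal} closed $G$-invariant $Z\subseteq X$ meets $\tilde A$, then by minimality every orbit in $Z$ meets the relatively open set $\tilde A\cap Z$, so $\{g\tilde A:g\in G\}$ covers $Z$, and $Z$ being compact we get $Z\subseteq F\tilde A$ for a finite $F$; since $Z$ is $G$-invariant, $g\tilde A\cap F\tilde A\supseteq g\tilde A\cap Z=g(\tilde A\cap Z)\neq\emptyset$ for every $g$, so $FAA^{-1}=G$ and $AA^{-1}$ is even syndetic.

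The hard case is the complementary one, in which $\tilde A$ is disjoint from every minimal subset of $X$; this genuinely occurs (it is forced by the appendix, where $AA^{-1}$ is not syndetic for a spherically large $A$), so finitely many left-translates of $\tilde A$ can no longer cover an invariant set, and one must use that $\nu$ is truly stationary and not invariant. Here the plan is to exploit the averaging identity $\nu=\sum_{g}p^{*n}(g)\,g_{*}\nu$ — equivalently, Poincar\'e recurrence for the measure-preserving skew product $(X\times\Omega,\ \nu\times\bP,\ \hat T)$ with $\hat T(x,\omega)=(\omega_1 x,\theta\omega)$, $\bP=p^{\otimes\bN}$ and $\theta$ the shift, which preserves $\nu\times\bP$ precisely because $\nu$ is $p$-stationary — to manufacture, for a prescribed finite $L$, the required witness $g$ together with a fixed finite $F$ independent of $L$. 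Converting this stationary recurrence of the random walk into the combinatorial statement displayed above is the main obstacle, and it is exactly the point at which the ergodic-theoretic results on stationary actions developed in the earlier sections are needed.

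Finally, since $AA^{-1}$ is symmetric, running the same construction with the right $G$-action on $X$ in place of the left one (equivalently, applying the statement just obtained to $A^{-1}$ inside the opposite group) produces a finite $F'\subseteq G$ with $AA^{-1}F'$ thick, so that $AA^{-1}$ is also piecewise left syndetic.
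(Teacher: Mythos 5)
Your reduction of the statement to a dynamical one is sound and is essentially the paper's: the Gelfand spectrum you build is the Bebutov system of $A$, the identity $FAA^{-1}=\{g\in G: g\tilde A\cap F\tilde A\neq\emptyset\}$ is correct (density of the orbit of $x_o$ plus clopenness of $\tilde A$), and your ``easy case'' observation that $AA^{-1}$ is even syndetic whenever some minimal subset meets $\tilde A$ is a nice remark. But there are two genuine gaps. First, you fix an arbitrary $\lambda\in\cL_p$ with $\lambda(A)>0$, whereas the argument needs $\lambda$ \emph{extremal} in $\cL_p$ (such $\lambda$ with $\lambda(A)=d^*_{\cL_p}(A)$ exist by Bauer's maximum principle), so that the induced measure $\nu$ on $X$ is \emph{ergodic}; ergodicity is what produces a candidate finite set $F$ in the first place, via $\nu(G\tilde A)=1$ and $\sigma$-additivity giving $\nu(F\tilde A)>1-\nu(\tilde A)$ (Lemma \ref{leftergLp}). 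Without this expansion step your scheme has no mechanism for choosing $F$.

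Second, and more seriously, the heart of the proof --- showing that $\{g\in G:\nu(g\tilde A\cap F\tilde A)>0\}$ is thick --- is exactly the step you defer as ``the main obstacle'', and the route you sketch (Poincar\'e recurrence for the skew product $\hat T(x,\omega)=(\omega_1x,\theta\omega)$) cannot deliver it. Poincar\'e recurrence only produces return times along random-walk trajectories, i.e.\ group elements lying in the semigroup generated by $\supp p$; thickness demands that for \emph{every} finite $L$ there is a single $g$ with $Lg$ entirely inside the return set, which is a much stronger, genuinely two-sided statement. The paper obtains it from Corollary \ref{corU}, which rests on Corollary \ref{corharmthick}: for a non-constant $f\in H^\infty_r(G,p)$ with $f(e)\geq 0$, the set $\{g: f(g)\geq 0\}$ is left thick. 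That in turn is proved using the SAT* property of the Poisson boundary (Lemma \ref{FsB} and Lemma \ref{lemmaT}); none of this is supplied by, or replaceable with, the skew-product recurrence you invoke. Your closing symmetry argument for piecewise \emph{left} syndeticity is also unconvincing: passing to the opposite group does not preserve the $p$-stationarity structure when $p$ is not symmetric, and $A^{-1}$ need not be large for any relevant measure there. The paper instead reuses the same $F$ and applies the same thickness criterion to $\{g:\nu(\tilde A\cap gF\tilde A)>0\}\subset A A^{-1}F^{-1}$.
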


In the appendix of this paper, we give examples of
measured groups $(G,p)$ and $\cF_p$-large subsets thereof whose difference sets are \emph{not} syndetic. Such examples were first given by the author
and Fish in \cite{BF3} for free groups, but here we show that these examples can be further generalized to even encompass 
certain \emph{amenable} measured groups. Since difference sets of $\cL_G$-large sets in amenable groups are
always syndetic by a classical theorem of F\o lner \cite{Fol}, amenable examples of such measured groups must necessarily be non-Liouville.

\subsection{Product set phenomena with respect to a fixed mean}

We are now ready to formulate our two main (combinatorial) results, and deduce Theorem 
\ref{denamen} and Theorem \ref{dengen} from them. 

\subsubsection{Amenable groups}

Let us briefly recall an an important notational convention in this paper: If $\lambda$ is a mean on $G$ and $A \subset G$ is a subset, then we abuse notation 
and write $\lambda(A)$ to denote $\lambda(\chi_A)$, where $\chi_A$ is the indicator function for the 
set $A$. 

\begin{theorem}
\label{mainamen}
Let $G$ be a countable \textsc{amenable} group and suppose that $A \subset G$ is a $\cL_G$-large set. Fix $\lambda \in \cL_G$. For every $B \subset G$, there exists a \emph{finite} set $F \subset G$ such that
\[
\lambda\Big( \bigcap_{l \in L} l FAB \Big) \geq \lambda(B), \quad \textrm{for every finite set $L \subset G$}.
\]
Furthermore, for every $\eps > 0$, one can find a finite set $K \subset G$, which only depends on
the set $A$, such that for \emph{every} $B \subset G$,
\begin{equation}
\label{lowbndamen}
\lambda\Big( \bigcap_{l \in L} l KAB \Big) \geq \lambda(B) - \eps, \quad \textrm{for every finite set $L \subset G$}.
\end{equation}
\end{theorem}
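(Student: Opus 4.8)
The plan is to pass to a dynamical model and realize $\lambda$ as an integral over an ergodic-type system. Concretely, let $X = \{0,1\}^G$ with the left $G$-action by shifts, and let $x_B \in X$ be the point corresponding to $\chi_B$. Since $\lambda$ is a left-invariant mean on $G$, the functional $f \mapsto \lambda(f(x_B \cdot))$ on $C(X)$ (more precisely, on the orbit closure $\overline{G \cdot x_B}$) gives a $G$-invariant Borel probability measure $\nu$ on $X$, and the clopen set $\widetilde{B} = \{x \in X : x(e) = 1\}$ satisfies $\nu(\widetilde B) = \lambda(B)$ and $\nu(g^{-1}\widetilde B) = \lambda(gB)$ for all $g$. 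The statement to prove then translates into: there is a finite $F \subset G$ with
\[
\nu\Big( \bigcap_{l \in L} l F A \widetilde{B} \Big) \ge \nu(\widetilde B)
\]
for all finite $L$, where $A\widetilde B$ should be read as $\bigcup_{a \in A} a \widetilde B$.

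First I would exploit $\cL_G$-largeness of $A$: this means $\lambda'(A) =: \alpha > 0$ for some left-invariant mean $\lambda'$, equivalently (Lemma 3.3 in \cite{BBF}, quoted in the excerpt) that $A$ is large with respect to some F\o lner sequence $(F_n)$. The key combinatorial input is the standard fact that for a F\o lner sequence, the set of ``return times'' is syndetic in a density sense: more usefully, I would use that $A^{-1}A$ (or $AA^{-1}$) is syndetic by F\o lner's theorem, so there is a finite $F_0$ with $F_0 A A^{-1} = G$, equivalently every $g \in G$ lies in $F_0 A a^{-1}$ for some $a \in A$ depending on $g$. The point of the set $F$ in the theorem is precisely to absorb this $F_0$ together with a ``thickening'' correction. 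I would set $F$ to be a finite subset of $A^{-1}$ of the appropriate (large) density, or rather choose a finite $A_0 \subset A$ with $\lambda'(A_0)$ close to $\alpha$ and take $F = F_0$; then for $g \in G$ one has $g \in F A_0 a_0^{-1} \subset FA a_0^{-1}$, i.e. $a_0 \in A$ with $a_0 \in g^{-1} F A$, so that $x \in a_0 \widetilde B$ whenever $a_0^{-1} x \in \widetilde B$. Averaging this containment against $\nu$ and using left-invariance of $\nu$ to undo the translates $l \in L$, the measure of $\bigcap_{l\in L} lFA\widetilde B$ is bounded below by $\nu(\widetilde B)$ provided the ``witness'' $a_0$ can be chosen \emph{uniformly} over a full-measure set and simultaneously for all $l \in L$ — this is where the freedom to enlarge $F$ by a single finite F\o lner-return set does the job, because $L$ is finite and $FA$ already ``covers'' $G$ on the left.

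For the second, uniform statement \eqref{lowbndamen}: here $K$ must depend only on $A$, not on $B$ or $L$, at the cost of an $\eps$. I would run the same argument but replace the exact covering $F_0 A A^{-1} = G$ by an approximate one: choose a finite $A_1 \subset A$ and finite $K \subset G$ so that $\lambda'(G \setminus K A_1 A_1^{-1})< \eps$, or rather arrange that the set of $g$ for which no witness $a \in A_1$ with $g \in K A_1 a^{-1}$ exists has upper density $< \eps$ with respect to \emph{every} F\o lner sequence — this is possible because $A_1$ can be taken with $\lambda'(A_1) > \alpha - \eps'$ and then $KA_1A_1^{-1}$ is a fixed set whose complement has small density uniformly, by a compactness/uniform-F\o lner argument over the (weak*-compact) set of all invariant means. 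Then $\nu(\bigcap_{l} lKA\widetilde B) \ge \nu(\widetilde B) - \eps$ follows by the same averaging, with the $\eps$ accounting for the density of bad $g$'s.

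The main obstacle I anticipate is making the ``uniform witness'' selection genuinely simultaneous over all $l \in L$ while keeping $F$ (resp. $K$) finite: one must be careful that choosing a witness $a_0 = a_0(x)$ depending measurably on $x$ is compatible with the left-translations, which is exactly why the correction set sits \emph{between} $A$ and $B$ in this theorem (left-invariance of $\nu$ lets us strip the $l$'s, but the $A$-cover must be applied to $x$ before translating). Getting the order of quantifiers right — finite $F$ first, then \emph{all} finite $L$ — forces us to use that $FA$ is left-syndetic (indeed $FA = G$ can be arranged by enlarging $F$ using F\o lner's theorem applied to the large set $A$), so no dependence on $L$ creeps in. The rest is a routine integration/averaging computation with indicator functions, together with the dictionary $\nu(g^{-1}\widetilde B) = \lambda(gB)$.
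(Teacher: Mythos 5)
Your first step --- the correspondence principle sending $\lambda$ to a $G$-invariant measure $\nu$ on the orbit closure of $B$ in $2^G$ with $\nu(\widetilde B)=\lambda(B)$ --- is exactly the paper's reduction (via the Bebutov triple and Corollary \ref{corlowbnd}) to Theorem \ref{mainpmp}. The gap is in what you do next. Your combinatorial engine is the claim that ``$FA=G$ can be arranged by enlarging $F$ using F\o lner's theorem applied to the large set $A$''. This is false: F\o lner's theorem gives a finite $F_0$ with $F_0AA^{-1}=G$, which is strictly weaker than $F_0A=G$; the latter says $A$ is syndetic, and an $\cL_G$-large set need not be syndetic (in $\bZ$, take $A=\bigcup_n[n^2,n^2+n]$, which has upper density $1/2$ along a F\o lner sequence but gaps of unbounded length, so $F+A\neq\bZ$ for every finite $F$). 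Since your ``uniform witness'' argument for $\bigcap_{l\in L} lFA\widetilde B$ rests on $FA$ covering $G$ (or at least being left syndetic, which fails for the same reason), the first assertion is not established.

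The correct substitute, and the heart of the paper's proof, is the density statement of Lemma \ref{lemmaleftergLp}: for every $\eps>0$ there is a finite $F$ with $d^*_{\cL_G}(FA)>1-\eps$; this comes not from F\o lner's theorem but from ergodicity of an \emph{extremal} invariant mean on the Bebutov system of $A$ (there $\nu(GA)=1$, then $\sigma$-additivity produces $F$). But a density bound does not yield pointwise witnesses: the return set $\{g: g^{-1}\cdot x\in\widetilde B\}$ has density $\mu(\widetilde B)$ with respect to one mean, $FA$ has density $>1-\eps$ with respect to another, and two sets whose densities with respect to \emph{different} means sum to more than $1$ need not intersect. Bridging this is exactly Lemma \ref{sumexceed_pmp} (if $d^*_{\cL_p}(FA)+\mu(B)>1$ and $\mu$ is \emph{ergodic}, then $\mu(FAB)=1$), whose proof is a weak ergodic theorem for Furstenberg means --- an ingredient absent from your sketch and not replaceable by the covering argument. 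Finally, since $\nu=T^*\lambda$ need not be ergodic, the argument must pass through the ergodic decomposition (Lemmas \ref{eps1} and \ref{eps2}): $\bigcap_{g\in G} gFAB$ is conull precisely in those ergodic components with $\mu(B)\geq\eps$, and one chooses $\eps$ with $\kappa(B_\eps)\geq\nu(B)$ --- which is also why $F$ must depend on $B$ in the first assertion, whereas your sketch takes $F=F_0$ depending only on $A$ (if that were possible with the exact bound $\geq\lambda(B)$, the second assertion would be vacuous). Your treatment of the second assertion inherits the same gaps, and its ``uniform over all F\o lner sequences by compactness'' step is likewise unsubstantiated where the paper again uses extremal means.
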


\begin{remark}
Since the set $K$ does \emph{not} depend on $\lambda$ or $B$, the lower bound 
\eqref{lowbndamen} holds \emph{uniformly} over all $\lambda$, $B$ and finite sets $L$.
\end{remark}

\begin{proof}[Proof of Theorem \ref{denamen} assuming Theorem \ref{mainamen}]
Fix a weak*-closed set $\cC \subset \cL_G$ and $B \subset G$. By continuity of the 
map $\lambda \mapsto \lambda(B)$ on $\cC$, we can find $\lambda \in \cC$ 
such that $d^*_{\cC}(B) = \lambda(B)$. By Theorem \ref{mainamen}, we know that 
there exists a finite set $F \subset G$ such that for every finite set $L \subset G$, we
have
\[
d^*_{\cC}\Big( \bigcap_{l \in L} l FAB \Big) \geq \lambda\Big( \bigcap_{l \in L} l FAB \Big) \geq \lambda(B) = d^*_{\cC}(B),
\]
which proves the first assertion in Theorem \ref{denamen}. To prove the second assertion, we again
use Theorem \ref{mainamen} to produce, for every $\eps > 0$, a finite set $K \subset G$ such that
\begin{equation}
\label{lowbnd}
\lambda\Big( \bigcap_{l \in L} l KAB \Big) \geq \lambda(B) - \eps, \quad \textrm{for every $\lambda \in \cL_G$ and finite set $L \subset G$}.
\end{equation}
For every finite set $L \subset G$, there exists $\lambda_L \in \cC$ such that
\[
d_*^{\cC}
\Big(
\bigcap_{l \in L} l KAB 
\Big) 
= 
\lambda_L
\Big(
\bigcap_{l \in L} l KAB 
\Big),
\]
and thus, by \eqref{lowbnd}
\[
d_*^{\cC}
\Big(
\bigcap_{l \in L} l KAB 
\Big) 
= 
\lambda_L
\Big(
\bigcap_{l \in L} l KAB 
\Big)
\geq 
\lambda_L(B) - \eps \geq d_*^{\cC}(B) - \eps,
\]
which finishes the proof of the second assertion in Theorem \ref{denamen}.
\end{proof}

\subsubsection{Measured groups}

\begin{theorem}
\label{maingen}
Let $(G,p)$ be a countable \textsc{measured group} and suppose that $A \subset G$ is $\cF_p$-large. Fix $\lambda \in \cL_p$. For every $B \subset G$, there exists a \emph{finite} set $F \subset G$ such that
\[
\lambda\Big( \bigcap_{l \in L} l A^{-1}FB \Big) \geq \lambda(B).
\]
Furthermore, for every $\eps > 0$, one can find a finite set $K \subset G$, which only depends on
the set $A$, such that 
\[
\lambda\Big( \bigcap_{l \in L} l A^{-1}KB \Big) \geq \lambda(B) - \eps.
\]
\end{theorem}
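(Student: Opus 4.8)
The plan is to pass to an ergodic-theoretic model, reduce to a single statement about stationary actions and $\cF_p$-large sets, and extract the finite set $F$ there.

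\smallskip

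\emph{The stationary model.} Given $\lambda\in\cL_p$ and $B\subseteq G$, I would realize $B$ as a return set: let $X\subseteq\{0,1\}^G$ be the orbit closure of the point $\omega_B$, $\omega_B(g)=\chi_{B^{-1}}(g)$, under the shift $(s\cdot\omega)(g)=\omega(s^{-1}g)$; let $\mathcal B=\{\omega\in X:\omega(e)=1\}$, a clopen set; and let $\psi\colon G\to X$, $\psi(g)=g\cdot\omega_B$, which is equivariant with $B=\psi^{-1}(\mathcal B)$. Put $\nu(f)=\lambda\big(g\mapsto f(g\cdot\omega_B)\big)$ for $f\in C(X)$. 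One checks directly from $p\ast\lambda=\lambda$ that $\nu$ is a $p$-stationary Radon probability measure, that $\nu(\mathcal B)=\lambda(B)$, and that $\lambda(\psi^{-1}(\mathcal E))\ge\nu(\mathcal E)$ for every open $\mathcal E\subseteq X$ (with equality for clopen $\mathcal E$). Then $h(g):=\nu(g^{-1}\mathcal B)=\lambda(g^{-1}B)$ is a bounded nonnegative \emph{right} $p$-harmonic function with $h(e)=\lambda(B)$, and likewise each $h_f(g)=\int_X f(g\cdot\omega)\,d\nu(\omega)$ is right $p$-harmonic. Since $A$ is $\cF_p$-large there is $\lambda_0\in\cF_p$ with $\lambda_0(A)=:\alpha>0$, and the defining property of $\cF_p$ gives $\lambda_0(h_f)=h_f(e)=\int_X f\,d\nu$; in particular $\lambda_0(h)=\lambda(B)$. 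Fixing the left/right conventions correctly here is exactly the point mishandled in \cite{BF3}: the object that $\lambda_0\in\cF_p$ may be applied to is a \emph{right} $p$-harmonic function.

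\smallskip

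\emph{Reduction.} Using equivariance of $\psi$ one computes, for all finite $F,L\subseteq G$,
\[
\bigcap_{l\in L}lA^{-1}FB=\psi^{-1}\Big(\bigcap_{l\in L}l\cdot\mathcal D_F\Big),\qquad
\mathcal D_F:=\bigcup_{a\in A,\,x\in F}a^{-1}x\cdot\mathcal B ,
\]
and $\mathcal D_F$ is open. Since $\lambda(\psi^{-1}(\cdot))\ge\nu(\cdot)$ on open sets and $\nu\big(\bigcap_{l\in L}l\mathcal D_F\big)$ decreases, as $L$ exhausts $G$, to $\nu\big(\bigcap_{g\in G}g\mathcal D_F\big)$, Theorem~\ref{maingen} reduces to: produce a finite $F$ with $\nu\big(\bigcap_{g\in G}g\mathcal D_F\big)\ge\nu(\mathcal B)$. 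Writing $R_\omega=\{g\in G:g\cdot\omega\in\mathcal B\}$, one has $\bigcap_{g\in G}g\mathcal D_F=\{\omega: G\cdot\omega\subseteq\mathcal D_F\}=\{\omega:A^{-1}FR_\omega=G\}$, a $G$-invariant Borel set; so the goal is to find a finite $F\subseteq G$ such that, for $\nu$-almost every $\omega$ in a set of measure at least $\nu(\mathcal B)$, the return set $R_\omega$ meets every right translate of $F^{-1}A$.

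\smallskip

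\emph{The crux: extracting $F$.} This is where the hypotheses on $A$ and the stationarity of $\nu$ enter, and it is the main obstacle. I would disintegrate $\nu$ over the Poisson boundary $(\Pi,\mathbf m)$ of $(G,p)$, $\nu=\int_\Pi\nu_\xi\,d\mathbf m(\xi)$ with $s\cdot\nu_\xi=\nu_{s\xi}$; along the $p$-random walk $(w_n)$ the bounded martingales $h_f(w_n)$ converge a.s. to $\int f\,d\nu_\xi$, so the right $p$-harmonic functions $h_\xi(g):=\nu_\xi(g^{-1}\mathcal B)$ control the density of the return sets of $\nu_\xi$-typical points. Combining $\lambda_0(h_\xi)=h_\xi(e)=\nu_\xi(\mathcal B)$ (the $\cF_p$-identity, valid because $h_\xi$ is right $p$-harmonic) with $\lambda_0(A)=\alpha>0$ — which, realizing $\lambda_0$ as a Ces\`aro cluster point of $(p^{\ast k})$, means the walk spends positive density of time in $A$ at infinitely many scales — one extracts a single finite $F=F(A,X,\nu,\mathcal B)$ such that for $\mathbf m$-a.e. $\xi$ and $\nu_\xi$-a.e. $\omega$ the set $R_\omega$ meets every right translate of $F^{-1}A$; a routine Fubini bookkeeping then yields $\nu\big(\{\omega:A^{-1}FR_\omega=G\}\big)\ge\nu(\mathcal B)$. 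The essential subtlety — and the precise source of the gap in \cite{BF3} — is that every harmonic function in this argument must be \emph{right} $p$-harmonic, which is what forces $F$ to sit \emph{between} $A^{-1}$ and the return times; moving $F$ to the left of $A^{-1}$ amounts to invoking $\lambda_0(\cdot)=(\cdot)(e)$ on a left $p$-harmonic function, which is then constant, and the argument collapses. (Lemma~\ref{Fpext} supplies the corrected extension property of $\cF_p$-means underlying this step.)

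\smallskip

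\emph{Conclusion and uniformity.} The first (exact) assertion is now the above applied to the model attached to the given $B$. For the second assertion, observe that the extraction in the crux, if one accepts a loss of $\eps>0$, uses only $\lambda_0(A)=\alpha$ and the abstract disintegration — not $\lambda$, $B$, or the particular system — so $F$ may be replaced by a finite $K=K(A,\eps)$ with $\nu\big(\{\omega:A^{-1}KR_\omega=G\}\big)\ge\nu(\mathcal B)-\eps$ for \emph{every} $p$-stationary $(X,\nu)$ and clopen $\mathcal B$; unwinding the correspondence gives $\lambda\big(\bigcap_{l\in L}lA^{-1}KB\big)\ge\lambda(B)-\eps$ for all $\lambda\in\cL_p$, all $B\subseteq G$ and all finite $L\subseteq G$. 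As indicated, the one genuinely hard point is the crux: showing that $\cF_p$-largeness of $A$ forces the return sets in an arbitrary $p$-stationary system to be "$F^{-1}A$-syndetic from the right" on a set of measure $\ge\nu(\mathcal B)$, with all uses of the Furstenberg-mean identity kept on the correct (right) side.
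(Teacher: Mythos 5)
Your correspondence step is sound and matches the paper's: the orbit-closure model, the push-forward $\nu=T^*\lambda$ landing in $\cP_p(X)$, the inequality $\lambda(\psi^{-1}(\cdot))\geq\nu(\cdot)$ on open sets, and the reduction to finding a finite $F$ with $\nu\big(\bigcap_{g\in G}g\,A^{-1}F\cB\big)\geq\nu(\cB)$ are all exactly how the paper proceeds (Bebutov triple, Corollary \ref{corlowbnd}, Theorem \ref{mainharm}). You have also correctly diagnosed the left/right issue that forces $F$ to sit between $A^{-1}$ and $B$. But the step you yourself label ``the crux'' is precisely the content of the theorem, and your paragraph there is an assertion, not an argument: ``combining $\lambda_0(h_\xi)=h_\xi(e)$ with $\lambda_0(A)=\alpha>0$ \dots one extracts a single finite $F$'' gives no mechanism. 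Note in particular that $\alpha$ may be minuscule while $\nu(\cB)$ is close to $1$, so no direct density/martingale count along the random walk can produce $F$; something must first \emph{amplify} $A$.

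The paper's extraction runs through three ingredients, none of which appear in your sketch. First, the ergodic decomposition of $\nu$ into \emph{ergodic $p$-stationary} measures (not the Poisson-boundary disintegration you propose, whose fibers $\nu_\xi$ need not be stationary at all), together with the elementary observation (Lemmas \ref{eps1}, \ref{eps2}) that $\kappa(B_\eps)\geq\nu(B)$ for some $\eps>0$ — this is also what yields the \emph{exact} inequality $\geq\lambda(B)$ in the first assertion, which your write-up does not account for. Second, the amplification Lemma \ref{lefterg}: for every $\eps>0$ there is a finite $F$ (depending only on $A$ and $\eps$, whence the uniformity of $K$) with $d^*_{\cF_p}(FA)>1-\eps$; its proof needs Lemma \ref{Fpext} ($\cF_p^{\textrm{ext}}\subset\cL_p^{\textrm{ext}}$, proved via the Poisson transform and Hewitt--Yosida — it is not an ``extension property'') so that an extremal $\cF_p$-mean maximizing $\lambda(A)$ pushes to an \emph{ergodic} stationary measure on the Bebutov system of $A$ (Lemmas \ref{onto}, \ref{leftergLp}). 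Third, the weak ergodic theorem for $\cF_p$-means on ergodic $(G,p)$-spaces (Lemma \ref{harmergthm}), which gives Lemma \ref{sumexceed_harm}: if $d^*_{\cF_p}(C)+\mu(B)>1$ with $\mu$ ergodic stationary, then $\mu(C^{-1}B)=1$; applied with $C=FA$ on each ergodic component in $B_\eps$ this produces the full-measure invariant set and the Fubini bookkeeping you allude to. Without these three steps — in particular without Lemma \ref{lefterg} and Lemma \ref{sumexceed_harm} or substitutes for them — the proposal does not prove the theorem.
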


One can prove Theorem \ref{dengen} from Theorem \ref{maingen} along the same lines as in the proof of 
Theorem \ref{denamen} above, assuming Theorem \ref{mainamen}. We leave the details to the reader.

\subsection{Action set phenomena}
\label{secact}
In Section \ref{prfmain} below we show how the main results in this subsection can be used to prove
Theorem \ref{mainamen} and Theorem \ref{maingen}. We shall use the same notation throughout the
paper. Let $(G,p)$ be a countable 
measured group. 
Suppose that $\overline{Y}$ is a compact and second countable space equipped with a homeomorphic 
action of $G$. Let $C(\overline{Y})$ denote the space of real-valued continuous functions on $\overline{Y}$
and note that $G$ acts on $C(\overline{Y})$ by $(s \cdot f)(y) = f(s^{-1} \cdot y)$. A Borel probability measure 
$\nu$ on $\overline{Y}$ is called \textbf{$p$-stationary} if
\[
\sum_{s} p(s) \nu(s^{-1} \cdot f) = \nu(f), \quad \textrm{for all $f \in C(\overline{Y})$},
\]
and \textbf{$G$-invariant} if $\nu(s^{-1} \cdot f) = \nu(f)$ for all $s \in G$ and $f \in C(\overline{Y})$. Clearly,
every $G$-invariant Borel probability measure is $p$-stationary, but the converse does not hold unless $(G,p)$
is Liouville. If $Y \subset \overline{Y}$ is a $G$-invariant Borel set with $\nu(Y) = 1$, then we shall view $\nu$
as a Borel probability measure on $Y$, and refer to $(Y,\nu)$ as a \textbf{$(G,p)$-space}. If $\nu$ is $G$-invariant,
we say that $(Y,\nu)$ is a \textbf{probability measure preserving} (or p.m.p.\ for short) \textbf{$G$-space}.
In both cases, we say that $(Y,\nu)$ is \textbf{ergodic} if a $G$-invariant Borel subset of $Y$ is either $\nu$-null 
or $\nu$-conull. Finally, if $A \subset G$ and $B \subset Y$ is a Borel set, we define the \textbf{action set} $AB$ as
\[
AB = \bigcup_{a \in A} aB = \big\{ a \cdot b \, : \, a \in A, \: b \in B \big\} \subset Y,
\]
which is again a Borel subset of $Y$. 

\subsubsection{Main result for p.m.p.\ $G$-spaces}

The following result is established in Section \ref{sec:prfsmainpmpandmainharm}.

\begin{theorem}
\label{mainpmp}
Let $(Y,\nu)$ be a p.m.p.\ $G$-space and let $A \subset G$ be $\cL_p$-large. For every Borel set 
$B \subset Y$, there exists a \emph{finite} set $F \subset G$ such that
\[
\nu\Big( \bigcap_{g \in G} g FAB \Big) \geq \nu(B).
\]
Furthermore, for every $\eps > 0$, one can find a finite set $K \subset G$, which only depends on
the set $A$, such that 
\[
\nu\Big( \bigcap_{g \in G} g KAB \Big) \geq \nu(B) - \eps.
\]
\end{theorem}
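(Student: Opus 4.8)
The plan is to exploit $\cL_p$-largeness of $A$ to produce a single element $a_0 \in A$ and a finite "correcting" set that pushes a large fraction of $B$ back into itself, and then to iterate. Concretely, since $A$ is $\cL_p$-large there is a $p$-stationary mean $\lambda \in \cL_p$ with $\lambda(A) = d^*_{\cL_p}(A) =: c > 0$. Fix a Borel set $B \subset Y$ with $\nu(B) > 0$ (the case $\nu(B)=0$ being trivial). The key device is the "return-time" function on $G$: for $y \in Y$ consider $\varphi(y) = \sum_{s} \chi_{sB}(y)\,\mu_y(s)$ for an appropriate measure, or more cleanly, apply the mean $\lambda$ in the $G$-variable to the function $g \mapsto \chi_{gB}(y) = \chi_B(g^{-1}\cdot y)$ and integrate against $\nu$. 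Because $\lambda$ is $p$-stationary and $\nu$ is $G$-invariant, Fubini (using that $\ell^\infty$-means pair with bounded functions) gives $\int_Y \lambda\big(g \mapsto \chi_B(g^{-1}y)\big)\,d\nu(y) = \nu(B)$. This is the mechanism that converts "$A$ is $\lambda$-large" into a statement about how often translates $aB$ overlap $B$.

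The main steps, in order, are as follows. \emph{Step 1.} Show that for any Borel $C \subset Y$ with $\nu(C) > 0$ there exists $a \in A$ with $\nu(aC \cap C) \geq \big(\nu(C)^2/?\big)$ — more precisely, a Cauchy–Schwarz / pigeonhole argument applied to the identity above, restricted to $g \in A$ and weighted by $\lambda$, yields $\sup_{a \in A} \nu(aC \cap C) \geq \nu(C)^2$ (or a comparable bound with a constant depending on $c$). This is the probabilistic heart. \emph{Step 2.} Bootstrap Step 1 to find, for the given $B$ and any target $\nu(B) - \eps$, a \emph{finite} set $F \subset A^{-1}$ (equivalently a finite $F \subset G$ with the roles arranged as in the statement $FAB$) such that $\nu\big(\bigcup_{f \in F} fAB\big) \geq \nu(B) - \eps$, by a greedy exhaustion: as long as the current union $U$ satisfies $\nu(U) < \nu(B) - \eps$, Step 1 applied to $C = B \setminus (\text{pullback of } U)$ produces a new element enlarging the union by a definite amount, so the process terminates. \emph{Step 3.} Upgrade "the union $FAB$ has measure $\geq \nu(B)$" to "$\bigcap_{g \in G} g FAB$ has measure $\geq \nu(B)$". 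Here one replaces $F$ by a larger finite set and uses that $FAB \supset FA B$ is already $G$-translation-rich: since $A$ is $\cL_p$-large, $AA^{-1}$-type fattening (cf. the mechanism behind Theorem \ref{piecewiseFol}) lets one absorb arbitrary left translates $g$ into the finite prefix. Concretely, if $FAB$ has near-full measure then for the fattened set $F' = F \cup \{e\} \cup \ldots$ one checks $g F' A B \supset $ a set of the same measure for all $g$, using invariance of $\nu$ and that a co-$\eps$ set's $G$-orbit intersection is still co-$(\text{something})$; taking $\eps \to 0$ along the second statement and using inner regularity gives the first (exact) statement.

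The step I expect to be the genuine obstacle is \emph{Step 3} — passing from "$\bigcup_{f\in F} fAB$ is large" to "$\bigcap_{g \in G} gFAB$ is large" for the \emph{same} finite $F$. A union having measure $\geq \nu(B) - \eps$ does not automatically make all of its left $G$-translates overlap heavily; one must use that $A$ itself is $\cL_p$-large (not merely that $FAB$ is large) to arrange that $FA$ already "spreads out" enough that left-multiplying by any $g$ costs nothing. I would handle this by first proving the $\eps$-version (second display) cleanly via Steps 1–2 applied to $KA$ in place of $A$ where $K$ is a fixed finite set coming from $\cL_p$-largeness of $A$ witnessing $AA^{-1} \supset$ (a fattening that makes $KA$ left-thick-like), and then deducing the exact version (first display) by a compactness/limiting argument letting $\eps \to 0$ and extracting a finite $F$ that works exactly — using that $\nu(\bigcap_{g} gFAB)$ is monotone in $F$ and bounded by $\nu(B)$, so the supremum over finite $F$ is attained or approached and equals $\nu(B)$.
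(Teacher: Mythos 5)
There is a genuine gap, and it occurs already in your Step 1. The claim that for every Borel $C$ with $\nu(C)>0$ one can find $a\in A$ with $\nu(aC\cap C)\geq \nu(C)^2$ (or any positive bound) is false for a general $\cL_p$-large set $A$. Take $G=\bZ$ with $p$ uniform on $\{\pm1\}$, $A=4\bZ+2$ (density $1/4$), $Y=\bZ/4\bZ$ with normalized counting measure and $C=\{0\}$: then $aC=\{2\}$ for every $a\in A$, so $\nu(aC\cap C)=0$. The mean identity you invoke, restricted to $A$, only yields
\[
\nu(C)^2 \leq \lambda(A)\,\sup_{a\in A}\nu(aC\cap C) + (1-\lambda(A))\,\nu(C),
\]
which is vacuous unless $\lambda(A)+\nu(C)>1$. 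This is precisely why the paper \emph{first} fattens $A$: Lemma \ref{lemmaleftergLp} produces a finite $F$ with $d^*_{\cL_p}(FA)>1-\eps$ (via extremal stationary means, the Bebutov system of $A$, and ergodicity of the pushed-forward measure), and only then does the ``sum exceeds one'' mechanism (Lemma \ref{sumexceed_pmp}, which rests on the weak ergodic theorem $\int_G\nu(gB\cap C)\,d\eta(g)=\nu(B)\nu(C)$ for \emph{ergodic} systems) force $FAB$ to be conull. Your greedy exhaustion in Step 2 therefore has no engine to run on.

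Your diagnosis that Step 3 is the obstacle is correct, but the fix you sketch cannot work: a set $E$ of measure $\geq 1-\eps$ can have $\bigcap_{g\in G}gE$ empty (e.g.\ the complement of a small interval under an irrational rotation), so no amount of post hoc fattening of $F$ or letting $\eps\to 0$ recovers the intersection over all of $G$ — note also that the finite sets witnessing smaller $\eps$ grow, so there is no single $F$ to extract in the limit. The only way $\bigcap_{g\in G}gFAB$ retains measure is for $FAB$ to be literally conull, and since $B$ may sit inside a single ergodic component this can only hold componentwise. The paper's route, which you are missing entirely, is the ergodic decomposition $\nu=\int\mu\,d\kappa$: choose $\eps$ with $\kappa(\{\mu:\mu(B)\geq\eps\})\geq\nu(B)$ (Lemma \ref{eps1}; this is also where the \emph{exact} bound $\geq\nu(B)$ comes from, not from a limiting argument), take $F$ with $d^*_{\cL_p}(FA)>1-\eps$, conclude $\mu(FAB)=1$ on each such component, observe that a countable intersection of conull sets is conull under a measure-preserving action, and integrate over $\kappa$.
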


\begin{remark}
We stress that we do allow $G$ to be non-amenable. 
\end{remark}

\subsubsection{Main result for $(G,p)$-spaces}

The following result is established in Section \ref{sec:prfsmainpmpandmainharm}.

\begin{theorem}
\label{mainharm}
Let $(Y,\nu)$ be a $(G,p)$-space and let $A \subset G$ be $\cF_p$-large. For every Borel set
$B \subset Y$, there exists a \emph{finite} set $F \subset G$ such that
\[
\nu\Big( \bigcap_{g \in G} g A^{-1}FB \Big) \geq \nu(B).
\]
Furthermore, for every $\eps > 0$, one can find a finite set $K \subset G$, which only depends on
the set $A$, such that 
\[
\nu\Big( \bigcap_{g \in G} g A^{-1}KB \Big) \geq \nu(B) - \eps. 
\]
\end{theorem}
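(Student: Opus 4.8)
### Proof plan for Theorem \ref{mainharm}

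The plan is to mimic the structure that must underlie Theorem \ref{mainpmp}, but to replace the invariance of $\nu$ by stationarity and to exploit the defining property \eqref{defFp} of $\cF_p$-largeness. First I would encode the set $A$ spectrally: since $A$ is $\cF_p$-large, there is a Furstenberg mean $\lambda \in \cF_p$ with $\lambda(A) = c > 0$. The idea is to transport $\lambda$ into the dynamical system $(Y,\nu)$ by building a suitable harmonic object. Concretely, for a Borel set $B \subset Y$ consider the function on $G$ given by $g \mapsto \nu(g^{-1} \cdot \chi_B) = \nu(gB)$; more usefully, fix a point $y$ in a generic fibre and consider $\phi_{B}(g) = \chi_B(g \cdot y)$, or work with the harmonic extension $h(g) = \int_Y \chi_B(g\cdot y)\, d\nu_g(y)$ where $\nu_g$ is the appropriate translate. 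The key point is that stationarity of $\nu$ says exactly that $\nu(\chi_B) = \sum_s p(s)\, \nu(s^{-1}\cdot \chi_B)$, so iterating, the function $g \mapsto \nu(g \cdot \chi_B)$ — suitably interpreted via the Markov operator associated to $p$ — is $p$-harmonic on the right. Then the defining property of a Furstenberg mean, $\lambda(f) = f(e)$ for $f \in H^\infty_r(G,p)$, pins the value at the identity, which is $\nu(B)$.

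The second step is the combinatorial heart: I want to find a finite $F$ so that $\bigcap_{g\in G} gA^{-1}FB$ captures mass at least $\nu(B)$. Here is where the placement of $F$ between $A^{-1}$ and $B$ is forced. Write $\chi_{A^{-1}FB}(y) = \sup_{a \in A}\sup_{f\in F} \chi_{FB}(\cdots)$ — more precisely $y \in A^{-1}FB$ iff $ay \in FB$ for some $a \in A$, i.e. iff $\sum_{a} \chi_A(a)\chi_{FB}(a\cdot y) > 0$. The plan is to consider, for a fixed generic $y$, the mean $\mu_y$ on $G$ defined by $\mu_y(\psi) = \lambda\big(a \mapsto \psi \text{ evaluated along the orbit}\big)$; pushing $\lambda$ forward along the orbit map $a \mapsto a\cdot y$ gives a mean on $Y$ which, by the harmonicity/Furstenberg property, equals $\delta_y$ or more precisely has $B$-mass controlled by $\nu(B)$ after integrating in $y$. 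Then a Følner/averaging argument over the finite set $F$ — exactly as in the amenable case, but now applied to the $\cF_p$-mean rather than to a left-invariant mean — lets one extract a finite $F$ with $\lambda(A^{-1}\cdot(f^{-1}\cdot(\text{stuff}))) $ large for all but $\eps$ of the relevant translates. The uniform version (finding $K$ depending only on $A$, with the $\eps$ loss) comes from running the same averaging argument with a quantitative Følner-type estimate inside $\cF_p$: one chooses $K = F_N \cap (\text{something})$ where $F_N$ is large enough that $\lambda$-averages over $K$-translates of a harmonic function vary by less than $\eps$.

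The third step is assembling the $G$-intersection $\bigcap_{g\in G} g(\cdot)$. This is where $(G,p)$-spaces differ from p.m.p.\ spaces: $\nu$ is not $G$-invariant, so $\nu(gC)$ can be much smaller than $\nu(C)$. The trick will be that the set $C = A^{-1}FB$ is itself built from $A^{-1}$ applied to something, and $A$ is $\cF_p$-large, so $A^{-1}C$-type sets inherit a harmonicity that makes $g\mapsto \nu(g^{-1}\cdot \chi_C)$ constant-like (bounded below uniformly) precisely because the relevant harmonic function is evaluated at $e$ by the Furstenberg mean. So I would argue: the function $g \mapsto \nu(g^{-1}\cdot \chi_{A^{-1}FB})$ dominates a $p$-harmonic function $h$ with $h(e) \geq \lambda(FB\text{-pullback}) \geq \nu(B)$ (respectively $\geq \nu(B)-\eps$), and since a bounded $p$-harmonic function that is $\geq \nu(B)$ at $e$ and comes from a Furstenberg mean is $\geq \nu(B)$ everywhere — no, that last claim is false in general, so instead one takes $F$ to already contain enough translates that $A^{-1}FB \supseteq g^{-1}(A^{-1}F'B)$ for all $g$ in a generating set, a standard "thickening by a generating set" move that works because $F$ is allowed to depend on $B$. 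I expect the main obstacle to be exactly this last point: ensuring the intersection over all $g \in G$ (not just a finite set) retains mass $\nu(B)$, which in the invariant case is immediate from $G$-invariance but here requires that the harmonic function controlling $\nu(gC)$ be genuinely constant on the orbit — and that constancy is precisely what the Furstenberg-mean property \eqref{defFp}, combined with choosing $F$ to swallow a symmetric generating set of $G$, is designed to deliver. Getting the left/right bookkeeping right here (which is exactly where \cite{BF3} erred) is the delicate part, and Lemma \ref{Fpext} will be the tool that makes it go through.
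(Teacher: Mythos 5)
Your proposal assembles several of the right ingredients (Furstenberg means evaluated on right $p$-harmonic functions, Lemma \ref{Fpext}, awareness that the intersection over all of $G$ is the crux), but it is missing the central mechanism and proposes a fix for the $\bigcap_{g\in G}$ step that cannot work. The paper's proof runs: (i) decompose $\nu$ into ergodic $p$-stationary components and keep those $\mu$ with $\mu(B)\geq\eps$, which carry $\kappa$-mass $\geq\nu(B)$ (resp.\ $\geq\nu(B)-\eps$); (ii) \emph{expand} $A$: find a finite $F$ with $d^*_{\cF_p}(FA)>1-\eps$ --- this is Lemma \ref{lefterg}, and it is where Lemma \ref{Fpext} is really used, namely to know that an extreme point of $\cF_p$ is extreme in $\cL_p$, hence pushes forward under the Bebutov map of $A$ to an \emph{ergodic} stationary measure, whose ergodicity yields the finite $F$; (iii) a weak ergodic theorem for Furstenberg means, $\int_G\mu(g^{-1}B\cap C)\,d\eta(g)=\mu(B)\mu(C)$, which upgrades $d^*_{\cF_p}(FA)+\mu(B)>1$ to the \emph{full-measure} statement $\mu\big((FA)^{-1}B\big)=\mu(A^{-1}F^{-1}B)=1$. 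Your sketch never produces any statement of the form ``$\mu(A^{-1}FB)=1$ on ergodic components''; the averaging/F\o lner-type argument you gesture at in Step 2 stops at ``$\lambda(\cdots)$ large'', which is not enough to survive the intersection over $G$.

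The second, more decisive gap is your Step 3. You propose to handle $\bigcap_{g\in G}gA^{-1}FB$ by enlarging $F$ so that $A^{-1}FB\supseteq g^{-1}A^{-1}F'B$ for $g$ in a generating set. This fails for two reasons: $g$ multiplies on the \emph{left} of $A^{-1}$ while $F$ sits to its \emph{right}, so no enlargement of $F$ can absorb the left translation; and even if it could, iterating over words in the generators would force $F$ to be infinite, since the intersection is over the whole infinite group. The correct (and much softer) resolution is that once $\mu(A^{-1}FB)=1$, quasi-invariance of the stationary measure $\mu$ (immediate from admissibility of $p$) gives $\mu(gA^{-1}FB)=1$ for \emph{every} $g\in G$, and a countable intersection of conull sets is conull; integrating over the ergodic components in $B_\eps$ then finishes the proof. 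You explicitly flag this step as ``the main obstacle,'' and your proposed way around it does not go through, so the proposal as written has a genuine gap rather than being a complete alternative argument.
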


\begin{remark}
We stress that in both theorems above, the set $K$ does \emph{not} depend on the $G$-space $(Y,\nu)$
at hand.
\end{remark}

\section{Proofs of Theorem \ref{mainamen} and Theorem \ref{maingen}}
\label{prfmain}

Assuming Theorem \ref{mainpmp} and Theorem \ref{mainharm} we shall in this section prove Theorem 
\ref{mainamen} and Theorem \ref{maingen}. For the proofs, as well as for other purposes later in the text,
we shall need the notion of a \emph{Bebutov triple} associated to a subset of a countable group.

\subsection{Bebutov triples}
Let $G$ be a countable group and let $2^G$ denote the space of all subsets of $G$ endowed with the 
Tychonoff topology, which makes it into a compact and second countable space. It comes equipped with the
homeomorphic $G$-action given by $g \cdot A = Ag^{-1}$ for all $g \in G$ and $A \subset G$. Define 
the \emph{clopen} set $U \subset 2^G$ by
\[
U = \big\{ A \in 2^G \, : \, e \in A \big\},
\]
where $e$ denotes the identity element in $G$. We note that
\[
U_A := \big\{ g \in G \, : \, g \cdot A \in U \big\} = A, \quad \textrm{for all $A \subset G$}.
\]
Fix a subset $A \subset G$ and let $X$ denote the closure in $2^G$ of the $G$-orbit of $A$. Then
$X$ is again a compact and second countable space equipped with a homeomorphic $G$-action 
and, by construction, $x_o := A$ has a dense $G$-orbit in $X$. We abuse notation and define 
the clopen subset $A := U \cap X \subset X$, so that
\[
A_{x_o} = \big\{ g \in G \, : \, g \cdot x_o \in A \big\} = A,
\]
where the symbol $A$ within the parenthesis is viewed as a clopen subset of $X$, while the symbol $A$ 
on the right hand side is viewed as a subset of $G$. We refer to $(X,x_o,A)$ as the \textbf{Bebutov triple of $A$}, 
and once it has been introduced for a given set $A \subset G$, we shall write $A_{x_o}$ to denote this set, and
we write $A$ for the clopen set in $X$.

\subsection{Compact pointed $G$-spaces}

It will be useful to adopt a more general point of view on Bebutov triples. Suppose that $X$ is a compact
and second countable $G$-space equipped with a homeomorphic action of $G$, and suppose that $x_o \in X$
has a dense $G$-orbit. We shall refer to $(X,x_o)$ as a \textbf{compact pointed $G$-space}. Given a subset $A \subset X$ 
and $x \in X$, we write
\[
A_x = \big\{ g \in G \, : \, g \cdot x \in A \big\}.
\]
Define the positive and unital linear map $T : C(X) \ra \ell^\infty(G)$ by
\begin{equation}
\label{defBebutov}
(T\phi)(g) = \phi(g \cdot x_o), \quad \textrm{for $\phi \in C(X)$}.
\end{equation}
We shall refer to $T$ as the \textbf{Bebutov map} of $(X,x_o)$. Note that $G$ acts on both $C(X)$ and $\ell^\infty(G)$ by
\[
(s \cdot \phi)(x) = \phi(s^{-1} \cdot x) \qand (s \cdot f)(g) = f(s^{-1}g), \quad \textrm{for $\phi \in C(X)$ and $f \in \ell^\infty(G)$},
\]
and one readily checks that $T$ is $G$-equivariant, i.e. $s \cdot T\phi = T(s \cdot \phi)$ for all $s \in G$. Let 
$\cP(X)$ denote the space of Borel probability measures on $X$, which is a weak*-compact and convex subset
of the dual space $C(X)^*$, and is equipped with the affine and weak*-homeomorphic $G$-action given by
\[
(s \cdot \mu)(\phi) = \mu(s^{-1} \cdot \phi), \quad \textrm{for $\phi \in C(X)$}.
\]
Since $T$ is positive and unital, we have $T^*\cM \subset \cP(X)$, where $\cM$ denotes the set of means on $G$. 
Furthermore, if $G$ is \emph{amenable}, then we have $T^*\cL_G \subset \cP_G(X)$, where
\[
\cP_G(X) = \big\{ \mu \in \cP(X) \, : \, s \cdot \mu = \mu, \enskip \textrm{for all $s \in G$} \big\}.
\]
We stress that $\cP_G(X)$ might be empty if $G$ is not amenable. More generally, given an admissible probability
measure $p$ on $G$, we define
\[
\cP_p(X) = \big\{ \mu \in \cP(X) \, : \,\sum_{s \in G} p(s) (s^{-1} \cdot \mu) = \mu, \enskip \textrm{for all $s \in G$} \big\}.
\]
A straightforward application of Kakutani's fixed point theorem shows that $\cP_p(X)$ is always non-empty. We refer
to the elements in $\cP_p(X)$ as \textbf{$p$-stationary} (or \textbf{$p$-harmonic}) Borel probability measures on $X$.
Since $T$ is $G$-equivariant, we have $T^*\cL_p \subset \cP_p(X)$, where $\cL_p \subset \cM$ denotes the set of 
$p$-stationary means on $G$ defined in \eqref{defLp}. \\

Suppose that $\eta \in \cM$ and define $\nu = T^*\eta \in \cP(X)$. Then, for every \emph{clopen} set 
$U \subset X$, its indicator function $\chi_U$ belong to $C(X)$ and thus
\[
\eta(U_{x_o}) = \eta(T\chi_U) = \nu(\chi_U) = \nu(U).
\]
If $U$ is only assumed to be open, this identity turns into a lower bound, as the following lemma shows.
\begin{lemma}
\label{open}
For every open set $U \subset X$ and $\eta \in \cM$, we have $\eta(U_{x_o}) \geq \nu(U)$, where 
$\nu = T^*\eta$.
\end{lemma}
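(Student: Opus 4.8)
The plan is to exploit the only non-trivial tool available, namely that $\eta$ (being a positive unital functional on $\ell^\infty(G)$) is monotone, together with the fact that although $\chi_U$ is generally \emph{not} continuous, it can be approximated from below by elements of $C(X)$. The point to keep in mind is that the Bebutov map $T$ is only defined on $C(X)$, so we cannot feed $\chi_U$ directly into $T$ when $U$ is merely open; this is precisely why one obtains an inequality rather than the equality valid in the clopen case.

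First I would record the approximation statement: since $X$ is compact and second countable, hence metrizable, any Borel probability measure $\nu$ on $X$ is regular, and $U$ is an $F_\sigma$ set. Writing $U = \bigcup_n K_n$ with $K_n \subset K_{n+1}$ compact, Urysohn's lemma supplies $\phi_n \in C(X)$ with $\chi_{K_n} \leq \phi_n \leq \chi_U$, so that $\nu(\phi_n) \geq \nu(K_n) \to \nu(U)$ by continuity from below. Consequently
\[
\nu(U) = \sup\big\{ \nu(\phi) \, : \, \phi \in C(X), \ 0 \leq \phi \leq \chi_U \big\}.
\]

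Next, fix such a $\phi \in C(X)$ with $0 \leq \phi \leq \chi_U$. For every $g \in G$ we have $(T\phi)(g) = \phi(g \cdot x_o) \leq \chi_U(g \cdot x_o) = \chi_{U_{x_o}}(g)$, so $T\phi \leq \chi_{U_{x_o}}$ as bounded functions on $G$. Since $\eta$ is a mean, it is monotone, whence
\[
\nu(\phi) = (T^*\eta)(\phi) = \eta(T\phi) \leq \eta(\chi_{U_{x_o}}) = \eta(U_{x_o}).
\]
Finally, taking the supremum over all admissible $\phi$ and invoking the displayed identity for $\nu(U)$ gives $\nu(U) \leq \eta(U_{x_o})$, as claimed.

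There is no real obstacle here; the only thing to be careful about is the direction of the approximation (continuous functions \emph{below} $\chi_U$, matching the open set $U$), and the observation that this one-sided approximation is exactly what degrades the clopen equality $\eta(U_{x_o}) = \nu(U)$ into the lower bound asserted in the lemma.
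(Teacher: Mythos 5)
Your proof is correct and follows essentially the same route as the paper's: approximate $\chi_U$ from below by continuous functions, apply monotonicity of the mean $\eta$ to $T\phi \leq \chi_{U_{x_o}}$, and pass to the limit using the $\sigma$-additivity (regularity) of $\nu$. The paper phrases the approximation as an increasing sequence $(f_n) \subset C(X)$ with $\chi_U = \sup_n f_n$ and invokes monotone convergence, while you justify it via the $F_\sigma$ structure of $U$ and Urysohn's lemma and take a supremum instead — a cosmetic difference only.
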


\begin{proof}
Fix $\eta \in \cM$ and an open set $U \subset X$. Since $X$ is second countable, we can find an increasing
sequence $(f_n)$ in $C(X)$ such that $\chi_U = \sup_n f_n$. Hence, with $\nu = T^*\eta \in \cP(X)$, we have
\[
\eta(U_{x_o}) = \eta(\sup_n Tf_n) \geq \eta(Tf_m) = \nu(f_m), \quad \textrm{for every $m$}.
\]
By monotone convergence (note that $\nu$ is $\sigma$-additive), this inequality is preserved upon taking the limit $m \ra \infty$.
\end{proof}

Note that if $A, L \subset G$ and $B \subset X$, then
\[
\Big(\bigcap_{l \in L} l AB\Big)_x = \bigcap_{l \in L} l AB_x, \quad \textrm{for all $x \in X$}.
\]
Hence, by specializing the lemma above to sets which are finite intersections of countable unions of 
clopen sets, we conclude:

\begin{corollary}
\label{corlowbnd}
For every $A \subset G$, clopen set $B \subset X$ and $\eta \in \cM$, we have
\[
\eta(B_{x_o}) = \nu(B) \qand \eta\Big( \bigcap_{l \in L} l AB_{x_o} \Big) \geq \nu\Big( \bigcap_{l \in L} l AB \Big), \quad \textrm{for every \emph{finite} set $L \subset G$},
\]
where $\nu = T^*\eta$.
\end{corollary}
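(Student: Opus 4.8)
The plan is to deduce both assertions directly from Lemma \ref{open}, the only real content being to check that the relevant subsets of $X$ are open and to keep careful track of the distinction between action sets in $X$ and product sets in $G$.

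For the first assertion, $\eta(B_{x_o}) = \nu(B)$ is nothing but the displayed identity established just before Lemma \ref{open}: since $B$ is clopen, $\chi_B \in C(X)$ and $\chi_{B_{x_o}} = T\chi_B$ by the definition of the Bebutov map, so $\eta(B_{x_o}) = \eta(T\chi_B) = (T^*\eta)(\chi_B) = \nu(B)$. (If one prefers a derivation through Lemma \ref{open} alone, one can apply it both to the open set $B$ and to the open set $X \setminus B$, observe that $(X\setminus B)_{x_o} = G \setminus B_{x_o}$, and add the two resulting inequalities.)

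For the second assertion, fix a finite set $L \subset G$ and set $U = \bigcap_{l \in L} l(AB) \subset X$, where $AB = \bigcup_{a \in A} a \cdot B$ denotes the action set of $A$ on the clopen set $B$. Each $a \cdot B$ is clopen, being the image of $B$ under the homeomorphism $x \mapsto a \cdot x$ of $X$; hence $AB$ is a union of clopen sets and so is open, each translate $l(AB)$ is open, and $U$ — a finite intersection of open sets — is open. Lemma \ref{open} therefore applies and yields $\eta(U_{x_o}) \geq \nu(U)$.

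Finally I would identify $U_{x_o}$ with a product set in $G$. Using the identity $\big(\bigcap_{l \in L} l AB\big)_x = \bigcap_{l \in L} l A B_x$ displayed before the statement, evaluated at $x = x_o$ (so that now $B_{x_o} \subset G$ and $l A B_{x_o}$ is the product set $\{lab : a \in A,\ b \in B_{x_o}\}$), we obtain $U_{x_o} = \bigcap_{l \in L} l A B_{x_o}$, whence
\[
\eta\Big( \bigcap_{l \in L} l A B_{x_o} \Big) = \eta(U_{x_o}) \;\geq\; \nu(U) = \nu\Big( \bigcap_{l \in L} l A B \Big).
\]
The only point demanding any attention is this bookkeeping — reading $AB$ as a subset of $X$ on one side and $A B_{x_o}$ as a subset of $G$ on the other — together with the routine verification that all translates and finite intersections in play stay open, so that Lemma \ref{open} is legitimately applicable. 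I do not expect any genuine obstacle beyond that.
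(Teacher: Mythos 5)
Your proof is correct and follows exactly the paper's route: the equality for clopen $B$ is the identity $\eta(T\chi_B)=\nu(B)$ noted just before Lemma \ref{open}, and the inequality comes from applying Lemma \ref{open} to the open set $\bigcap_{l\in L} l\,AB$ (a finite intersection of countable unions of clopen sets) together with the identity $\big(\bigcap_{l\in L} l\,AB\big)_{x_o}=\bigcap_{l\in L} l\,AB_{x_o}$. No discrepancies to report.
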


\subsection{Proof of Theorem \ref{mainamen} assuming Theorem \ref{mainpmp}}

Let $G$ be a countable amenable group and suppose that $A \subset G$ is $\cL_G$-large. We fix $\lambda \in \cL_G$
and a subset $B \subset G$. We abuse notation and write $(Y,y_o,B)$ for the Bebutov triple of the set $B$ in $G$,
which we henceforth denote by $B_{y_o}$. Let $T : C(Y) \ra \ell^\infty(G)$ be the linear map $T\phi(g) = \phi(g \cdot y_o)$. From our discussion above, we know that $\nu = T^*\lambda$ belongs to $\cP_G(Y)$, and by Theorem 
\ref{mainpmp} applied to the p.m.p.\ $G$-space $(Y,\nu)$, we can find a finite set $F \subset G$ such that 
\[
\nu\Big(\bigcap_{l \in L} l FAB\Big) \geq \nu(B), \quad \textrm{for every finite set $L \subset G$}.
\]
Since $B \subset Y$ is clopen, we conclude from Corollary \ref{corlowbnd} that
\[
\eta(B_{y_o}) = \nu(B) \qand \eta\Big(\bigcap_{l \in L} l FAB_{y_o}\Big) \geq \nu\Big(\bigcap_{l \in L} l FAB\Big),
\]
and thus
\[
\eta\Big(\bigcap_{l \in L} l FAB_{y_o}\Big) \geq \nu(B) = \eta(B_{y_o}),
\]
for every finite set $L \subset G$, which finishes the proof of first assertion in Theorem \ref{mainamen}. For the
second assertion, we note that Theorem \ref{mainpmp} also provides us with, for every $\eps > 0$, a finite set 
$K \subset G$, which only depends on the set $A$, but not on $B$ or the p.m.p.\ $G$-space at hand, such that
\[
\nu\Big(\bigcap_{l \in L} l KAB\Big) \geq \nu(B) - \eps, \quad \textrm{for every finite set $L \subset G$}.
\]
The same argument as above now yields:
\[
\eta\Big(\bigcap_{l \in L} l KAB_{y_o}\Big) \geq \nu(B) - \eps = \eta(B_{y_o}) - \eps,
\]
which finishes the proof of the second assertion.

\subsection{Proof of Theorem \ref{maingen} assuming Theorem \ref{mainharm}}

The proof of Theorem \ref{maingen} works almost verbatim the same as in the previous 
subsection (note that if $\lambda \in \cL_p$, then $T^*\lambda \in \cP_p(Y)$ and thus $(Y,\nu)$
is a $(G,p)$-space), with the important difference that one has to consider product/action sets of the 
form $A^{-1}FB$ and $A^{-1}KB$ instead of $FAB$ and $KAB$. In any case, Corollary \ref{corlowbnd}
works also in this case. We leave the details to the reader.


\section{Proofs of Theorem \ref{mainpmp} and Theorem \ref{mainharm}}
\label{sec:prfsmainpmpandmainharm}
The proofs of Theorem \ref{mainpmp} and Theorem \ref{mainharm} are very similar in spirit and will 
run in parallel. We begin by breaking down the proofs into three separate steps. The third step is a
rather minor one and will be discussed here, while the two first steps are more involved, and details are 
postponed to later sections.

\subsection{Step I: Expansion of action sets in ergodic $(G,p)$-spaces}

Let $(G,p)$ be a countable measured group. The definitions of p.m.p.\ $G$-spaces and $(G,p)$-spaces are given 
in Subsection \ref{secact} above. 

\begin{lemma}
\label{sumexceed_pmp}
Suppose that $A \subset G$ is $\cL_p$-large. For every \emph{ergodic} p.m.p.\ $G$-space $(Y,\nu)$ and Borel 
set $B \subset Y$ such that $d^*_{\cL_p}(A) + \nu(B) > 1$, we have $\nu(AB) = 1$.
\end{lemma}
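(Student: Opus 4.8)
The plan is to pass to the complement of $AB$, realise it as a fibre of a $G$-invariant set inside a $p$-stationary joining of $(Y,\nu)$ with a Bebutov system attached to $A$, and then exploit the ergodicity of $(Y,\nu)$.

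First I would record two elementary identities. Writing $B_{y}:=\{g\in G:g\cdot y\in B\}$ for the $G$-equivariant pattern of $B$ at $y$, one has $y\notin AB$ exactly when $A^{-1}\cap B_{y}=\varnothing$; thus $Y\setminus AB=\{y:B_{y}\cap A^{-1}=\varnothing\}$, and the goal is to show this set is $\nu$-null. Next, in view of the left/right subtleties flagged in the introduction, I would form the Bebutov triple $(X,x_{o},\mathbf{C})$ of $A^{-1}$, so that $\mathbf{C}\subseteq X$ is clopen with $\mathbf{C}_{x_{o}}=A^{-1}$ and $\mathbf{C}_{x}=x$ for $x\in X\subseteq 2^{G}$. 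Choosing $\lambda\in\cL_{p}$ and setting $\mu:=T^{*}\lambda\in\cP_{p}(X)$, Corollary~\ref{corlowbnd} gives $\mu(\mathbf{C})=\lambda(\mathbf{C}_{x_{o}})$, and I would take $\lambda$ so that $\mu(\mathbf{C})$ is as large as the hypothesis allows, i.e.\ $\mu(\mathbf{C})>1-\nu(B)$. Since $\nu$ is $G$-invariant, the product $\mu\times\nu$ is $p$-stationary on $X\times Y$ for the diagonal action.

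Inside $X\times Y$ I would consider the closed set $\cD:=\{(x,y):x\cap B_{y}=\varnothing\}$. Because $B_{hy}=B_{y}h^{-1}$ and $\mathbf{C}_{hx}=\mathbf{C}_{x}h^{-1}$, the set $\cD$ is $G$-invariant, and its fibre over the base point is $\cD_{x_{o}}=\{y:B_{y}\cap A^{-1}=\varnothing\}=Y\setminus AB$. Its complement $\cE:=(X\times Y)\setminus\cD$ has fibres $\cE_{x}=x^{-1}\cdot B$ (an action set in $Y$), so $\cE_{x_{o}}=AB$; and $x\mapsto\nu(\cE_{x})$ is $G$-invariant and lower semicontinuous (approximate $x$ by its finite subsets and use continuity from below of $\nu$), hence, being constant equal to $\nu(AB)$ on the dense $G$-orbit of $x_{o}$, it attains its maximum at $x_{o}$. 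Consequently
\[
\nu(AB)=\sup_{x\in X}\nu(\cE_{x})\ \ge\ \int_{X}\nu(\cE_{x})\,d\mu(x)=(\mu\times\nu)(\cE)=1-(\mu\times\nu)(\cD),
\]
so the whole statement reduces to proving $(\mu\times\nu)(\cD)=0$.

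The last step is where ergodicity of $(Y,\nu)$ is indispensable, and it is the part I expect to be the real work. Since $\cD$ is $G$-invariant and disjoint from $\mathbf{C}\times B$, it is disjoint from $G\cdot(\mathbf{C}\times B)$, whose projection to $Y$ is $GB$; as $\nu(B)>0$ and $(Y,\nu)$ is ergodic, $GB$ is $\nu$-conull, so for $\nu$-a.e.\ $y$ one gets $\cD^{y}\subseteq X\setminus\bigcup_{h:\,h^{-1}y\in B}h\cdot\mathbf{C}$, and a direct Bebutov computation gives $\mu(h\cdot\mathbf{C})=\lambda(hA^{-1})$, where $g\mapsto\lambda(gA^{-1})$ is a bounded $p$-harmonic function on $G$ with value $\mu(\mathbf{C})$ at $e$. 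It then remains to show that $\mu(\cD^{y})=0$ for $\nu$-a.e.\ $y$, i.e.\ that the ``visiting-time set'' $V_{y}:=\{h:h^{-1}y\in B\}$ meets $\mu$-almost every pattern $x$ in the support of $\mathbf{C}$. Here one feeds in the hypothesis $\mu(\mathbf{C})+\nu(B)>1$ (the content of $d^{*}_{\cL_{p}}(A)+\nu(B)>1$ for our choice of $\lambda$) together with the invariance and ergodicity of $\nu$: morally, the $\mu$-density of a pattern $x$ and the $\nu$-density $\nu(B)$ of $B_{y}$ sum to more than $1$, so in an ergodic environment $x$ and $B_{y}$ cannot be disjoint. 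Granting this, $(\mu\times\nu)(\cD)=0$, and the displayed inequality gives $\nu(AB)=1$. This final density/ergodicity estimate — relating the harmonic function $g\mapsto\lambda(gA^{-1})$ to the $\nu$-generic visiting-time sets of $B$ — is the main obstacle; the rest is bookkeeping with Bebutov maps and semicontinuity.
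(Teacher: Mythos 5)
There is a genuine gap, and it sits exactly where the content of the lemma lives. Your reduction (Bebutov system for $A^{-1}$, the $G$-invariant set $\cD=\{(x,y): x\cap B_y=\varnothing\}$ in $X\times Y$, and the semicontinuity argument giving $\nu(AB)\ge 1-(\mu\times\nu)(\cD)$) is plausible bookkeeping, but the step you defer --- ``in an ergodic environment, a pattern of density $>1-\nu(B)$ cannot be disjoint from $B_y$'' --- is not a consequence of ergodicity of $(Y,\nu)$ alone; it \emph{is} the lemma, transported to the product. The $G$-invariance of $\cD$ does not force $(\mu\times\nu)(\cD)=0$, because $\mu\times\nu$ need not be ergodic even when both factors are (and $\mu=T^*\lambda$ need not be ergodic at all for an arbitrary $\lambda\in\cL_p$ realizing the supremum); nor does ``$\mu$-density of a pattern'' make sense for $\mu$-a.e.\ $x$ without some ergodic theorem. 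The missing ingredient is precisely a mean ergodic theorem against the mean: the paper proves (Lemma \ref{harmergthm} and Scholium \ref{pmpergthm}) that for an ergodic space and a suitable mean $\eta$ one has $\int_G \nu(gB\cap C)\,d\eta(g)=\nu(B)\,\nu(C)$, and then the lemma follows in three lines by contradiction: with $C=(AB)^c$ non-null, the function $f(g)=\nu(gB\cap C)$ vanishes on $A$ and is bounded by $\nu(C)$, so $\nu(B)\nu(C)=\eta(f)=\eta(\chi_{A^c}f)\le(1-\eta(A))\nu(C)$, forcing $\eta(A)+\nu(B)\le 1$. No joining or Bebutov system is needed for this lemma; those enter the paper only later, when transferring from $G$-spaces back to subsets of $G$.

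A secondary but real defect: you arrange $\mu(\mathbf{C})=\lambda(\mathbf{C}_{x_o})=\lambda(A^{-1})$, whereas the hypothesis $d^*_{\cL_p}(A)+\nu(B)>1$ only supplies a stationary mean $\lambda$ with $\lambda(A)>1-\nu(B)$. For a general (non-symmetric) $p$-stationary mean there is no relation between $\lambda(A)$ and $\lambda(A^{-1})$ --- this is exactly the left/right pitfall the paper warns about --- so even the quantitative input $\mu(\mathbf{C})>1-\nu(B)$ to your scheme is not justified. (Also, $\cD$ is only Borel, not closed, since $B$ is merely Borel; this is harmless but worth noting.) To repair the argument you would need both to work with the correct side of $A$ and to supply the weak ergodic theorem; at that point the detour through the product system becomes unnecessary.
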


\begin{lemma}
\label{sumexceed_harm}
Suppose that $A \subset G$ is $\cF_p$-large. For every \emph{ergodic} $(G,p)$-space $(Y,\nu)$ and  
Borel set $B \subset Y$ such that $d^*_{\cF_p}(A) + \nu(B) > 1$, we have $\nu(A^{-1}B) = 1$.
\end{lemma}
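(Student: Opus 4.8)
The plan is to mimic the classical Steinhaus/ergodicity argument: if $\nu(A^{-1}B) < 1$, then the complement $Y \setminus A^{-1}B$ has positive measure, and $A^{-1}B$ is precisely the set of points $y$ for which some $a \in A$ sends $y$ into $B$, i.e. the set of $y$ with $(A^{-1})_y \cap \emptyset$... more usefully, $y \in A^{-1}B$ iff there is $a \in A$ with $a^{-1} \cdot y \in B$... so actually I want to work with the "return-time" style mean. First I would introduce, for the $\cF_p$-large set $A$, a Furstenberg mean $\lambda \in \cF_p$ realizing $d^*_{\cF_p}(A) = \lambda(A)$ (using weak\*-closedness of $\cF_p$ and continuity). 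The point of using $\cF_p$ rather than $\cL_p$ here is exactly the left/right subtlety flagged in the introduction: to push information through the action I need the relation $\sum_s p(s)\,\lambda(s^{-1}B) = \lambda(B)$ to interact correctly with $p$-stationarity of $\nu$, and the defining property $\lambda(f) = f(e)$ for left $p$-harmonic... wait — for \emph{right} $p$-harmonic functions $f * p = f$ — is what lets me evaluate the relevant harmonic function built from $\nu$ at the base point.

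Concretely, for a fixed point $y \in Y$ and the Borel set $B$, consider the function $g \mapsto \nu(g^{-1} B)$ on $G$; more precisely, disintegrate via the orbit map. The cleanest route: suppose $\nu(A^{-1}B) < 1$. Let $C = Y \setminus A^{-1}B$, so $\nu(C) > 0$. Observe that $a^{-1} \cdot C \subset Y \setminus B$ for every $a \in A$, equivalently $A^{-1}C \subset Y \setminus B$, hence $\nu(A^{-1}C) \le 1 - \nu(B)$. Now I want to lower-bound $\nu(A^{-1}C)$ by $d^*_{\cF_p}(A) + \nu(C) - 1 \ge d^*_{\cF_p}(A) - \nu(B) \cdot(\ldots)$... the bookkeeping should give $\nu(A^{-1}C) \ge d^*_{\cF_p}(A) + \nu(C) - 1$, and combined with $\nu(C) = 1 - \nu(A^{-1}B)$ this yields $1 - \nu(B) \ge d^*_{\cF_p}(A) - \nu(A^{-1}B)$, i.e. $\nu(A^{-1}B) \ge d^*_{\cF_p}(A) + \nu(B) - 1 > 0$; but that only gives positivity, not full measure. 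To upgrade to $\nu(A^{-1}B) = 1$ I invoke ergodicity: note $A^{-1}B$ is "almost invariant" in the sense that $\nu(s^{-1} \cdot A^{-1}B \,\triangle\, A^{-1}B)$ can be controlled because $A^{-1} = A^{-1}$ is a union over a set whose $\cF_p$-density is positive and the measure is $p$-stationary — so the saturation $\bigcup_{g \in G} g \cdot (A^{-1}B)$ is conull by ergodicity, and then a stationarity/maximality argument forces the original set to already be conull.

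The key technical step — and the main obstacle — is producing the lower bound $\nu(A^{-1}C) \ge d^*_{\cF_p}(A) + \nu(C) - 1$ correctly in the \emph{non-invariant} setting of a $(G,p)$-space, where $\nu$ is only $p$-stationary. For a p.m.p.\ space (Lemma \ref{sumexceed_pmp}) one uses $\int_Y \#\{a \in F : a^{-1}y \in C\}\,d\nu = \sum_{a \in F} \nu(C)$ along a Følner-type or mean-theoretic averaging over $A$. Here the analogous identity must be run through a Furstenberg mean: one builds the bounded function $F_C(g) = \nu(g^{-1} \cdot C)$ on $G$, checks it is \emph{right} $p$-harmonic (this uses $p$-stationarity of $\nu$: $\sum_s p(s) F_C(gs) = \sum_s p(s)\nu(s^{-1} g^{-1} C) = \nu(g^{-1}C) = F_C(g)$ — here I must be careful about which side the convolution acts, and this is precisely where \cite{BF3} erred), hence $\lambda(F_C) = F_C(e) = \nu(C)$ for $\lambda \in \cF_p$. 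Then $\lambda(A) + \nu(C) = \lambda(\chi_A) + \lambda(F_C) = \lambda(\chi_A + F_C)$, and on the set where $\chi_A(g) = 1$, i.e. $g \in A$, failure of $g^{-1}\cdot y_0$-type points to land outside... one concludes $\chi_A + F_C \le 1 + F_{A^{-1}C}$ pointwise, giving $\lambda(A) + \nu(C) \le 1 + \lambda(F_{A^{-1}C}) = 1 + \nu(A^{-1}C)$. Establishing that pointwise inequality and the harmonicity bookkeeping — getting every $A$ versus $A^{-1}$ and every left versus right convolution on the correct side — is the crux; the passage from positivity to full measure via ergodicity is then standard. The proof of Lemma \ref{sumexceed_pmp} is the same with $\cL_p$ in place of $\cF_p$, $A$ in place of $A^{-1}$, left-invariant means in place of Furstenberg means, and $G$-invariance of $\nu$ replacing $p$-stationarity, which removes all the harmonicity subtleties.
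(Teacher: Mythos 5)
Your proposal has the right flavour (argue by contradiction, use a Furstenberg mean, exploit that $g \mapsto \nu(g^{-1}\cdot\phi)$ is right $p$-harmonic so that $\lambda(f) = f(e)$ applies), but it has two genuine gaps, one of which is exactly the left/right trap you were trying to avoid.

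First, the containment you start from is wrong. With $C = (A^{-1}B)^c$, a point $y$ lies in $C$ iff $a\cdot y \notin B$ for \emph{all} $a \in A$, i.e.\ $AC \subset B^c$, equivalently $a^{-1}B \cap C = \emptyset$ for all $a \in A$. Your claim $A^{-1}C \subset Y\setminus B$ does not follow (it would need $a^{-1}\in A$). This matters: when you then try to lower-bound the measure of the relevant action set by integrating $F_C(g) = \nu(g^{-1}C)$ against $\lambda$, the pointwise inequality you need holds for $g$ with $g^{-1}C \subset AC$, i.e.\ for $g \in A^{-1}$, so your argument ends up requiring $\lambda(A^{-1})$ to be large rather than $\lambda(A)$ — which is not the hypothesis. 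The paper sidesteps this by choosing the correlation function $f(g) = \nu(g^{-1}B\cap C)$, which vanishes precisely on $A$ itself.

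Second, and more seriously, you never actually use ergodicity: your "upgrade" from $\nu(A^{-1}B) > 0$ to $\nu(A^{-1}B)=1$ via "the saturation is conull, and a stationarity/maximality argument forces the original set to be conull" is not an argument — conullity of $\bigcup_g g(A^{-1}B)$ says nothing about $A^{-1}B$. The paper's mechanism is a Weak Ergodic Theorem (Lemma \ref{harmergthm}): for $\eta \in \cF_p$ and \emph{ergodic} $p$-stationary $\nu$, one has $\int_G \nu(g^{-1}B\cap C)\,d\eta(g) = \nu(B)\,\nu(C)$; this is where extremality/ergodicity of $\nu$ enters (via the decomposition $\nu_Y = \nu(C)\nu_C + \nu(C^c)\nu_{C^c}$ and $\nu_Y=\nu$, which uses $\eta\in\cF_p$). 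Granting that, the proof is three lines: $f(g)=\nu(g^{-1}B\cap C)$ satisfies $f\le\nu(C)$, $f|_A=0$, so $\nu(B)\nu(C)=\eta(f)=\eta(\chi_{A^c}f)\le(1-\eta(A))\nu(C)$, forcing $\eta(A)+\nu(B)\le 1$ whenever $\nu(C)>0$ — full measure drops out directly, with no positivity-then-upgrade step. You would need to supply this ergodic-theoretic identity (or an equivalent) to close your argument.
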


\begin{remark}
As we shall see in Section \ref{sec:prfthmnotliou}, Lemma \ref{sumexceed_harm} fails miserably if the set 
$\cF_p$ of Furstenberg means is replaced with the set $\cL_p$ of \emph{all} $p$-stationary means 
(for non-Liouville measured groups). 
\end{remark}

\subsection{Step II: Ergodicity for $p$-stationary densities}

Recall the definitions of the sets $\cL_p$ and $\cF_p$ 
from \eqref{defLp} and \eqref{defFp} respectively. 

\begin{lemma}
\label{lemmaleftergLp}
Let $A \subset G$ be a $\cL_p$-large set. For every $\eps > 0$, there exists a finite set $F \subset G$ such that $d^*_{\cL_p}(FA) > 1 - \eps$.
\end{lemma}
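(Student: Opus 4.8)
The plan is to pass to the Bebutov triple of $A$ and reformulate the conclusion as a topological-dynamical statement about $p$-stationary measures, then exploit an ergodic decomposition together with a covering argument. First I would let $(X,x_o,A)$ be the Bebutov triple of $A$, so that $A_{x_o} = A$ as a clopen subset of $X$, and recall from Lemma \ref{open} and Corollary \ref{corlowbnd} that for any $\eta \in \cM$ one has $\eta(B_{x_o}) \geq \nu(B)$ whenever $B$ is open and $\nu = T^*\eta$, with equality for clopen $B$. In particular, since $A \subset G$ is $\cL_p$-large, there is $\lambda \in \cL_p$ with $\lambda(A) = d^*_{\cL_p}(A) =: \alpha > 0$; setting $\mu := T^*\lambda \in \cP_p(X)$ we get $\mu(A) = \alpha$ for the clopen set $A \subset X$. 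The key structural point is that, because the $G$-orbit of $x_o$ is dense in $X$ and $A \subset X$ is a \emph{nonempty} open set (it contains $x_o$, as $e \in A$ — this uses $e \in A$; if $e \notin A$, replace $A$ by a translate, or argue directly that $A$ meets the dense orbit so is a nonempty open set, hence charged by \emph{some} ergodic $p$-stationary measure), every $G$-invariant open set meeting the orbit has full measure for an appropriate stationary measure.

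The heart of the argument is a covering statement: I claim that for every $\eps > 0$ there is a finite set $F \subset G$ with $FA = X$, i.e. the clopen sets $\{g^{-1}\cdot A : g \in F\}$ cover $X$. Here $FA$ denotes the action set $\bigcup_{g\in F} g\cdot A$ inside $X$. Indeed, $\{g \cdot A : g \in G\}$ is an open cover of $X$: given $x \in X$, by minimality-type density we want some $g$ with $g^{-1} x \in A$, equivalently $g \in A_x^{-1}$; since $x$ lies in the closure of the orbit of $x_o$ and $A$ is clopen and nonempty, the set $A_x = \{g : g\cdot x \in A\}$ is nonempty for every $x$ in the support of any ergodic $p$-stationary measure, but to get it for \emph{all} $x \in X$ one needs that $A$, viewed in $X$, is \emph{thick-covering}, which is exactly what $\cL_p$-largeness should deliver after the density argument below. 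By compactness of $X$, if $\{g\cdot A\}_{g\in G}$ covers $X$ then a finite subfamily $F$ does, whence $FA = X$ in $X$, and applying $T^*\eta$ for any $\eta\in\cM$ and using Lemma \ref{open} with $U = FA$ open gives $\eta((FA)_{x_o}) \geq \nu(FA) = \nu(X) = 1$. Since $(FA)_{x_o} = \bigcup_{g\in F} g A_{x_o} = FA$ as subsets of $G$, this yields $\eta(FA) = 1$ for \emph{every} $\eta \in \cM$, in particular for every $\eta \in \cL_p$, so $d^*_{\cL_p}(FA) = 1 > 1 - \eps$.

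The remaining — and I expect main — obstacle is to justify that $\{g\cdot A : g\in G\}$ actually covers $X$, or more modestly that for every $\eps>0$ a finite $F$ achieves $d^*_{\cL_p}(FA) > 1-\eps$ even when it does not cover all of $X$. The clean route is: by the ergodic decomposition of $p$-stationary measures (every $\mu \in \cP_p(X)$ is a barycenter of ergodic ones), combined with the fact that $A$ has positive mass under $\mu = T^*\lambda$, one finds an \emph{ergodic} $p$-stationary $\mu' $ with $\mu'(A) > 0$; then, by ergodicity, $\bigcup_{g\in G} g\cdot A$ is $G$-invariant of positive measure, hence $\mu'$-conull, and by inner regularity and compactness a finite $F$ gives $\mu'(FA) > 1-\eps$. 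One must then upgrade from "$\mu'$-conull" to "$\cL_p$-large" by the inequality $\eta((FA)_{x_o}) \geq \mu'(FA)$, valid for the \emph{specific} mean $\eta$ whose pushforward is $\mu'$ — but here we want it for the supremum over all of $\cL_p$, which is automatic since $d^*_{\cL_p}(FA) \geq \eta(FA) = \eta((FA)_{x_o}) \geq \mu'(FA) > 1-\eps$. The delicate point to handle carefully is that ergodic components of a $p$-stationary measure are themselves $p$-stationary (not merely measure-preserving), which holds because the space of $p$-stationary measures is a simplex whose extreme points are the ergodic $p$-stationary measures; this is standard for stationary dynamics and I would cite it rather than reprove it. Once that is in place, the choice of $F$ depends only on $\eps$ and $A$, as required, and the proof is complete.
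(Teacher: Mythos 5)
Your ``clean route'' is correct and its skeleton coincides with the paper's: pass to the Bebutov triple, produce an \emph{ergodic} $p$-stationary measure on $X$ charging the clopen set $A$, use ergodicity plus $\sigma$-additivity to get $\nu(FA)>1-\eps$ for a finite $F$, and transfer back to $G$ via clopen-ness of $FA$. The difference is in how the ergodic measure is produced and related back to $\cL_p$. The paper first applies Bauer's Maximum Principle to choose an \emph{extremal} $\lambda\in\cL_p^{\textrm{ext}}$ with $\lambda(A)=d^*_{\cL_p}(A)$, and then uses the second half of Lemma \ref{onto} ($T^*$ maps $\cL_p^{\textrm{ext}}$ into $\cP_p(X)^{\textrm{erg}}$) so that $\nu=T^*\lambda$ is already ergodic; no decomposition on $X$ is needed. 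You instead push forward an arbitrary maximizing mean, apply the ergodic decomposition of $p$-stationary measures on $X$ to find an ergodic component $\mu'$ with $\mu'(A)>0$, and then must \emph{lift} $\mu'$ back to some $\eta\in\cL_p$ with $T^*\eta=\mu'$ --- this lift is exactly the surjectivity half of Lemma \ref{onto} (Hahn--Banach plus Kakutani), and it is the one assertion in your argument that you state without justification; it is genuinely needed, since $d^*_{\cL_p}(FA)\geq\eta(FA)$ requires $\eta$ to be $p$-stationary, not merely a mean. With that citation supplied, your proof is complete; the paper's choice trades your two nontrivial inputs (ergodic decomposition in $\cP_p(X)$ and surjectivity of $T^*$) for two others (Bauer's principle and extremal-to-ergodic), so neither route is really more economical. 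Finally, your opening covering attempt ($FA=X$ for finite $F$) is indeed a dead end --- $A_x$ can be empty for points $x$ in the orbit closure, e.g.\ if $\emptyset$ or $A^c$-like configurations lie in $X$ --- and you were right to abandon it.
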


\begin{lemma}
\label{lefterg}
Let $A \subset G$ be a $\cF_p$-large set. For every $\eps > 0$, there exists a finite set $F \subset G$ such that $d^*_{\cF_p}(FA) > 1 - \eps$.
\end{lemma}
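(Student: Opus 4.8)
The plan is to reduce the statement to the dynamical input from Step I, namely Lemma \ref{sumexceed_harm}, applied to a carefully chosen ergodic $(G,p)$-space. First I would pass from $A$ to its associated Bebutov triple $(X,x_o,A)$, where now $A \subset X$ denotes the clopen set with $A_{x_o}$ equal to the original subset of $G$. By definition of $d^*_{\cF_p}$ and weak*-compactness of $\cF_p$, we may pick $\lambda \in \cF_p$ with $\lambda(A_{x_o}) = d^*_{\cF_p}(A) =: \alpha > 0$. Setting $\nu = T^*\lambda$ with $T$ the Bebutov map, we get a $p$-stationary measure on $X$ with $\nu(A) = \alpha$ (equality since $A$ is clopen, by Corollary \ref{corlowbnd}). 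The key point is that because $\lambda \in \cF_p$ (and not merely $\lambda \in \cL_p$), the measure $\nu$ will be compatible with the Furstenberg-mean machinery, which is exactly what Lemma \ref{sumexceed_harm} requires — this is where the distinction between $\cF_p$ and $\cL_p$ enters, and it is the reason the lemma is stated for $\cF_p$.

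The main step is to arrange an ergodic $(G,p)$-space out of this data. I would take an ergodic decomposition of $(X,\nu)$ as a $(G,p)$-space — stationary measures decompose into ergodic stationary components — and observe that since $\int \nu_\omega(A)\,d\omega = \nu(A) = \alpha$, a positive-measure set of ergodic components $\nu_\omega$ satisfy $\nu_\omega(A) > \alpha - \eps'$ for any prescribed $\eps' > 0$. Now I would like to choose a finite set $F \subset G$ and a clopen set $B = \bigcap_{l \in F^{-1}} l\cdot(\text{something})$, or more simply exploit density of the orbit of $x_o$: for a fixed ergodic component $(Y,\nu_\omega)$, the action set $A^{-1}B$ has full measure as soon as $\nu_\omega(A^{-1}) + \nu_\omega(B) > 1$. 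Taking $B$ to be a small clopen neighborhood and using that $\nu_\omega(A^{-1})$ can be made close to $1$ by replacing $A$ with $FA$ for a suitable finite $F$ — iterating Lemma \ref{sumexceed_harm} finitely many times, each application boosting the measure of the action set — one pushes $d^*_{\cF_p}(FA)$ above $1 - \eps$. Concretely: if $\nu(A) = \alpha$, then after applying the action-set expansion with $B = A$ one finds a finite $F_1$ with $\nu(F_1 A) \geq$ something like $2\alpha - \alpha^2$ (a "Plünnecke–Ruzsa"-type gain), and iterating drives the density to $1$; translating back via Lemma \ref{open} and Corollary \ref{corlowbnd} gives $d^*_{\cF_p}(FA) > 1 - \eps$ for the accumulated finite set $F$.

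The hard part will be making the iteration argument genuinely uniform and ensuring at each stage that the relevant inequality $d^*_{\cF_p}(\cdot) + \nu(\cdot) > 1$ is strict so that Lemma \ref{sumexceed_harm} applies, as well as checking that passing to an ergodic component does not destroy the Furstenberg-mean property needed for that lemma. One must verify that ergodic components of a $p$-stationary measure are again $p$-stationary in the appropriate ergodic sense and that the density $d^*_{\cF_p}$ evaluated on a clopen set dominates $\nu_\omega$-measures of the corresponding sets for a suitable choice of component — this requires that the ergodic decomposition be realized within the class of means coming from $\cF_p$, which should follow from the fact that $\cF_p$ is characterized by the vanishing condition \eqref{defFp} on $p$-harmonic functions, a condition preserved under the relevant averaging. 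Once these compatibility points are secured, the quantitative boosting is a finite induction and the translation back to $G$ via the Bebutov correspondence is routine.
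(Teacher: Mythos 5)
There are two genuine gaps here, and together they mean the argument does not go through. First, the density--boosting mechanism you propose is not available: Lemma \ref{sumexceed_harm} is an all-or-nothing statement whose hypothesis is $d^*_{\cF_p}(A) + \nu(B) > 1$, and it gives no conclusion whatsoever when $\alpha = \nu(A)$ is small. There is no ``Pl\"unnecke--Ruzsa''-type gain of the form $\nu(F_1A) \geq 2\alpha - \alpha^2$ to be extracted from it, so the finite induction you describe never gets started. (The logical architecture of the paper is the reverse of what you propose: Lemma \ref{lefterg} is Step II, whose whole purpose is to \emph{arrange} the hypothesis $d^* + \nu > 1$ so that Step I can then be applied; deriving Step II from Step I is circular.) The correct amplification mechanism is much more elementary: once one has an \emph{ergodic} $p$-stationary $\nu$ with $\nu(A) > 0$, the $G$-invariant set $GA$ is $\nu$-conull, and $\sigma$-additivity produces a finite $F$ with $\nu(FA) > 1 - \eps$ in one step.

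Second, and more seriously, your treatment of the ergodic decomposition dodges the actual difficulty. After decomposing $\nu = T^*\lambda$ and finding a good ergodic component $\nu_\omega$, you must return to a mean in $\cF_p$ — the conclusion is about $d^*_{\cF_p}(FA)$, not $d^*_{\cL_p}(FA)$, and the paper stresses that neither of Lemmas \ref{lemmaleftergLp} and \ref{lefterg} implies the other in the non-Liouville case. An ergodic component of $T^*\lambda$ is, via Lemma \ref{onto}, of the form $T^*\eta$ for some $\eta \in \cL_p$, but there is no reason for that $\eta$ to satisfy the Furstenberg condition \eqref{defFp}; your claim that this ``should follow'' because the condition is ``preserved under the relevant averaging'' is not justified and is where the proof would break. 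The paper's resolution avoids decomposition entirely: by Bauer's maximum principle one chooses $\lambda$ \emph{extremal} in $\cF_p$ with $\lambda(A) = d^*_{\cF_p}(A)$, and the key technical input — Lemma \ref{Fpext}, proved via the Poisson boundary and a Hewitt--Yosida decomposition — shows $\cF_p^{\textrm{ext}} \subset \cL_p^{\textrm{ext}}$, so that $T^*\lambda$ is already ergodic and the mean never leaves $\cF_p$. That lemma is the real content of the proof, and it is absent from your proposal.
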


Even though the two lemmas look quite  similar, the second one is significantly more involved to prove (the proof is
given in Subsection \ref{subsecerg} below). We stress that unless $(G,p)$ is Liouville, neither of the two lemmas 
above imply the other. 

\subsection{Step III: Ergodic decomposition}

Suppose that $(Y,\nu)$ is either a p.m.p.\ $G$-space or a $(G,p)$-space. We stress that we do not assume
that $\nu$ is ergodic. Let $\cP_G^{\textrm{erg}}(X)$ and $\cP_p^{\textrm{erg}}(X)$ denote the set of \emph{ergodic}
measures in $\cP_G(X)$ and $\cP_p(X)$ respectively. By either Theorem 4.8 in \cite{EW} or Corollary 2.7 in 
\cite{BS}, $\nu$ admits an \emph{ergodic decomposition}, i.e. a Borel probability measure $\kappa$ on either $\cP_G^{\textrm{erg}}(X)$ or $\cP^{\textrm{erg}}_p(X)$ respectively , such that
\[
\nu(B) = \int_{\cP(Y)} \mu(B) \, d\kappa(\mu), \quad \textrm{for every Borel set $B \subset Y$}.
\]
Given a Borel set $B \subset Y$ and $\eps > 0$, we define
\[
B_\eps = \big\{ \mu \in \cP_{\bullet}^{\textrm{erg}}(Y) \, : \, \mu(B) \geq \eps \big\}, \quad \textrm{where $\bullet=G$ 
or $p$}.
\]
and a straightforward application of Fubini's Theorem shows that
\[
\int_0^1 \kappa(B_\eps) \, d\eps = \nu(B), \quad \textrm{for every Borel set $B \subset Y$}.
\]
The following two lemmas are now immediate. 
\begin{lemma}
\label{eps1}
For every Borel set $B \subset Y$, there exists $\eps > 0$ such that $\kappa(B_\eps) \geq \nu(B)$.
\end{lemma}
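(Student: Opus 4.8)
The plan is to read the claim off from the identity $\int_0^1 \kappa(B_\eps)\,d\eps = \nu(B)$ established just above, using only the elementary fact that a $[0,1]$-valued integrable function on the unit interval takes values at least as large as its own integral on a set of positive Lebesgue measure.

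First I would dispose of the trivial case $\nu(B) = 0$, in which $\eps = 1$ already works since $\kappa(B_1) \geq 0 = \nu(B)$; so from now on assume $\nu(B) > 0$. Next I would check the routine measurability point needed to make sense of the integral: the evaluation map $\mu \mapsto \mu(B)$ is Borel on $\cP(Y)$, hence each $B_\eps = \{\mu \in \cP_{\bullet}^{\textrm{erg}}(Y) : \mu(B) \geq \eps\}$ is a Borel set and $\eps \mapsto \kappa(B_\eps)$ is well-defined; moreover $B_{\eps'} \subseteq B_\eps$ whenever $\eps \leq \eps'$, so this function is non-increasing in $\eps$, in particular measurable.

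The heart of the matter is then a one-line averaging argument. Suppose, towards a contradiction, that $\kappa(B_\eps) < \nu(B)$ for every $\eps \in (0,1]$. Then $\eps \mapsto \nu(B) - \kappa(B_\eps)$ is a non-negative, bounded, measurable function which is \emph{strictly} positive throughout $(0,1]$, a set of Lebesgue measure $1$, and hence has strictly positive integral over $[0,1]$; that is, $\int_0^1 \kappa(B_\eps)\,d\eps < \nu(B)$, contradicting the displayed identity. Therefore there exists $\eps \in (0,1]$ with $\kappa(B_\eps) \geq \nu(B)$, which is the assertion. (Equivalently: the superlevel set $\{\eps \in [0,1] : \kappa(B_\eps) \geq \nu(B)\}$ has positive Lebesgue measure and so must contain some $\eps > 0$.)

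I do not anticipate any genuine obstacle here — this is precisely why the lemma is advertised as ``immediate''. The only places calling for a word of care are the measurability of $\eps \mapsto \kappa(B_\eps)$, which is free from its monotonicity, and the observation that the good parameter can be chosen strictly positive, which is automatic since a single point is Lebesgue-null.
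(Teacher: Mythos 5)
Your proof is correct and is exactly the argument the paper intends: the lemma is stated as ``immediate'' from the identity $\int_0^1 \kappa(B_\eps)\,d\eps = \nu(B)$, and your averaging/contradiction step (a $[0,1]$-valued function cannot lie strictly below its own average on a set of full measure) is the standard way to make that immediacy explicit. The remarks on measurability via monotonicity of $\eps \mapsto \kappa(B_\eps)$ and on choosing $\eps$ strictly positive are fine and need no further elaboration.
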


\begin{lemma}
\label{eps2}
For every Borel set $B \subset Y$ and $\eps > 0$, we have $\kappa(B_\eps) \geq \nu(B) - \eps$.
\end{lemma}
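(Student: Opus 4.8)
The plan is to read off Lemma \ref{eps2} directly from the Fubini identity $\int_0^1 \kappa(B_t)\,dt = \nu(B)$ recorded just above, combined with the elementary observation that the family $(B_t)$ is nested and decreasing in $t$. Concretely, if $0 < t' \le t$, then $\{\mu : \mu(B) \ge t\} \subseteq \{\mu : \mu(B) \ge t'\}$, so $B_t \subseteq B_{t'}$ and hence $t \mapsto \kappa(B_t)$ is a non-increasing function on $(0,1]$, taking values in $[0,1]$; in particular it is Borel measurable, so all the integrals below make sense.

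Fix $\eps > 0$. We may assume $0 < \eps < 1$, since for $\eps \ge 1$ the asserted bound reads $\kappa(B_\eps) \ge \nu(B) - \eps$ with a non-positive right-hand side, and there is nothing to prove. Split the integral at $t = \eps$:
\[
\nu(B) = \int_0^1 \kappa(B_t)\,dt = \int_0^\eps \kappa(B_t)\,dt + \int_\eps^1 \kappa(B_t)\,dt.
\]
On $[0,\eps]$ use the crude bound $\kappa(B_t) \le 1$, and on $[\eps,1]$ use monotonicity, $\kappa(B_t) \le \kappa(B_\eps)$. This gives
\[
\nu(B) \le \eps + (1-\eps)\,\kappa(B_\eps) \le \eps + \kappa(B_\eps),
\]
which rearranges to $\kappa(B_\eps) \ge \nu(B) - \eps$, as claimed.

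There is no genuine obstacle here: the only substantive input is the Fubini identity, whose proof in turn only uses the ergodic decomposition $\nu(B) = \int \mu(B)\,d\kappa(\mu)$ together with the layer-cake formula $\mu(B) = \int_0^1 \chi_{B_t}(\mu)\,dt$, both already in place. (One could equally well phrase the same computation by applying Chebyshev-type reasoning to the random variable $\mu \mapsto \mu(B)$ on the probability space $(\cP_{\bullet}^{\mathrm{erg}}(Y),\kappa)$, but the integral-splitting argument above is the most direct.)
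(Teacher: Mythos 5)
Your proof is correct and amounts to the same Chebyshev-type estimate as the paper's: the paper splits the ergodic-decomposition integral $\nu(B)=\int\mu(B)\,d\kappa(\mu)$ over $B_\eps$ and its complement, bounding $\mu(B)<\eps$ on $B_\eps^c$ and $\mu(B)\le 1$ on $B_\eps$, which is exactly the Fubini-dual of your splitting of $\int_0^1\kappa(B_t)\,dt$ at $t=\eps$. Both yield $\nu(B)\le\eps+\kappa(B_\eps)$ in one line, so there is no substantive difference.
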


\begin{proof}
Note that for every $\eps > 0$,
\[
\nu(B) = \int_{B_\eps^c} \mu(B) \, d\kappa(\mu) + \int_{B_\eps} \mu(B) \, d\kappa(\mu) \leq \eps + \kappa(B_\eps),
\]
and thus $\kappa(B_\eps) \geq \nu(B) - \eps$.
\end{proof}

\subsection{Proof of Theorem \ref{mainpmp}}

Let $(G,p)$ be a countable measured group and suppose that $(Y,\nu)$ is a p.m.p.\ $G$-space. Let $A \subset G$ be a 
$\cL_p$-large set, and let $B \subset Y$ be a Borel set with positive $\nu$-measure. Let $\kappa$ denote the
ergodic decomposition of $\nu$. By Lemma \ref{eps1} we can find $\eps > 0$ such that $\kappa(B_\eps) \geq \nu(B)$,
and by Lemma \ref{lemmaleftergLp} we can find a finite subset $F \subset G$ such that $d^*_{\cL_p}(FA) > 1 - \eps$.
Since $B_\eps$ consists of ergodic measures, and $d^*_{\cL_p}(FA) + \mu(B) > 1$ for all $\mu \in B_\eps$, 
Lemma \ref{sumexceed_pmp} guarantees that $\mu(FAB_\eps) = 1$ for all $\mu \in B_\eps$. Hence,
\[
\mu\Big(\bigcap_{g \in G} gFAB\Big)  = 1, \quad \textrm{for all $\mu \in B_\eps$},
\]
and thus
\[
\nu\Big(\bigcap_{g \in G} gFAB\Big) \geq \int_{B_\eps} \mu\Big(\bigcap_{g \in G} gFAB\Big) \, d\kappa(\mu) = \kappa(B_\eps) \geq \nu(B),
\]
which finishes the proof of the first assertion in Theorem \ref{mainpmp}. For the second assertion, let us fix $\eps > 0$, 
and choose, by Lemma \ref{lemmaleftergLp} a finite set $K \subset G$ such that $d^*_{\cL_p}(KA) > 1 - \eps$. 
By Lemma \ref{eps2}, we have $\kappa(B_\eps) \geq \nu(B) - \eps$, and by Lemma \ref{sumexceed_pmp}, we
have
\[
\mu\Big(\bigcap_{g \in G} gKAB\Big)  = 1, \quad \textrm{for all $\mu \in B_\eps$},
\]
and thus
\[
\nu\Big(\bigcap_{g \in G} gKAB\Big) \geq \int_{B_\eps} \mu\Big(\bigcap_{g \in G} gKAB\Big) \, d\kappa(\mu) = \kappa(B_\eps) \geq \nu(B) - \eps,
\]
which finishes the proof of the second assertion in Theorem \ref{mainpmp}.

\subsection{Proof of Theorem \ref{mainharm}}

The proof works almost verbatim the same as in the previous subsection with the following minor modifications: Instead of 
Lemma \ref{lemmaleftergLp}, we use Lemma \ref{lefterg}, and instead of Lemma \ref{sumexceed_pmp}, we use
Lemma \ref{sumexceed_harm}.

\section{Proofs of Lemma \ref{sumexceed_pmp} and Lemma \ref{sumexceed_harm}}

\subsection{Ergodic theorems}

Let us briefly recall our conventions from Subsection \ref{secact}. We say that $(Y,\nu)$ is a $(G,p)$-space
if there exists a compact and second countable space $\overline{Y}$, equipped with an action of $G$ by 
homeomorphisms, such that $Y$ is a $G$-invariant Borel set of $\overline{Y}$ and $\nu$ is a Borel probability
measure on $\overline{Y}$ such that $\nu(Y) = 1$. Finally, if $\eta$ is a mean on $G$ and $\phi$ is a bounded real-valued function on $G$, we write 
\[
\eta(\phi) = \int_G \phi(g) \, d\eta(g), 
\]
even though the right-hand side is not an integral in the Lebesgue sense. 

\begin{lemma}[Weak Ergodic Theorem]
\label{harmergthm}
Let $(Y,\nu)$ be an ergodic $(G,p)$-space and let $\eta \in \cF_p$. For all Borel sets $B, C \subset Y$, 
we have
\begin{equation}
\label{harmwet}
\int_G \nu(g^{-1}B \cap C) \, d\eta(g) = \nu(B) \, \nu(C).
\end{equation}
\end{lemma}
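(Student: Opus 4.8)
The plan is to prove the identity \eqref{harmwet} first for continuous functions on $\overline{Y}$ and then pass to Borel sets, and to exploit the defining property \eqref{defFp} of Furstenberg means together with $p$-stationarity of $\nu$ and ergodicity. First I would fix $\phi, \psi \in C(\overline{Y})$ and consider the bounded function on $G$ given by
\[
\Phi(g) = \int_Y \phi(g^{-1} \cdot y) \psi(y) \, d\nu(y).
\]
The key observation is that $\Phi$ is a $p$-harmonic function on $G$: using that $\nu$ is $p$-stationary one checks that $\sum_s p(s)\,\Phi(gs) = \Phi(g)$, i.e.\ $\Phi * p = \Phi$, so $\Phi \in H^\infty_r(G,p)$. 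Wait — one must be careful about the left/right convention here (this is precisely the subtle point flagged in the remark about \cite{BF3}); the correct statement is that $g \mapsto \nu(g^{-1} \cdot \phi \cdot \psi)$, built from the \emph{left} $G$-action on $\overline{Y}$, is a \emph{right} $p$-harmonic function in the sense of $H^\infty_r(G,p)$, matching the defining property \eqref{defFp} of $\lambda \in \cF_p$. Applying $\eta \in \cF_p$ then gives $\eta(\Phi) = \Phi(e) = \nu(\phi\psi)$, which is the continuous-function analogue of \eqref{harmwet} — but only provided $\Phi(e) = \nu(\phi\psi)$ factors as $\nu(\phi)\nu(\psi)$, which it does \emph{not} in general.

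This forces the argument to route through ergodicity rather than stationarity alone. The fix: instead of $\Phi$ itself, consider the harmonic function and use that, by ergodicity of the $p$-stationary system $(Y,\nu)$, the only $p$-harmonic functions arising as $\nu$-integrals of the form $y \mapsto \int_G \phi(g^{-1}\cdot y)\,d\eta(g)$-type averages must be $\nu$-a.e.\ constant, equal to $\nu(\phi)$. More precisely, I would set $h(y) = \int_G \psi(g \cdot y)\,d\eta(g)$ (using that $\eta \in \cF_p \subset \cL_p$ is $p$-stationary, so $h$ is a $p$-harmonic function on $Y$ in the appropriate sense), observe that ergodicity of the $(G,p)$-space forces $h$ to be $\nu$-essentially constant, hence $h(y) = \int_Y h \, d\nu = \nu(\psi)$ for $\nu$-a.e.\ $y$ (the last equality because $\eta$ is $p$-stationary and $\nu$ is $p$-stationary). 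Then
\[
\int_G \nu(g^{-1}B \cap C)\,d\eta(g) = \int_G \int_Y \chi_B(g\cdot y)\chi_C(y)\,d\nu(y)\,d\eta(g) = \int_Y h_B(y)\chi_C(y)\,d\nu(y) = \nu(B)\,\nu(C),
\]
where $h_B(y) = \int_G \chi_B(g\cdot y)\,d\eta(g)$, once the interchange of $\eta$ and $\nu$ is justified and once $h_B$ is replaced by its constant value $\nu(B)$. The Fubini-type interchange for a finitely additive $\eta$ needs care: I would do it by first proving the identity for $\chi_B, \chi_C$ replaced by continuous functions (where $\eta(\cdot)$ is a genuine limit of Cesàro averages $\frac1n\sum_{k=1}^n p^{*k}$ along a subnet, so ordinary Fubini applies termwise) and then upgrading to Borel sets by the usual monotone-class / outer-regularity argument using Lemma \ref{open} and its corollary.

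The main obstacle I anticipate is exactly the passage from continuous functions to Borel sets while keeping the finitely-additive mean $\eta$ under control, combined with getting the left/right bookkeeping right so that the function to which \eqref{defFp} is applied genuinely lies in $H^\infty_r(G,p)$ and not in the (trivial) space of left-$p$-harmonic functions. Concretely: the constancy-from-ergodicity step is clean only for the $\sigma$-additive measure $\nu$ acting on a genuine $p$-harmonic function of $y$; reconciling this with the $\eta$-average requires that $\eta$ be a weak\textsuperscript{*}-limit of the Cesàro averages \eqref{exFp} — which is available for \emph{some} Furstenberg mean but is used here for an \emph{arbitrary} $\eta \in \cF_p$, so one must instead argue abstractly from the defining property \eqref{defFp} alone. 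I expect the resolution is that for Borel $B$, the function $g \mapsto \nu(g^{-1}\cdot \chi_B * \psi)$ (suitably interpreted, with $\psi$ continuous) is a bounded pointwise limit of functions in $H^\infty_r(G,p)$, hence, while not itself necessarily in $H^\infty_r(G,p)$, still has $\eta$-integral equal to its value at $e$ by a dominated-convergence argument for $\eta$ applied along the clopen approximations from the Bebutov picture; this is the step where the bulk of the technical work will sit.
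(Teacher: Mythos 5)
Your proposal has a genuine gap at its central step, and the preliminary ``key observation'' is false. The function $\Phi(g)=\nu((g^{-1}\cdot\phi)\psi)$ is \emph{not} $p$-harmonic: stationarity of $\nu$ makes $k_\phi(g)=\nu(g^{-1}\cdot\phi)$ harmonic, but the extra factor $\psi$ (which is not translated along with $\phi$) destroys the identity $\Phi*p=\Phi$. Note that if $\Phi$ really were in $H^\infty_r(G,p)$, then $\eta(\Phi)=\Phi(e)=\nu(\phi\psi)$ would contradict the very lemma you are proving (which asserts the integral equals $\nu(\phi)\nu(\psi)$), so the obstruction is not merely that ``$\nu(\phi\psi)$ does not factor.'' More seriously, your fix routes ergodicity through the claim that $h_B(y)=\int_G\chi_B(g\cdot y)\,d\eta(g)$ is a harmonic function on $Y$ and hence a.e.\ constant. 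This does not work: left $p$-stationarity of $\eta$ is an identity about $g\mapsto f(sg)$, whereas replacing $y$ by $s\cdot y$ translates $g$ on the \emph{right} ($g\mapsto\chi_B(gs\cdot y)$), so $h_B$ satisfies no harmonicity equation; it is not even clear that $h_B$ is $\nu$-measurable for a general finitely additive $\eta$; the ergodicity of a $(G,p)$-space as defined (no nontrivial invariant sets) does not by itself force such a function to be constant; and the Fubini interchange of $\eta$ with $\nu$ in your displayed computation is unavailable for an arbitrary $\eta\in\cF_p$ (you acknowledge this, but only offer a remedy for cluster points of the Ces\`aro averages). Finally, the ``dominated-convergence argument for $\eta$'' invoked at the end does not exist: finitely additive means satisfy neither dominated nor monotone convergence.

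The paper's proof avoids all of these problems by never exchanging $\eta$ and $\nu$ and never asserting pointwise constancy. It integrates over $Y$ \emph{first}: for continuous $\phi$ it defines $\nu_C(\phi)=\eta\bigl(g\mapsto\nu((g^{-1}\cdot\phi)\chi_C)/\nu(C)\bigr)$, checks that $\nu_C$ is a genuine ($\sigma$-additive) $p$-stationary probability measure, and shows $\nu(C)\nu_C+\nu(C^c)\nu_{C^c}=\nu$ by applying the defining property of $\eta\in\cF_p$ to the truly harmonic function $k_\phi$. Ergodicity then enters as \emph{extremality} of $\nu$ in $\cP_p(\overline{Y})$, which forces $\nu_C=\nu$ and yields \eqref{harmwet} for continuous $\phi$. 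The passage to Borel sets is not a generic approximation either: Lusin's theorem produces an exceptional set $E$, and the error term $\int_G\nu(g^{-1}E)\,d\eta(g)$ is controlled precisely because $g\mapsto\nu(g^{-1}E)$ is itself $p$-harmonic, so $\eta\in\cF_p$ evaluates it to $\nu(E)<\eps$. To salvage your outline you would need to replace the constancy-of-$h_B$ step by this convexity/extremality mechanism (or an equivalent one); as written, ergodicity is never actually brought to bear in a valid way.
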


\begin{proof}
Given a Borel set $C \subset Y$ with positive measure and $\phi \in C(\overline{Y})$, we define
\[
f_\phi(g) = \frac{\nu((g^{-1} \cdot \phi)\chi_C)}{\nu(C)}, \quad \textrm{for $g \in G$}.
\]
Given any $\eta \in \cM$, one readily checks that the functional $\nu_C(\phi) := \eta(f_\phi)$ on $C(\overline{Y})$,
is unital and positive, and thus a Borel probability measure on 
$\overline{Y}$. We shall first prove that if $\eta \in \cL_p$, then $\nu_C$ is $p$-stationary. To do this, first note
that
\[
f_{s^{-1} \cdot \phi}(g) = f_\phi(sg) = (s^{-1} \cdot f_\phi)(g), \quad \textrm{for all $g,s \in G$}.
\]
Hence, since $\eta$ is assumed to be left $p$-stationary, 
\begin{eqnarray*}
\sum_{s \in G} p(s) (s \cdot \nu_C)(\phi) 
&=& 
\sum_{s \in G} p(s) \eta(f_{s^{-1} \cdot \phi}) 
=
\sum_{s \in G} p(s) \eta(s^{-1} \cdot f_\phi) \\
&=&
\sum_{s \in G} p(s) (s \cdot \eta)(f_\phi) 
= 
\eta(f_\phi) = \nu_C(\phi),
\end{eqnarray*}
for every $\phi \in C(\overline{Y})$, which shows that $\nu_C$ is $p$-stationary. We conclude that
\begin{equation}
\label{nuCY}
\nu_Y = \nu(C) \nu_C + \nu(C^c) \nu_{C^c},
\end{equation}
for every Borel set $C \subset Y$ with positive measure, where
\[
\nu_Y(\phi) = \int_G \nu(g^{-1} \cdot \phi) \, d\eta(g), \quad \textrm{for $\phi \in C(\overline{Y})$}.
\]
Let us from now on assume that $\eta \in \cF_p$. We note that since $\nu$ is $p$-stationary, the function 
$k_\phi(g) = \nu(g^{-1} \cdot \phi)$ on $G$ belongs to $H^\infty_r(G,p)$, and thus 
\[
\nu_Y(\phi) = \eta(k_\phi) = k_\phi(e) = \nu(\phi), \quad \textrm{for all $\phi \in C(\overline{Y})$}.
\]
Hence $\nu_Y = \nu$. Since we have assumed that $\nu$ is ergodic, and thus an extreme point in $\cP_p(\overline{Y})$ (see e.g. Corollary 2.7 in \cite{BS}), the identity \eqref{nuCY} now implies that
$\nu = \nu_C = \nu_{C^c}$. In particular,
\begin{equation}
\label{phicont}
\nu(\phi) \nu(C) = \int_G \nu((g^{-1} \cdot \phi) \chi_C) \, d\eta(g), \quad \textrm{for all $\phi \in C(\overline{Y})$}.
\end{equation}
If $\nu$ is $G$-invariant, then a rather straightforward approximation argument shows that one can replace
$\phi$ with any \emph{bounded} Borel measurable function. If $\nu$ is only $p$-stationary, then this approximation argument gets a bit more involved. Assume for now that $\phi$ is a bounded measurable function and fix $\eps > 0$. By Lusin's Theorem, we can find $\psi$ in $C(\overline{Y})$ and a measurable set $E \subset Y$ with $\nu(E) < \eps$ such that $\phi = \psi$ on $\overline{Y} \setminus E$. We note that
\[
\big| \nu((g^{-1} \cdot \phi) \chi_C) - \nu((g^{-1} \cdot \psi) \chi_C) \big| \leq \nu(g^{-1}E)
\]
for all $g \in G$, and thus, since the $\eta$-integral of the term which involves the continuous function $\psi$ equals $\nu(\psi) \nu(C)$
by \eqref{phicont}  above, we have
\[
\big| \int_G \nu((g^{-1} \cdot \phi)\chi_C) \, d\eta(g) - \nu(\phi) \nu(C) \big| \leq \max(\|\phi\|_\infty,\|\psi\|_\infty) \cdot \big(\eps + \int_G \nu(g^{-1}E) \, d\eta(g) \big).
\]
We note that since $\nu$ is $p$-stationary, the function $f(g) = \nu(g^{-1}E)$ is $p$-harmonic. Since $\eta$ belongs
to $\cF_p$ we see that $\eta(f) = f(e) = \nu(E) < \eps$. Since $\eps > 0$ is arbitrary, we conclude that
\begin{equation}
\label{phimeas}
\int_G \nu((g^{-1} \cdot \phi)\chi_C) \, d\eta(g) = \nu(\phi) \nu(C), 
\end{equation}
for every bounded Borel measurable function $\phi$ on $Y$. In particular, we can take $\phi = \chi_B$, where $B$
is a $G$-invariant Borel set. It follows from \eqref{phimeas} that $\nu(B \cap C) = \nu(B)\nu(C)$ for every Borel set 
$C \subset Y$, and thus $\nu(B)$ equals either $0$ or $1$. We conclude that $\nu$ is ergodic. \\
\end{proof}

Let us now list three rather immediate consequences of the proof of the Weak Ergodic Theorem above.

\begin{scholium}
\label{scholiumergthm}
Let $(Y,\nu)$ be an ergodic $(G,p)$-space and let $\eta \in \cF_p$. For all $\phi, \psi \in L^\infty(Y,\nu)$,
we have
\[
\int_G \nu((g^{-1} \cdot \phi) \psi) \, d\eta(g) = \nu(\phi) \, \nu(\psi).
\]
\end{scholium}

\begin{scholium}
\label{pmpergthm}
Let $(Y,\nu)$ be an ergodic p.m.p.\ $G$-space and $\eta \in \cF_p$. For all Borel sets $B, C \subset Y$, we have
\[
\int_G \nu(gB \cap C) \, d\eta(g) = \nu(B) \, \nu(C).
\]
\end{scholium}

\begin{proof}
Since $\nu$ is $G$-invariant, we have $\nu(gB \cap C) = \nu(B \cap g^{-1}C)$ for all $g \in G$, and the 
result follows from the Weak Ergodic Theorem.
\end{proof}

Recall that $\cL_p \subset \cM$ is a weak*-compact and convex subset. We say that $\lambda \in \cL_p$ is 
\textbf{extreme} if it cannot be written as a non-trivial convex combination of distinct elements in $\cL_p$. By
Krein-Milman's Theorem, extreme points always exist. We denote by $\cL_p^{\textrm{ext}}$ the set of extreme
points in $\cL_p$.

\begin{scholium}
\label{harmergthmmean}
Let $\lambda \in \cL_p^{\textrm{ext}}$ and $\eta \in \cF_p$. For all $B, C \subset G$, we have
\[
\int_G \lambda(g^{-1}B \cap C) \, d\eta(g) = \lambda(B) \, \lambda(C).
\]
More generally, for all $f_1, f_2 \in \ell^\infty(G)$, we have
\[
\int_G \lambda((g^{-1} \cdot f_1)f_2) \, d\eta(g) = \lambda(f_1) \lambda(f_2).
\]
\end{scholium}

\begin{proof}
Except for the approximation of Borel functions with continuous functions, everything in the proof of the Weak
Ergodic Theorem above could have been done for finitely additive probability measures on any set equipped 
with a $G$-action; in particular, we could have applied it to $Y = G$ and $\nu = \lambda$, where $\lambda$ is 
an \emph{extremal} ("ergodic") mean in $\cL_p$. Of course, in this case, no Borel approximation argument is required.
\end{proof}

\subsection{Proof of Lemma \ref{sumexceed_harm}}
Fix $A \subset G$ and a Borel set $B \subset Y$ such that $\eta(A) + \nu(B) > 1$ for some $\eta \in \cF_p$.
Assume, for the sake of contradiction, that the complementary set $C = (A^{-1}B)^c$ is not null, and define the 
function $f(g) = \nu(g^{-1}B \cap C)$ for $g \in G$. We note 
that
\[
f(g) \leq \nu(C), \quad \textrm{for all $g \in G$} \qand f(a) = 0, \quad \textrm{for all $a \in A$}.
\]
Furthermore, by Lemma \ref{harmergthm}, we have $\eta(f) = \nu(B) \nu(C)$, and thus (by monotonicity of 
means)
\[
\nu(B) \nu(C) = \eta(f) = \eta(\chi_{A^c} f) \leq  \eta(A^c) \nu(C) = (1-\eta(A))\nu(C).
\]
Since $\nu(C) > 0$, we conclude that $\eta(A) + \nu(B) \leq 1$, which is a contradiction.

\subsection{Proof of Lemma \ref{sumexceed_pmp}}

The proof is almost verbatim the same as for Lemma \ref{sumexceed_harm}, except that when we assume that 
$C = (AB)^c$ is not null, and set $f(g) = \nu(gB \cap C)$ for $g \in G$, we use Scholium \ref{pmpergthm}
instead of Lemma \ref{harmergthm}.

\section{Proofs of Lemma \ref{lemmaleftergLp} and Lemma \ref{lefterg}}

\subsection{Correspondence Principles for measured groups}

Let $(G,p)$ be a countable measured group and let $(X,x_o)$ be a compact metrizable pointed $G$-space. 
Let $T$ be the Bebutov map of $(X,x_o)$, defined as in \eqref{defBebutov}.
We note that since both $\cL_p \subset \ell^\infty(G)^*$ and $\cP_p(X) \subset C(X)^*$ are weak*-closed and 
\emph{convex} subsets of dual Banach spaces, the sets $\cL_p^{\textrm{ext}}$ and 
$\cP_p(X)^{\textrm{ext}}$ of \emph{extreme points} in $\cL_p$ and $\cP_p(X)$ respectively, 
are non-empty by Krein-Milman's Theorem. If we denote by $\cP_G(X)^{\textrm{erg}}$ and 
$\cP_p(X)^{\textrm{erg}}$ the set of \emph{ergodic} elements in $\cP_G(X)$ and $\cP_p(X)$ respectively, 
then, as is well-known (see e.g. \cite{EW} and \cite{BS}), we have
\[
\cP_G(X)^{\textrm{erg}} = \cP_G(X)^{\textrm{ext}}
\qand
\cP_p(X)^{\textrm{erg}} = \cP_p(X)^{\textrm{ext}}.
\]
The following lemma will be useful.
\begin{lemma}
\label{onto}
The adjoint $T^* : \cL_p \ra \cP_p(X)$ is onto, and maps $\cL_p^{\textrm{ext}}$ onto $\cP_p(X)^{\textrm{erg}}$.
\end{lemma}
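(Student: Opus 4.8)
The plan is to prove the two claims of Lemma~\ref{onto} in sequence: first surjectivity of $T^*:\cL_p\ra\cP_p(X)$, then the statement about extreme points. Recall that $T:C(X)\ra\ell^\infty(G)$ is the Bebutov map $(T\phi)(g)=\phi(g\cdot x_o)$, which is positive, unital, and $G$-equivariant, so $T^*$ does map $\cL_p$ into $\cP_p(X)$ as already noted in the text.

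For surjectivity, I would argue as follows. Fix $\mu\in\cP_p(X)$. The subspace $T(C(X))\subset\ell^\infty(G)$ carries the functional $\phi\mapsto\mu(\phi)$ pulled back along $T$; more precisely, since $x_o$ has dense $G$-orbit, $T$ is injective, so there is a well-defined positive unital functional $\lambda_0$ on $T(C(X))$ given by $\lambda_0(T\phi)=\mu(\phi)$. By the Hahn--Banach theorem (in its positive/order form, or just by extending and using that the extension of a unital functional of norm one from a subspace containing the constants is automatically positive) we extend $\lambda_0$ to a mean $\lambda\in\cM$ on all of $\ell^\infty(G)$ with $T^*\lambda=\mu$. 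The remaining point is to arrange $\lambda\in\cL_p$, i.e. $p*\lambda=\lambda$. This is the step I expect to be the main obstacle: an arbitrary Hahn--Banach extension need not be $p$-stationary. The fix is an averaging/fixed-point argument: let $K=\{\lambda\in\cM : T^*\lambda=\mu\}$, which is a nonempty (by the above), weak*-compact, convex subset of $\cM$. Because $\mu$ is $p$-stationary and $T$ is $G$-equivariant, one checks that $p*\lambda\in K$ whenever $\lambda\in K$ (indeed $T^*(p*\lambda)=p*(T^*\lambda)=p*\mu=\mu$). Hence the affine weak*-continuous map $\lambda\mapsto p*\lambda$ carries the compact convex set $K$ into itself, and Kakutani's (or Markov--Kakutani's) fixed point theorem produces $\lambda\in K\cap\cL_p$, giving $T^*\lambda=\mu$.

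For the second claim, I would first show $T^*(\cL_p^{\textrm{ext}})\subset\cP_p(X)^{\textrm{erg}}=\cP_p(X)^{\textrm{ext}}$, then show the image is all of it. For the inclusion: suppose $\lambda\in\cL_p^{\textrm{ext}}$ but $T^*\lambda=\mu=\tfrac12(\mu_1+\mu_2)$ with $\mu_1,\mu_2\in\cP_p(X)$ distinct. Using the fiber $K_i=\{\eta\in\cL_p : T^*\eta=\mu_i\}$ (nonempty by the surjectivity just proved) one wants to pull this decomposition back to a nontrivial convex decomposition of $\lambda$, contradicting extremality. The cleanest route: pick $\eta_i\in K_i$; then $\tfrac12(\eta_1+\eta_2)\in\cL_p$ has the same image $\mu$ as $\lambda$, but that alone does not force $\lambda=\tfrac12(\eta_1+\eta_2)$. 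A better approach uses the barycenter/measure-disintegration: realize $\mu$ as $\tfrac12\mu_1+\tfrac12\mu_2$ and lift the two-point measure $\tfrac12\delta_{\mu_1}+\tfrac12\delta_{\mu_2}$ on $\cP_p(X)$ to a decomposition of $\lambda$ inside $\cL_p$ via a measurable section of $T^*$; extremality of $\lambda$ then forces $\mu_1=\mu_2$. For surjectivity onto $\cP_p(X)^{\textrm{erg}}$: given an ergodic (= extreme) $\mu\in\cP_p(X)$, its fiber $K=\{\eta\in\cL_p:T^*\eta=\mu\}$ is nonempty, weak*-compact, convex, so has an extreme point $\lambda_0$ (Krein--Milman); one checks $\lambda_0$ is extreme not merely in $K$ but in all of $\cL_p$, because any decomposition of $\lambda_0$ in $\cL_p$ projects under $T^*$ to a decomposition of the extreme point $\mu$, hence stays inside the fiber $K$, where $\lambda_0$ was chosen extreme. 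Thus $\lambda_0\in\cL_p^{\textrm{ext}}$ and $T^*\lambda_0=\mu$.

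The main obstacle throughout is the interaction between Hahn--Banach-type extension (which respects positivity and the $C(X)$-subspace but not $p$-stationarity) and the $p$-stationarity constraint; this is resolved by the Kakutani averaging trick over the compact convex fiber. The extreme-point bookkeeping is then soft: the key recurring observation is that $T^*$ maps fibers to points, so any convex decomposition downstairs of an extreme/ergodic measure forces the decomposing pieces upstairs to lie in a single fiber, which lets extremality transfer between $\cL_p$ and $\cP_p(X)$ in both directions.
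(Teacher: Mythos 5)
Your argument for surjectivity of $T^*:\cL_p\to\cP_p(X)$ is exactly the paper's: Hahn--Banach extension of the positive unital functional on $T(C(X))$ (positivity being automatic for a norm-one unital extension), followed by the observation that the fiber $\{\eta\in\cM : T^*\eta=\mu\}$ is weak*-compact, convex and invariant under $\eta\mapsto p*\eta$, so Kakutani produces a stationary mean in it. Your argument that every ergodic $\mu\in\cP_p(X)$ is hit by some $\lambda_0\in\cL_p^{\textrm{ext}}$ (take an extreme point of the fiber over $\mu$; any decomposition of $\lambda_0$ in $\cL_p$ pushes to a decomposition of the extreme point $\mu$, hence stays in the fiber) is correct and soft; the paper in fact does not spell this direction out.

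The genuine gap is in the inclusion $T^*(\cL_p^{\textrm{ext}})\subset\cP_p(X)^{\textrm{erg}}$. This is \emph{not} a soft convexity fact: an affine weak*-continuous surjection between compact convex sets can map extreme points to non-extreme points (project a disk onto a segment), so there is no general principle letting you ``lift'' a decomposition $\mu=\tfrac12(\mu_1+\mu_2)$ to a decomposition of $\lambda$. Your proposed fix via a ``measurable section of $T^*$'' does not produce one: the natural candidate $\lambda_i(f)=2\lambda\bigl(f\cdot T(d\mu_i/d\mu)\bigr)$ fails because the Radon--Nikodym derivative is only Borel, and the identity $\lambda(T\psi)=\mu(\psi)$ is only guaranteed for \emph{continuous} $\psi$ (for open sets one merely has the inequality of Lemma \ref{open}), so one cannot verify $T^*\lambda_i=\mu_i$, nor that the $\lambda_i$ remain $p$-stationary. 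The paper closes this gap with genuinely ergodic-theoretic input: Scholium \ref{harmergthmmean} shows that an extreme $\lambda\in\cL_p$ satisfies the weak ergodic identity $\int_G\lambda((g^{-1}\cdot f_1)f_2)\,d\eta(g)=\lambda(f_1)\lambda(f_2)$ for $\eta\in\cF_p$; pushing this through $T$ gives $\int_G\nu((g^{-1}\cdot\phi_1)\phi_2)\,d\eta(g)=\nu(\phi_1)\nu(\phi_2)$ for continuous $\phi_i$, a Lusin-type approximation (as in the proof of the Weak Ergodic Theorem) extends it to bounded Borel functions, and taking $\phi_1=\phi_2=\chi_B$ for a $G$-invariant Borel set $B$ yields $\nu(B)=\nu(B)^2$, i.e.\ ergodicity of $\nu=T^*\lambda$. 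Without some such input (ultimately resting on Lemma \ref{Fpext} and the Poisson boundary), the forward inclusion does not follow.
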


\begin{proof}
We first prove that $T^*$ is onto. Fix $\nu \in \cP(X)$, and consider the weak*-compact and convex set
\[
\cC_\nu = \big\{ \eta \in \cM \, : \, \eta(T\phi) = \nu(\phi), \quad \textrm{for all $\phi \in C(X)$} \big\}.
\]
We claim that $\cC_\nu$ is non-empty. Indeed, $\nu$ defines a positive and unital functional $\eta_\nu$ on 
the linear subspace $T(C(X)) \subset \ell^\infty(G)$ by $\eta_\nu(T\phi) = \nu(\phi)$. By Hahn-Banach's Theorem,
this functional extends to a norm-one functional $\eta$ on $\ell^\infty(G)$, so it suffices to show that $\eta$ 
is positive (it is unital by construction since $1 \in T(C(X))$ and $\nu$ is a probability measure). Suppose that $f \in \ell^\infty(G)$ is non-negative. Then $\|\|f\|_\infty - f\|_\infty \leq \|f\|_\infty$, and thus
\[
\|f\|_\infty - \eta(f) = \eta(\|f\|_\infty - f) \leq \|\|f\|_\infty -f \|_\infty \leq \|f\|_\infty,
\]
which shows that $\eta(f) \geq 0$, and thus $\eta \in \cC_\nu$. \\

Now suppose that $\nu \in \cP_p(X)$, and fix $\eta \in \cC_\nu$. We claim that $p * \eta \in \cC_\nu$. Indeed, note
that for every $s \in G$ and $\phi \in C(X)$, we have $(s \cdot \eta)(T\phi) = (s \cdot \nu)(\phi)$, and thus
\begin{eqnarray*}
(p * \eta)(T\phi) 
&=& 
\sum_{s \in G} p(s) (s \cdot \eta)(T\phi) = 
\sum_{s \in G} p(s) (s \cdot \nu)(\phi) \\
&=&
\nu(\phi) = \eta(T\phi), \quad \textrm{for all $\phi \in C(X)$}.
\end{eqnarray*}
By Kakutani's Fixed Point Theorem, this implies that there exists $\xi \in \cL_p \cap \cC_\nu$, and thus $T^*$ is onto. \\

Suppose that $\lambda \in \cL_p^{\textrm{ext}}$. We claim that $\nu = T^*\lambda \in \cP_p(X)$ is ergodic. Indeed, by 
Scholium \ref{harmergthmmean}, we know that for every $\eta \in \cF_p$ and for all $\phi_1, \phi_2 \in C(X)$, we have
\[
\int_G \nu((g^{-1} \cdot \phi_1) \phi_2) \, d\eta(g) 
= 
\int_G \lambda((g^{-1} \cdot T\phi_1) T\phi_2) \, d\eta(g) 
=
\lambda(\phi_1) \, \lambda(\phi_2) = \nu(\phi_1) \nu(\phi_2).
\]
By the same approximation argument as in the proof of Lemma \ref{harmwet}, this identity can be extended to all bounded Borel measurable functions $\phi_1$ and $\phi_2$ on $X$. Suppose that $\phi = \chi_B$, where $B \subset X$ is a $G$-invariant Borel set.  The identity above now implies that
\[
\nu(B) = \int_G \nu((g^{-1} \cdot \chi_B) \chi_B) \, d\eta(g) = \nu(B)^2,
\]
and thus $B$ is either null or conull. This shows that $\nu$ must be ergodic. 
\end{proof}

\subsection{Ergodicity for Furstenberg means}

\begin{theorem}[Furstenberg \cite{Fu71}]
For every measured group $(G,p)$ there exists an ergodic $(G,p)$-space $(Z,m)$ such that the bounded,
positive and unital, linear map $P : L^\infty(Z,m) \ra H_r^\infty(G,p)$ defined by
\[
(P \phi)(g) = \int_Z \phi(g \cdot z) \, dm(z),
\]
is an isometric isomorphism of Banach spaces. 
\end{theorem}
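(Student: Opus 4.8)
The plan is to realize $(Z,m)$ as the Poisson boundary of the $p$-random walk on $G$ and to translate between bounded $p$-harmonic functions and bounded boundary functions via martingale convergence. First I would set up the path space: let $\Omega$ be the space of sequences $(X_n)_{n\ge 0}$ in $G$ with its Polish product topology, and for $g\in G$ let $\bP_g$ be the law of the Markov chain $X_0=g$, $X_n=g\xi_1\cdots\xi_n$ with $(\xi_i)$ i.i.d.\ of law $p$; write $\bP=\bP_e$, $\cF_n=\sigma(X_0,\dots,X_n)$, and let $S$ be the shift $(SX)_n=X_{n+1}$, so that $\bP_g=g\cdot\bP$ and, by conditioning on $X_1$, $S_*\bP=\sum_s p(s)\bP_s$. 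Let $\cI$ be the $\sigma$-algebra of $\bP$-essentially $S$-invariant sets, take $\pi\colon\Omega\to Z$ a standard Borel model of the quotient by $\cI$ (so $\pi\circ S=\pi$ $\bP$-a.s.\ and the diagonal action $g\cdot(X_n)_n=(gX_n)_n$ descends $G$-equivariantly), and set $m=\pi_*\bP$. Then $\pi_*S_*\bP=\pi_*\bP$ gives $m=\sum_s p(s)(s\cdot m)$, i.e.\ $m$ is $p$-stationary; and by a routine topological realization (embed $Z$ into a compact metric $G$-space using a countable, point-separating, $G$-invariant family of bounded Borel functions) I would regard $(Z,m)$ as a $(G,p)$-space in the sense of Subsection~\ref{secact}. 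Since $p$ is admissible, $m$ is $G$-quasi-invariant, so $P$ is well defined on $L^\infty(Z,m)$, positive, unital, with $\|P\phi\|_\infty\le\|\phi\|_\infty$, and $(P\phi)(g)=\int_Z\phi(g\cdot z)\,dm(z)=\bE_g[\phi\circ\pi]$.

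Next I would prove the two facts that do the work. For harmonicity: the Markov property gives $\bE_g[\phi\circ\pi\circ S]=\sum_s p(s)\bE_{gs}[\phi\circ\pi]=(P\phi*p)(g)$, while $\phi\circ\pi\circ S=\phi\circ\pi$ $\bP$-a.s.\ forces the left side to equal $(P\phi)(g)$; hence $P\phi\in H^\infty_r(G,p)$. For the key convergence lemma: again by the Markov property and $\phi\circ\pi\circ S^n=\phi\circ\pi$, one has $(P\phi)(X_n)=\bE[\phi\circ\pi\mid\cF_n]$ $\bP$-a.s., so L\'evy's upward martingale convergence theorem yields $(P\phi)(X_n)\to\phi\circ\pi$ $\bP$-a.s. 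Since $|(P\phi)(X_n)|\le\|P\phi\|_\infty$ and $m=\pi_*\bP$, this gives $\|\phi\|_\infty\le\|P\phi\|_\infty$, so $P$ is an isometry, in particular injective.

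For surjectivity, take $f\in H^\infty_r(G,p)$; then $\bE_g[f(X_{n+1})\mid\cF_n]=(f*p)(X_n)=f(X_n)$, so $(f(X_n))_n$ is a bounded $\bP$-martingale converging $\bP$-a.s.\ to some bounded limit $\widetilde f$, which is $\bP$-essentially $S$-invariant and hence of the form $\phi\circ\pi$ with $\phi\in L^\infty(Z,m)$. The function $D=f-P\phi\in H^\infty_r(G,p)$ then satisfies $D(X_n)\to\widetilde f-\phi\circ\pi=0$ $\bP$-a.s.\ (using the convergence lemma for $P\phi$); since $(D(X_n))_n$ is a bounded martingale, $\bE[D(X_n)^2]=D(e)^2+\sum_{k<n}\bE[(D(X_{k+1})-D(X_k))^2]$ is nondecreasing in $n$ and tends to $0$, hence is identically $0$, so $D(X_n)=0$ $\bP$-a.s.\ for every $n$; as the laws $p^{*n}$ of $X_n$ together exhaust $G$ by admissibility, $D\equiv 0$, i.e.\ $f=P\phi$. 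Thus $P$ is an isometric isomorphism. Ergodicity is then immediate: for a $G$-invariant Borel set $E\subset Z$ one has $(P\chi_E)(g)=m(g^{-1}E)=m(E)$ for all $g$, so $P\chi_E$ is constant and, $P$ being injective and unital, $\chi_E=m(E)$ $m$-a.e., forcing $m(E)\in\{0,1\}$.

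The conceptual content — that $H^\infty_r(G,p)$ is $L^\infty$ of the boundary via the Poisson transform — is essentially forced once the path space is in hand, and reduces to the Markov property plus standard martingale convergence. The genuinely technical point is the construction of $(Z,m)$ itself: passing from the sub-$\sigma$-algebra $\cI$ to a standard Borel quotient carrying a measurable $G$-action (Rokhlin's theory of Lebesgue spaces), and then producing a \emph{compact, second countable} model so that $(Z,m)$ literally fits the definition of a $(G,p)$-space used here. This is exactly the part one should attribute to Furstenberg's construction in \cite{Fu71}, and I would cite it rather than redo it.
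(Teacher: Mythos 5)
The paper gives no proof of this statement at all --- it is simply attributed to Furstenberg \cite{Fu71} --- so there is nothing internal to compare your argument against. What you have written is a correct sketch of the standard construction: realize $(Z,m)$ as the quotient of the path space by the shift-invariant $\sigma$-algebra, get harmonicity of $P\phi$ from the Markov property, the reverse inequality $\|\phi\|_\infty \le \|P\phi\|_\infty$ from L\'evy's upward martingale convergence applied to $(P\phi)(X_n)=\bE[\phi\circ\pi\mid \cF_n]$, and surjectivity by bounded martingale convergence of $(f(X_n))$ together with your $L^2$-increment argument (or, more directly, $f(g)=\bE_g[f(X_n)]\to \bE_g[\phi\circ\pi]=(P\phi)(g)$ by bounded convergence, once one knows the limit of $f(X_n)$ under $\bP_g$ is $\phi\circ\pi$). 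Two points deserve the care you already flag: quasi-invariance of $m$ (needed for $P$ to be well defined and contractive on $L^\infty(Z,m)$) does follow from admissibility via $m=\sum_g p^{*n}(g)\,(g\cdot m)$; and the genuinely technical content --- producing a standard Borel, and then compact second countable, model of the quotient on which the $G$-action and the identity $\pi\circ S=\pi$ hold off a $G$-invariant null set --- is exactly Furstenberg's contribution, and deferring it to \cite{Fu71} is appropriate given that the paper itself does the same.
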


We shall refer to $(Z,m)$ as the \textbf{Poisson boundary} of the measured group $(G,p)$, and to $P$ as
its \textbf{Poisson transform}. The main ingredient in the proof of Lemma \ref{lefterg} is the following result, 
which utilize the existence of such a $(G,p)$-space.

\label{subsecerg}
\begin{lemma}
\label{Fpext}
For every measured group $(G,p)$, we have $\cF_p^{\textrm{ext}} \subset \cL_p^{\textrm{ext}}$.
\end{lemma}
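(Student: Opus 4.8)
The strategy is to prove the stronger statement that $\cF_p$ is a \textbf{face} of the convex set $\cL_p$: whenever $\lambda\in\cF_p$ can be written as $\lambda = t\mu_1 + (1-t)\mu_2$ with $\mu_1,\mu_2\in\cL_p$ and $0<t<1$, then in fact $\mu_1,\mu_2\in\cF_p$. Granting this, the lemma is immediate: if $\lambda\in\cF_p^{\textrm{ext}}$ is written as such a convex combination in $\cL_p$, the face property forces $\mu_1,\mu_2\in\cF_p$, and extremality of $\lambda$ \emph{within} $\cF_p$ then gives $\mu_1=\mu_2=\lambda$, so $\lambda\in\cL_p^{\textrm{ext}}$.

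To establish the face property I would exploit the Poisson boundary $(Z,m)$ and the Poisson transform $P:L^\infty(Z,m)\to H^\infty_r(G,p)$ from Furstenberg's theorem. Given a mean $\eta$ on $G$, put $\overline{\eta}=\eta\circ P$, a positive unital functional on $L^\infty(Z,m)$. Since $P$ is $G$-equivariant (immediate from its defining formula), a computation identical in spirit to the one showing that $\nu_C$ is $p$-stationary in the proof of Lemma \ref{harmergthm} shows that if $\eta\in\cL_p$ then $\overline{\eta}$ satisfies $\sum_s p(s)\,\overline{\eta}(s^{-1}\cdot\phi)=\overline{\eta}(\phi)$ for all $\phi\in L^\infty(Z,m)$. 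The crucial observation is that, for $\eta\in\cL_p$, one has $\eta\in\cF_p$ if and only if $\overline{\eta}=m$, i.e.\ $\overline{\eta}(\phi)=\int_Z\phi\,dm$ for all $\phi$; indeed $\overline{\eta}(\phi)=\eta(P\phi)$ and $(P\phi)(e)=\int_Z\phi\,dm$, so $\overline{\eta}=m$ exactly says that $\eta(f)=f(e)$ for every $f\in H^\infty_r(G,p)$. Now if $\lambda\in\cF_p$ decomposes as $\lambda=t\mu_1+(1-t)\mu_2$ in $\cL_p$, then $m=\overline{\lambda}=t\,\overline{\mu_1}+(1-t)\,\overline{\mu_2}$. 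Evaluating on indicators of Borel sets $E\subset Z$ gives $\overline{\mu_i}(E)\le t^{-1}m(E)$ (resp.\ $(1-t)^{-1}m(E)$); since $m$ is countably additive, this bound forces each $\overline{\mu_i}$ to be continuous along decreasing Borel sets shrinking to $\emptyset$, hence countably additive. Thus each $\overline{\mu_i}$ is a $p$-stationary Borel probability measure on $Z$, absolutely continuous with respect to $m$.

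The face property therefore reduces to the following \emph{Claim}: the harmonic measure $m$ is the \emph{unique} $p$-stationary Borel probability measure on $Z$ that is absolutely continuous with respect to $m$. Granting the Claim, $\overline{\mu_i}=m$, hence $\mu_i\in\cF_p$, and the face property is proved. To prove the Claim, let $\nu\ll m$ be $p$-stationary. One first checks $\nu\sim m$: the set $N=\{\,d\nu/dm=0\,\}$ is $\nu$-null, and $p$-stationarity together with admissibility of $p$ propagate this to $\nu(gN)=0$ for every $g\in G$, so $\nu$ vanishes on the $G$-invariant set $\bigcup_g gN$; were $m(N)>0$, ergodicity of $(Z,m)$ would make $\bigcup_g gN$ $m$-conull, contradicting $\nu(Z)=1$ via $\nu\ll m$. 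Hence $\nu\sim m$, so $\tfrac12(m+\nu)$ lies in $\cP_p$ and is ergodic (ergodicity depending only on the measure class of $m$), hence extreme in $\cP_p$ by Corollary 2.7 in \cite{BS}; but $\tfrac12(m+\nu)$ is visibly a midpoint of $m$ and $\nu$ in $\cP_p$, so $m=\nu$. I expect the heart of the argument to be precisely this Claim — uniqueness of the stationary measure in the measure class of $m$ on the Poisson boundary — together with its resolution through the ``ergodic $\Leftrightarrow$ extreme'' dichotomy for $p$-stationary measures; the remaining steps are bookkeeping with the Poisson transform and elementary convexity.
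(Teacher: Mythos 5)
Your proposal is correct, and its overall architecture coincides with the paper's: both prove that $\cF_p$ is a face of $\cL_p$ by pulling the decomposition $\lambda = t\mu_1 + (1-t)\mu_2$ back through the Poisson transform, using the characterization that $\eta \in \cL_p$ lies in $\cF_p$ exactly when $\eta \circ P = m$, and then invoking uniqueness of the $p$-stationary measure absolutely continuous with respect to $m$ to force $\overline{\mu_i} = m$. Where you diverge is in how you establish that the pulled-back functionals $\overline{\mu_i} \in L^\infty(Z,m)^*$ are genuine (countably additive) measures: the paper passes through the Hewitt--Yosida decomposition $L^\infty(Z,m)^* \cong L^1(Z,m) \oplus V$, argues that this splitting is $G$-equivariant so that the countably additive and purely finitely additive parts of $\mu_i$ are each separately $p$-stationary, applies the uniqueness lemma to the $L^1$-part, and then kills the singular part by a linear-independence argument. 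Your observation that $\overline{\mu_i} \le t^{-1}m$ on indicators already forces continuity from above at $\emptyset$, hence countable additivity, short-circuits all of this and is cleaner — it avoids having to justify that the Yosida--Hewitt splitting commutes with convolution by $p$. The other difference is that the paper outsources the uniqueness claim to Lemma 2.6(2) of \cite{BS}, whereas you prove it directly via equivalence of $\nu$ with $m$ and the ergodic-implies-extreme dichotomy; your argument is sound (it uses only admissibility of $p$, ergodicity of the Poisson boundary, and Corollary 2.7 of \cite{BS}, all available in the paper) and makes the proof self-contained at the cost of a few extra lines.
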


\begin{proof}
Let $(Z,m)$ denote the Poisson boundary of $(G,p)$ and let $P : L^\infty(Z,m) \ra H^\infty_r(G,p)$ be its 
Poisson transform. Let $\eta \in \cF_p^{\textrm{ext}}$ and suppose for the sake of contradiction that $\eta$ is \emph{not} an extreme point
in $\cL_p$. This means that we can find \emph{distinct} $\eta_1, \eta_2 \in \cL_p$ so that $\eta = \alpha_1 \eta_1 + \alpha_2 \eta_2$ 
for some \emph{positive} real numbers $\alpha_1, \alpha_2$ with $\alpha_1 + \alpha_2 = 1$. For $i = 1,2$, we define a positive and unital functional $\mu_i \in L^\infty(Z,m)^*$ by
\[
\mu_i(\phi) = \eta_i(P\phi), \quad \textrm{for $\phi \in L^\infty(Z,m)$}.
\]
Since $\eta \in \cF_p$, we have $\eta(P\phi) = m(\phi)$ for every $\phi \in L^\infty(Z,m)$ and thus 
$m = \alpha_1 \mu_1 + \alpha_2 \mu_2$ in the dual space of $L^\infty(Z,m)$. By the Hewitt-Yosida
Theorem, we can write
\[
L^\infty(Z,m)^{*} \cong L^1(Z,m)^{**} \cong L^1(Z,m) \oplus V,
\]
for some linear subspace $V \subset L^1(Z,m)^{**}$, and where we identify $L^1(Z,m)$ with signed measured on $Z$
which are absolutely continuous with respect to $m$. We can then find uniquely determined elements $m_1, m_2 \in L^1(Z,m)$ and $\gamma_1, \gamma_2 \in V$ such that
\[
\mu_i = \beta_i m_i + \gamma_i \lambda_i, \quad \textrm{for $i = 1,2$},
\]
for some non-negative numbers $\beta_i$ and $\gamma_i$ with $\beta_i + \gamma_i = 1$. Since the decomposition 
of $L^1(Z,m)^{**}$ above is clearly invariant under the $G$-action, and $p * \mu_i = \mu_i$ for $i = 1,2$, we must have
\[
p * m_i = m_i \qand p * \lambda_i = \lambda_i, \quad \textrm{for $i = 1,2$}.
\]
Since $m_i << m$, we conclude by Lemma 2.6(2) in \cite{BS} that $m= m_1 = m_2$, and thus
\[
(1 - \alpha_1 \beta_1 - \alpha_2 \beta_2) m = \alpha_1 \gamma_1 \lambda_1 + \alpha_2 \gamma_2 \lambda_2.
\]
Since $m$ does not belong to the linear sub-space spanned by $\lambda_1$ and $\lambda_2$, we conclude that
\[
1 = \alpha_1 \beta_1 + \alpha_2 \beta_2 \qand \alpha_1 \gamma_1 = \alpha_2 \gamma_2 = 0.
\]
Since $a_1$ and $a_2$ are positive, we deduce that $\gamma_1 = \gamma_2 = 0$ and thus $\beta_1 = \beta_2 = 1$. We conclude that $\mu_1 = \mu_2 = m$, and thus $\eta_1, \eta_2 \in \cF_p$. Since $\eta$ is extremal in $\cF_p$, we see that $\eta_1 = \eta_2$, which is a contradiction.
\end{proof}

\subsection{Proofs of Lemma \ref{lemmaleftergLp} and Lemma \ref{lefterg}}

Let $(G,p)$ be a measured group and suppose that $A \subset G$ is either $\cF_p$-large or $\cL_p$-large. 
Since the map $\lambda \mapsto \lambda(A)$ is convex and weak*-continuous on both $\cF_p$ and $\cL_p$,
there exist by Bauer's Maximum Principle (see for instance \cite{AB06}) \textit{extremal} $\lambda_1 \in \cF_p$ and $\lambda_2 \in \cL_p$ such that 
\[
d^*_{\cF_p}(A) = \lambda_1(A) \qand d^*_{\cL_p}(A) = \lambda_2(A).
\]
By Lemma \ref{Fpext}, $\lambda_1$ is also extremal in $\cL_p$, and thus Lemma \ref{lemmaleftergLp} and
Lemma \ref{lefterg} are both consequences of the following lemma. 

\begin{lemma}
\label{leftergLp}
Let $\eta \in \cL_p^{\textrm{ext}}$ and suppose that $A \subset G$ has positive $\eta$-measure. For every $\eps > 0$, there exists a finite set $F \subset G$ such that $\eta(FA) > 1 - \eps$.
\end{lemma}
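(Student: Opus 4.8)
The plan is to run a greedy covering argument driven by the ergodic-type identity in Scholium~\ref{harmergthmmean}. Write $\alpha = \eta(A) > 0$ and fix, once and for all, an auxiliary Furstenberg mean $\xi \in \cF_p$ (such a mean exists). Applying Scholium~\ref{harmergthmmean} to the extreme mean $\eta \in \cL_p^{\textrm{ext}}$ and to $\xi$, we obtain, for every $C \subset G$,
\[
\int_G \eta(g^{-1}A \cap C) \, d\xi(g) = \eta(A)\,\eta(C) = \alpha\,\eta(C).
\]
The point of this identity is that whenever $\eta(C) > 0$, the nonnegative bounded function $g \mapsto \eta(g^{-1}A \cap C)$ has $\xi$-average $\alpha\,\eta(C)$, and since $\xi$ is unital and monotone its supremum is at least this average; hence there exists $g_0 \in G$ with $\eta(g_0^{-1}A \cap C) \geq \tfrac{\alpha}{2}\,\eta(C)$.

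Next I would set up the induction. Start with $F_0 = \emptyset$, so $\eta(F_0 A) = 0$. Suppose $F$ is a finite set with $\eta(FA) \leq 1 - \eps$ and put $C = G \setminus FA$, so $\eta(C) \geq \eps$. By the previous paragraph there is $g_0 \in G$ with $\eta(g_0^{-1}A \cap C) \geq \tfrac{\alpha\eps}{2}$. Enlarging $F$ to $F' = F \cup \{g_0^{-1}\}$, we have $F'A = FA \cup g_0^{-1}A$, and by finite additivity of $\eta$,
\[
\eta(F'A) = \eta(FA) + \eta\big(g_0^{-1}A \setminus FA\big) = \eta(FA) + \eta(g_0^{-1}A \cap C) \geq \eta(FA) + \tfrac{\alpha\eps}{2}.
\]
Thus each step increases $\eta(FA)$ by the fixed positive amount $\tfrac{\alpha\eps}{2}$. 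Since $\eta(FA) \leq 1$ at all times, the process must terminate after at most $\lceil 2/(\alpha\eps)\rceil$ steps, and it can only terminate by producing a finite set $F$ with $\eta(FA) > 1 - \eps$, which is what we want.

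I do not expect a genuine obstacle: essentially all the analytic content is carried by Scholium~\ref{harmergthmmean} (the Weak Ergodic Theorem), which we may quote. The only minor points requiring care are the passage from ``positive $\xi$-average'' to ``a single group element with a definite lower bound'' — handled by monotonicity of means, at the cost of the harmless factor $\tfrac12$ — and the bookkeeping observation that it is precisely $g_0^{-1}$ (rather than $g_0$) that must be adjoined to $F$, so that $g_0^{-1}A$ genuinely becomes one of the left translates constituting $F'A$.
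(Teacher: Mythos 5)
Your argument is correct, and it takes a genuinely different route from the paper. The paper's proof passes to the Bebutov triple of $A$: the pushforward $\nu = T^*\eta$ is a genuine ($\sigma$-additive) $p$-stationary Borel probability measure on a compact space, which is \emph{ergodic} because $\eta$ is extreme (Lemma \ref{onto}); ergodicity gives $\nu(GA)=1$, $\sigma$-additivity then produces a finite $F$ with $\nu(FA) > 1-\eps$, and clopenness of $FA$ transfers this back to $\eta(FA)$. You instead stay entirely at the level of the finitely additive mean $\eta$ and run a greedy exhaustion powered by Scholium \ref{harmergthmmean}, applied with the extreme mean $\eta$ in the first slot and an auxiliary Furstenberg mean $\xi \in \cF_p$ (which exists, as cluster points of the sequence \eqref{exFp}) in the second; your bookkeeping with $g_0^{-1}A \setminus FA = g_0^{-1}A \cap C$ and finite additivity is exactly right, as is the harmless loss of a factor $\tfrac12$ in extracting a single $g_0$ from the $\xi$-average. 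The two proofs rest on the same underlying ergodic identity --- Scholium \ref{harmergthmmean} is also the engine behind the ergodicity claim in Lemma \ref{onto} --- but your version avoids the topological-dynamical detour ($\sigma$-additivity is never needed) and has the added benefit of an explicit cardinality bound $|F| \leq \lceil 2/(\eta(A)\,\eps)\rceil$, which the paper's compactness/exhaustion step does not provide.
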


\begin{proof}
We abuse notation and let $(X,x_o,A)$ denote the Bebutov representation of the set $A$. Let $T$ denote the 
Bebutov map of $(X,x_o)$ as in \eqref{defBebutov}. Since $A \subset X$ is clopen, $FA$ is clopen for every \emph{finite} set $F \subset G$. Hence, if we set $\nu = T^*\eta$, then 
\[
\eta(A_{x_o}) = \nu(A) > 0 \qand \eta(FA_{x_o}) = \nu(FA), \quad \textrm{for every \emph{finite} set $F \subset G$}.
\]
Since $\eta \in \cL_p^{\textrm{ext}}$, we know that 
$\nu$ is ergodic by Lemma \ref{onto}. Fix $\eps > 0$. By ergodicity and our assumption that $\nu(A) > 0$, 
we have $\nu(GA) = 1$, and thus, by $\sigma$-additivity of $\nu$, there exists a finite set $F \subset G$ such that $\nu(FA) > 1 - \eps$. 
We conclude that $\eta(FA_{x_o}) > 1 - \eps$. 
\end{proof}

\subsection{Non-extremality of spherical means}

After seeing the utility of extreme means in $\cL_p$, it is natural to ask how one recognizes such means. Since
the construction of $p$-stationary means on \emph{infinite} groups requires some version of the Axiom of Choice, 
this is probably a very hard question, perhaps lacking a definite answer. In this subsection, we shall focus on the measured group $(\bF_r,\sigma_1)$ and
show that means in $\cS_r$ (which are always $\sigma_1$-stationary) can \emph{never} be extreme in $\cL_{\sigma_1}$. The notation will be as in Subsection \ref{subsec:sphere}. \\

Let us first deal with the (somewhat degenerate) case $r = 1$, where $\bF_1 \cong \bZ$ and $a_1$ corresponds to
the generator $1$. We note that
\[
\sigma_k = \frac{1}{2}(\delta_{k} + \delta_{-k}), \quad \textrm{for $k \geq 1$}, 
\]
and thus each $\lambda \in \cS_1$ is a weak*-cluster point of the sequence $(\lambda_n)$ defined by
\[
\lambda_n(B) = \frac{|B \cap [-n,n]|}{2n+1}, \quad \textrm{for $B \subset \bZ$}. 
\]
Let $T = [1,\infty) \subset \bZ$, and note that $T$ is a sub-semigroup of $\bZ$ such that $\bZ = T \sqcup (-T) \sqcup \{0\}$. One readily checks that if $\lambda \in \cS_1$, then $\lambda(T) = \lambda(-T) = 1/2$, and 
\[
\lambda = \frac{1}{2} \lambda_{+} + \frac{1}{2} \lambda_{-}(T), \quad \textrm{where $\lambda_{\pm}(\cdot) = 2 \lambda( \cdot \cap \pm T)$}.
\]
Furthermore, since for every $n \in \bZ$, the symmetric difference $T \Delta (T \pm n)$ has zero $\lambda$-measure, 
we conclude that $\lambda_{+}$ and $\lambda_{-}$ are distinct element in $\cL_{\sigma_1} = \cL_{\bF_1}$. In particular, $\lambda$ is not extreme in $\cL_{\sigma_1}$. \\

The case when $r \geq 2$ is slightly more involved. Let us fix $\lambda \in \cS_r$ and a homomorphism 
$\tau : \bF_r \ra \bZ$ whose kernel has zero $\lambda$-measure. Define 
\[
T = \big\{ g \in G \, : \, \tau(g) \geq 1 \big\} \subset \bF_r,
\]
and note that $T$ is a sub-semigroup of $\bF_r$, with the property that $T \Delta gT$ has zero $\lambda$-measure
for every $g \in G$, and $\bF_r = T \sqcup T^{-1} \sqcup \ker \tau$. We conclude that $\lambda(T) = \lambda(T^{-1}) = 1/2$, and if we set
\[
\lambda_{+}(B) = 2 \lambda(B \cap T) \qand \lambda_{-}(B) = 2\lambda(B \cap T^{-1}), \quad \textrm{for $B \subset \bF_r$},
\]
then $\lambda$ can be written as a non-trivial convex combination of $\lambda_{+}$ and $\lambda_{-}$, which are 
distinct elements in $\cL_{\sigma_1}$ (we leave the details to the reader). We conclude that $\lambda$ is not 
extreme in $\cL_{\sigma_1}$. \\

It might be worth emphasizing here that although the main results in Subsection \ref{subsec:sphere} are stated 
in terms of spherical densities, we do not know of proofs of these results which do not utilize the fact (Lemma \ref{exFp}) that the larger set $\cF_{\sigma_1}$ does contain extreme elements in $\cL_p$.


\section{SAT*-spaces and the proof of Theorem \ref{piecewiseFol}}
\label{sec:prfpwFol}
Let $G$ be a countable group and suppose that $(Z,m)$ is a standard Borel probability measure
space, equipped with a measure-class preserving action of $G$ by bi-measurable maps. We say
that $(Z,m)$ is \textbf{SAT} (strongly approximate transitive) if for every Borel set $B \subset Z$
which is not conull, and $\eps > 0$, there exists $g \in G$ such that $m(g^{-1}B) < \eps$. In 
other words, $(Z,m)$ is far away from being a p.m.p.\ $G$-space. This notion was first introduced
by Jaworski in \cite{Jaw}. Finally, we say that $(Z,m)$ is 
\textbf{SAT*} if it is SAT and if for every $g \in G$, the Radon-Nikodym derivative $\frac{d(g \cdot m)}{dm}$ 
is essentially bounded on $Z$. This notion was first introduced by Kaimanovich in \cite{KaSAT}. 

We claim that the Poisson boundary of a measured group is always SAT. Indeed, if $(G,p)$ is a
measured group, $(Z,m)$ its Poisson boundary and $P$ its Poisson transform, then for every 
Borel set $B \subset Z$ which is not conull, we have $\|\chi_{B^c}\|_{L^\infty} = 1$ and $P\chi_{B^c}(g) = 1 - m(g^{-1}B)$
for all $g \in G$. Since $P$ is an isometric map, we conclude that we must have $\|P\chi_{B^c}\|_{H^\infty_r} = 1$ as well, 
which readily shows that $m(g^{-1}B)$ can be made arbitrarily small upon varying $g$. 

We claim that if $(Y,\nu)$ is any $(G,p)$-space, then the Radon-Nikodym derivative $\frac{ds\nu}{d\nu}$ is always
essentially bounded for every $s \in G$ (in particular, the Poisson boundary of the measured group $(G,p)$ is SAT*). Indeed, note that by $p$-stationarity of $\nu$, we have
\[
\sum_{g \in G} p^{*n}(g) \frac{dg\nu}{d\nu}(y) = 1, \quad \textrm{for $\nu$-a.e. $y$}.
\]
Fix $s \in G$. By admissibility of $p$, there exists an integer $n$ such that $p^{*n}(s) > 0$, and thus
\[
p^{*n}(s) \frac{ds\nu}{d\nu}(y) \leq \sum_{g \in G} p^{*n}(g) \frac{dg\nu}{d\nu}(y) = 1, 
\]
for $\nu$-a.e. $y$, and thus $\frac{ds\nu}{d\nu}(y) \leq 1/p^{*n}(s)$ almost everywhere. \\

Our main lemma in this section reads as follows.

\begin{lemma}
\label{FsB}
Let $(Z,m)$ be a SAT* $G$-space and fix $\eps > 0$. For every non-conull Borel set $B \subset Y$ and for every 
\emph{finite} set $F \subset G$, there exists $s \in G$ such that
\[
m(FsB) < \eps.
\]
In particular, for every $\eps > 0$ and non-conull Borel set $B \subset Y$ there exists a thick set $A \subset G$
such that $m(AB) < \eps$.
\end{lemma}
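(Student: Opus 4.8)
The plan is to use the two defining features of a SAT*-space separately. The SAT property will be used to make a \emph{single} translate $sB$ have small measure, and the essential boundedness of the Radon--Nikodym derivatives $\frac{d(g\cdot m)}{dm}$ will then let me control the whole union $FsB=\bigcup_{f\in F}(fs)B$ at the cost of a multiplicative constant depending only on $F$.

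First I would record the elementary fact that if $\mu$ is a Borel measure on $Z$ with $\mu\ll m$ and $\frac{d\mu}{dm}\le C$ $m$-almost everywhere, then $\mu(E)\le C\,m(E)$ for every Borel set $E\subset Z$. I apply this with $\mu=f^{-1}\cdot m$ for each $f\in F$: this measure is absolutely continuous with respect to $m$ because the $G$-action is measure-class preserving, and its density is essentially bounded by the SAT* hypothesis, say by some constant $C_f<\infty$. Since $m(fsB)=(f^{-1}\cdot m)(sB)$ for every $s\in G$ (the change-of-variables formula for pushforwards, with the conventions of this section), subadditivity gives
\[
m(FsB)\;\le\;\sum_{f\in F}m(fsB)\;=\;\sum_{f\in F}(f^{-1}\cdot m)(sB)\;\le\;\Big(\sum_{f\in F}C_f\Big)m(sB)
\]
for every $s\in G$. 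It remains to choose $s$. Since $B$ is not conull and $(Z,m)$ is SAT, and since $m(sB)=(s^{-1}\cdot m)(B)$, for every $\delta>0$ there is $s\in G$ with $m(sB)<\delta$; taking $\delta=\eps/\sum_{f\in F}C_f$ yields $m(FsB)<\eps$, which proves the first assertion.

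For the ``in particular'' statement I would build the thick set by exhaustion. Enumerate the countably many finite subsets of $G$ as $L_1,L_2,\ldots$. Using the first part, choose for each $n$ an element $s_n\in G$ with $m(L_n s_n B)<\eps\,2^{-n-1}$, and set $A=\bigcup_{n\ge 1}L_n s_n$. Then $A$ is thick, because any finite $L\subset G$ equals some $L_n$ and $Ls_n=L_n s_n\subset A$; and $m(AB)\le\sum_{n\ge 1}m(L_n s_n B)<\eps\sum_{n\ge 1}2^{-n-1}<\eps$, as required.

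The only point that needs care is keeping the action conventions of this section straight: it is $\frac{d(f^{-1}\cdot m)}{dm}$ --- and not $\frac{d(f\cdot m)}{dm}$ --- that must be essentially bounded in order to estimate $m(fsB)$, and the SAT property, phrased in terms of $m(g^{-1}B)$, is exactly what produces a small $m(sB)$ after the substitution $s=g^{-1}$. There is no deeper obstacle; the lemma is essentially a repackaging of the definitions of SAT and SAT*, and its interest lies in the fact that the Poisson boundary of a measured group, verified above to be SAT*, is an instance of it.
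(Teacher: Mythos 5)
Your argument is correct and follows the paper's proof essentially verbatim: the same SAT*/Radon--Nikodym bound $m(FsB)\le\bigl(\sum_{f\in F}C_f\bigr)m(sB)$ followed by an application of SAT to shrink $m(sB)$, and the same summable-errors union construction for the thick set (enumerating all finite subsets rather than taking an exhaustion of $G$ is an immaterial variation). No gaps.
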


\begin{proof}
Let $B \subset Y$ be a non-conull Borel set and let $F \subset G$ be a finite set. For every $s \in G$, we have
\[
m(FsB) \leq |F| \, \max_{f \in F} \big\| \frac{df^{-1}m}{dm}\big\|_\infty \, m(sB).
\]
The right hand side is finite since the $G$-space $(Z,m)$ is SAT*. Fix $\eps > 0$. Since $(Z,m)$ is SAT, we 
can now find $s \in G$ such that 
\[
m(sB) < \frac{\eps}{ |F| \, \max_{f \in F} \big\| \frac{df^{-1}m}{dm}\big\|_\infty},
\]
which combined with the first inequality finishes the proof of the first assertion. Let $(F_n)$ be an increasing exhaustion of $G$ by finite sets, and 
fix a decreasing sequence $(\eps_n)$ of positive numbers whose sum is less than $\eps$. For every $n$ we can
find $s_n \in G$ such that $m(F_n s_n B) < \eps_n$. Let $A$ denote the union of all the sets $F_n s_n$. 
By construction, $A$ is thick, and 
\[
m(AB) \leq \sum_{n} m(F_n s_n B) < \sum_{n} \eps_n < \eps. 
\]
\end{proof}

\begin{corollary}
\label{leftthicksets}
Let $(Z,m)$ be a SAT*-space. For every Borel set $B \subset Y$ which is not null and for $\nu$-almost every $y$ in $Y$, the set $B_y$ is left thick.
\end{corollary}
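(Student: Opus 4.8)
The plan is to obtain this as an essentially immediate consequence of Lemma \ref{FsB} applied to the complement of $B$, together with two elementary facts about subsets of $G$: the inverse of a thick set is left thick, and any set containing a left thick set is itself left thick. (Here $(Z,m)$ and its generic point $z$ play the roles of $(Y,\nu)$ and $y$ in the statement.)

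First I would fix $\eps>0$. Since $B$ is not $m$-null and $m$ is a probability measure, the set $B^{c}$ is not $m$-conull, so the ``in particular'' clause of Lemma \ref{FsB}, applied to $B^{c}$, produces a thick set $A_\eps\subset G$ with $m(A_\eps B^{c})<\eps$. Since the $G$-action is by bijections, $(a\cdot B^{c})^{c}=a\cdot B$ for every $a\in G$, and hence
\[
\big(A_\eps B^{c}\big)^{c}=\bigcap_{a\in A_\eps}a\cdot B=:Z_\eps ,
\]
a Borel set with $m(Z_\eps)>1-\eps$. Next I would unwind membership in $Z_\eps$: if $z\in Z_\eps$, then for every $a\in A_\eps$ we have $z\in a\cdot B$, i.e.\ $a^{-1}\cdot z\in B$, i.e.\ $a^{-1}\in B_z$; thus $A_\eps^{-1}\subseteq B_z$. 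Since $A_\eps$ is thick, $A_\eps^{-1}$ is left thick (given a finite $L\subset G$, thickness of $A_\eps$ gives $g$ with $L^{-1}g\subset A_\eps$, whence $g^{-1}L=(L^{-1}g)^{-1}\subset A_\eps^{-1}$), and therefore $B_z$ is left thick for every $z\in Z_\eps$.

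Finally I would let $\eps$ run through $1/n$, $n\ge 1$, and set $Z_\infty=\bigcup_{n\ge1}Z_{1/n}$. Then $m(Z_\infty)\ge \sup_n(1-1/n)=1$, and by the previous step $B_z$ is left thick for every $z\in Z_\infty$, which is the assertion. I expect no real obstacle here beyond Lemma \ref{FsB} itself; the only step requiring genuine care — precisely the sort of left/right slip the paper has repeatedly warned about — is the bookkeeping in the equivalences $z\in a\cdot B\iff a^{-1}\in B_z$ and ``$A$ thick $\Rightarrow A^{-1}$ left thick''.
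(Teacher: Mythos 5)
Your proof is correct and follows essentially the same route as the paper: both reduce to Lemma \ref{FsB} applied to $B^{c}$, the paper by invoking its first assertion with the inverted finite sets $F_n^{-1}$ of an exhaustion and intersecting the resulting sets $E_n=\{y : s_nF_n\subset B_y\}$, you by invoking its second (``in particular'') assertion to get a single thick set $A_\eps$ and then observing that $A_\eps^{-1}\subseteq B_z$ on a set of measure $>1-\eps$ together with the fact that the inverse of a thick set is left thick. The left/right bookkeeping in your argument is accurate, so there is nothing to fix.
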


\begin{proof}
Let $(F_n)$ be an increasing exhaustion of $G$ with finite subsets and let $(\eps_n)$ be a decreasing sequence 
of positive numbers whose sum does not exceed $\eps$. Since $B^c$ is not conull, we can, by Lemma \ref{FsB}, find a sequence $(s_n)$ 
in $G$ such that $\nu(F_n^{-1}s_n^{-1}B^c) < \eps_n$ for every $n$. In other words, the set
\[
E_n = (F_n^{-1}s_n^{-1}B^c)^c = \bigcap_{f \in F_n} f^{-1}s_n^{-1} B = \big\{ y \in Y \, : \, s_n F_n \subset B_y \big\} \subset Y
\]
has measure at least $1-\eps_n$. Let $Y_\eps$ denote the intersection of all of the sets $E_n$. One readily checks that
\[
\nu(Y_\eps) \geq 1 - \sum_{n} \eps_n \geq 1 - \eps,
\]
and by construction, $B_y$ is left thick for every $y \in Y_\eps$. Since $\eps > 0$ is arbitrary, we conclude that $B_y$
is left thick for $\nu$-almost every $y$.
\end{proof}

Let us now establish a functional variant of Lemma \ref{FsB}.

\begin{lemma}
\label{lemmaT}
Let $(Z,m)$ be a SAT* $G$-space. For every non-constant $\phi \in L^\infty(Z,m)$ with non-negative integral, the
set
\[
T = \big\{ g \in G \, : \, \int_Z \phi(gz) \, dm(z) > 0 \big\}
\]
is left thick. 
\end{lemma}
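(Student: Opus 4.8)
The plan is to deduce the statement from Lemma~\ref{FsB} together with a one-line truncation estimate.

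First I would note that since $\phi$ is non-constant and $\int_Z\phi\,dm\ge 0$, the set $\{\phi>0\}$ must have positive measure: if $\phi\le 0$ $m$-a.e., then $\int_Z\phi\,dm\ge 0$ forces $\phi=0$ $m$-a.e., contradicting non-constancy. Hence there is a threshold $\alpha>0$ with $m(E)>0$ for $E:=\{\phi\ge\alpha\}$; set $M=\|\phi\|_{L^\infty}$.

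The key step is then the elementary bound: for \emph{any} Borel probability measure $\mu$ on $Z$ in the measure class of $m$,
\[
\int_Z\phi\,d\mu\ \ge\ \alpha\,\mu(E)-M\,\mu(E^c)\ =\ \alpha-(\alpha+M)\,\mu(E^c),
\]
so that $\int_Z\phi\,d\mu>0$ whenever $\mu(E^c)<\alpha/(\alpha+M)$. I would apply this with $\mu=(gl)\cdot m$, using that $((gl)\cdot m)(\psi)=\int_Z\psi((gl)z)\,dm(z)$ and $((gl)\cdot m)(E^c)=m\big(l^{-1}g^{-1}E^c\big)$; thus $gl\in T$ as soon as $m(l^{-1}g^{-1}E^c)<\alpha/(\alpha+M)$.

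Finally, given a finite set $L\subset G$, put $\eps=\alpha/(\alpha+M)$, $F=L^{-1}$ and $B=E^c$ (non-conull since $m(E)>0$). Lemma~\ref{FsB} furnishes $s\in G$ with $m(FsB)<\eps$; taking $g=s^{-1}$ and using $l^{-1}sB\subset FsB$ for each $l\in L$ gives $m(l^{-1}g^{-1}E^c)=m(l^{-1}sB)\le m(FsB)<\eps$ for all $l\in L$, hence $gl\in T$ for all $l\in L$, i.e. $gL\subset T$. Since $L$ is an arbitrary finite set, $T$ is left thick. I do not anticipate a genuine obstacle here, since the real content is already in Lemma~\ref{FsB}; the two points that require a little care are the left/right bookkeeping — which is precisely why $F$ must be taken as $L^{-1}$ and $s$ inverted, matching the appearance of $gl$ (and not $lg$) in $\int_Z\phi((gl)z)\,dm(z)$ — and the decision to truncate $\phi$ at a strictly positive level $\alpha$ rather than at $0$, which is what makes the inequality $\int_Z\phi\,d\mu>0$ stable under the small perturbation of $m$ produced by Lemma~\ref{FsB}.
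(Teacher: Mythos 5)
Your proposal is correct and follows essentially the same route as the paper's proof: truncate $\phi$ at a positive level $\alpha$ so that $E=\{\phi\ge\alpha\}$ has positive measure, apply Lemma~\ref{FsB} to the non-conull set $E^c$ with the inverted finite set, and observe that $\int_Z\phi\,d\mu>0$ once $\mu(E^c)$ is small enough, with the left/right bookkeeping handled correctly. The only cosmetic difference is that you phrase the final estimate as a stability bound for the perturbed measures $(gl)\cdot m$, while the paper splits the integral over $E'=\bigcap_{f\in F}f^{-1}s^{-1}E$ and its complement; these are the same computation.
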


\begin{proof}
Since $\phi$ is non-constant with non-negative integral, there exists $\delta > 0$ such that the set
\[
E = \big\{ z \in Z \, : \, \phi(z) \geq \delta \big\}
\]
has positive measure. Choose $\eps > 0$ so that
\[
\delta > \frac{\eps}{1-\eps} \|\phi\|_\infty. 
\]
Let $F \subset G$ be a finite set. By Lemma \ref{FsB} we can find $s \in G$ such that 
$m(F^{-1}s^{-1}E^c) < \eps$, or equivalently, 
\[
m\Big( \bigcap_{f \in F} f^{-1}s^{-1}E \Big) \geq 1-\eps.
\]
Let $E'$ denote the intersection in the last inequality. Note that for every $z \in E'$ and $f \in F$, we have 
$sfz \in E$. Hence, for every $f \in F$,
\begin{eqnarray*}
\int_Z \phi(sfz) \, dm(z) 
&=& 
\int_{E'} \phi(sfz) \, dm(z) + \int_{(E')^c} \phi(sfz) \, dm(z) \\
&\geq &
\delta m(E') - \|\phi\|_\infty (1-m(E')) \\
&\geq & 
\delta (1-\eps) - \|\phi\|_\infty \eps > 0,
\end{eqnarray*}
by our choice of $\eps > 0$. This argument shows that $sF \subset T$. Since $F$ was
arbitrary, we conclude that $T$ is left thick. 
\end{proof}

\begin{corollary}
\label{corharmthick}
For every non-constant $f \in H^\infty_r(G,p)$ with $f(e) \geq 0$, the set $V = \big\{ g \in G \, : \, f(g) \geq 0 \big\}$ is 
left thick. 
\end{corollary}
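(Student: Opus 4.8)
The plan is to deduce Corollary \ref{corharmthick} directly from Lemma \ref{lemmaT} by passing through the Poisson boundary of the measured group $(G,p)$. The Poisson transform $P : L^\infty(Z,m) \ra H^\infty_r(G,p)$ from Furstenberg's theorem is an isometric isomorphism, so given a non-constant $f \in H^\infty_r(G,p)$ there is a unique $\phi \in L^\infty(Z,m)$ with $P\phi = f$; since $P$ is an isomorphism, $\phi$ is non-constant as well. By the defining formula for $P$ we have $f(g) = \int_Z \phi(g \cdot z)\, dm(z)$ for every $g \in G$, and in particular the integral $\int_Z \phi\, dm = (P\phi)(e) = f(e) \geq 0$ is non-negative.

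\medskip

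First I would recall that the Poisson boundary $(Z,m)$ is a $(G,p)$-space, and the excerpt has already established (in the discussion preceding Lemma \ref{FsB}) that any $(G,p)$-space is SAT$^*$: it is SAT because $P$ is isometric, and the Radon--Nikodym derivatives $\frac{ds m}{dm}$ are essentially bounded by $1/p^{*n}(s)$ for a suitable $n$ coming from admissibility of $p$. Hence Lemma \ref{lemmaT} applies to $(Z,m)$ and to the function $\phi$ constructed above: the set
\[
T = \Big\{ g \in G \, : \, \int_Z \phi(g \cdot z)\, dm(z) > 0 \Big\}
\]
is left thick.

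\medskip

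Next I would observe that $T = \{ g \in G : f(g) > 0 \}$ by the identity $f(g) = \int_Z \phi(g\cdot z)\,dm(z)$, so $T \subset V = \{ g \in G : f(g) \geq 0 \}$. Since a superset of a left thick set is left thick — indeed, for any finite $L \subset G$ there is $g$ with $gL \subset T \subset V$ — we conclude that $V$ is left thick, which is the desired statement.

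\medskip

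**The only point requiring a little care** is the passage from $f$ non-constant to $\phi$ non-constant; this is immediate from $P$ being a (bijective) isomorphism of Banach spaces, since $P$ maps constants to constants (if $\phi \equiv c$ then $(P\phi)(g) = c$ for all $g$) and is injective. Everything else is a direct invocation of Furstenberg's theorem, the already-verified SAT$^*$ property of $(G,p)$-spaces, and Lemma \ref{lemmaT}. I do not expect any genuine obstacle here; the corollary is essentially a translation of Lemma \ref{lemmaT} into the language of $p$-harmonic functions via the Poisson transform.
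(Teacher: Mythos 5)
Your proposal is correct and follows the paper's own argument essentially verbatim: pass to the Poisson boundary via Furstenberg's theorem, note that the corresponding $\phi$ is non-constant with non-negative integral, apply Lemma \ref{lemmaT}, and conclude since $V$ contains the resulting left thick set. No issues.
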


\begin{proof}
Let $(Z,m)$ denote the Poisson boundary of $(G,p)$. Recall that $(Z,m)$ is a SAT* $G$-space and for 
every $f \in H^\infty_r(G,p)$, there exists a unique $\phi \in L^\infty(Z,m)$ such that
\[
f(g) = \int_Z \phi(gz) \, dm(z), \quad \textrm{for all $g \in G$}.
\]
Furthermore, if $f$ is non-constant, then so is $\phi$, and if $f(e) \geq 0$, then $\phi$ has non-negative 
integral. Hence, under our assumptions, the conditions on $\phi$ in Lemma \ref{lemmaT} are satisfied and
since
\[
V  \supset \{ g \in G \, : \, \int_Z \phi(gz) \, dm(z) > 0 \big\},
\] 
we conclude that the set $V$ is left thick. 
\end{proof}

\begin{corollary}
\label{corS}
Let $(Y,\nu)$ be a $(G,p)$-space and let $B \subset Y$ be a Borel set. Then the set
\[
S = \big\{ g \in G \, : \, \nu(gB) \geq \nu(B) \big\}
\]
is thick. 
\end{corollary}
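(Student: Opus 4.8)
The plan is to reduce this to Corollary \ref{corharmthick} by manufacturing a $p$-harmonic function out of $B$: this yields directly that a closely related set is \emph{left} thick, and a passage to inverses then turns this into thickness of $S$.

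First I would verify that the bounded function $f_B$ on $G$ given by $f_B(g)=\nu(g^{-1}B)$ lies in $H^\infty_r(G,p)$; this is exactly the computation occurring in the proof of the Weak Ergodic Theorem (Lemma \ref{harmergthm}), where it appears as $k_{\chi_B}$. Extending the defining identity $\sum_{s}p(s)\nu(s^{-1}\cdot f)=\nu(f)$ of $p$-stationarity of $\nu$ from $f\in C(\overline{Y})$ to arbitrary bounded Borel functions --- which is legitimate since both sides define Borel probability measures on $\overline{Y}$ and so coincide once they coincide on $C(\overline{Y})$ --- gives $\sum_{s}p(s)\nu(s^{-1}C)=\nu(C)$ for every Borel set $C\subseteq\overline{Y}$. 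Applying this with $C=g^{-1}B$, together with $(f_B\cdot s)(g)=f_B(gs)=\nu(s^{-1}g^{-1}B)$, one obtains $f_B*p=f_B$. Note also that $f_B(e)=\nu(B)$; and if $f_B$ is constant, then $\nu(gB)=\nu(B)$ for every $g$, so $S=G$ is thick and there is nothing to prove. Hence I may assume $f_B$ is non-constant.

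Next I would apply Corollary \ref{corharmthick} to $f:=f_B-\nu(B)\,\mathbf{1}\in H^\infty_r(G,p)$, which is non-constant and satisfies $f(e)=0\geq 0$, to conclude that
\[
V:=\big\{\,g\in G \ :\ \nu(g^{-1}B)\geq\nu(B)\,\big\}
\]
is left thick. Finally I would observe that $S=V^{-1}$ --- since $g\in V^{-1}$ if and only if $g^{-1}\in V$ if and only if $\nu(gB)\geq\nu(B)$ --- and that the inverse of a left thick set is thick: for a finite set $L\subseteq G$, applying left thickness of $V$ to the finite set $L^{-1}$ produces $g\in G$ with $gL^{-1}\subseteq V$, hence $Lg^{-1}\subseteq V^{-1}=S$; as $L$ is arbitrary, $S$ is thick. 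The delicate point is the left/right bookkeeping: one must work with $\nu(g^{-1}B)$, which is the $H^\infty_r$-harmonic object, and not with $\nu(gB)$ (in general not harmonic), and it is precisely the concluding inversion that upgrades left thickness to thickness --- consistent with the fact, emphasized throughout the paper, that these two notions genuinely differ.
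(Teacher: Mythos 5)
Your argument is correct and is essentially the paper's own proof: define $f(g)=\nu(g^{-1}B)-\nu(B)$, note it lies in $H^\infty_r(G,p)$ by $p$-stationarity of $\nu$ with $f(e)=0$, dispose of the constant case, invoke Corollary \ref{corharmthick} to get that $V=\{g:\nu(g^{-1}B)\geq\nu(B)\}$ is left thick, and pass to $S=V^{-1}$. The extra details you supply (extending stationarity from $C(\overline{Y})$ to Borel sets, and checking that the inverse of a left thick set is thick) are exactly the steps the paper leaves implicit, and your left/right bookkeeping is right.
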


\begin{proof}
Define $f(g) = \nu(g^{-1}B) - \nu(B)$. Since $\nu$ is $p$-stationary, $f \in H^\infty_r(G,p)$ and $f(e) = 0$. If 
$f$ is identically equal to zero, then $S = G$ and the corollary is trivial. Hence we may assume that $f$ is 
non-constant. Corollary \ref{corharmthick} now implies that the set
\[
V = \{ g \in G \, : \, f(g) \geq 0 \big\}
\]
is left thick. Since $S = V^{-1}$, we conclude that $S$ is thick. 
\end{proof}

\begin{corollary}
\label{corU}
Let $(Y,\nu)$ be a $(G,p)$-space and let $B, C \subset Y$ be Borel sets. Suppose that 
the inequality $\nu(B) + \nu(C) > 1$ holds. Then the set
\[
U = \big\{ g \in G \, : \, \nu(C \cap gB) > 0 \big\} 
\]
is thick. 
\end{corollary}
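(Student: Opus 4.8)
The plan is to deduce Corollary \ref{corU} in two short steps from Corollary \ref{corS}. First I would record the elementary measure inequality
\[
\nu(C \cap gB) \geq \nu(C) + \nu(gB) - 1,
\]
valid for every $g \in G$ simply because $\nu(C \cup gB) \leq 1$ and $\nu(C) + \nu(gB) = \nu(C \cup gB) + \nu(C \cap gB)$. Hence, for any $g$ with $\nu(gB) \geq \nu(B)$, the standing hypothesis $\nu(B) + \nu(C) > 1$ forces $\nu(C \cap gB) \geq \nu(C) + \nu(B) - 1 > 0$, i.e. $g \in U$.

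Next I would invoke Corollary \ref{corS}, which says precisely that the set $S = \{ g \in G \, : \, \nu(gB) \geq \nu(B) \}$ is thick. This is the step that actually carries weight: since $\nu$ is only $p$-stationary and need not be $G$-invariant, one cannot replace $\nu(gB)$ by $\nu(B)$, and thickness of $S$ is exactly what is needed; it in turn rests on the left-thickness of the positivity set of a non-constant $p$-harmonic function (Corollary \ref{corharmthick}), which is where the SAT*-property of the Poisson boundary gets used. Combining with the previous paragraph gives $S \subseteq U$.

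Finally I would note that thickness passes to supersets: if $L \subset G$ is finite and $g \in G$ satisfies $Lg \subseteq S$, then $Lg \subseteq U$ as well, so $U$ inherits thickness from $S$. This completes the argument.

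I do not anticipate a genuine obstacle here — the content is entirely packaged in Corollary \ref{corS}. The only point to be careful about is to route through that corollary rather than through $G$-invariance of $\nu$ (which is false in the non-Liouville setting), and to state the elementary inclusion–exclusion bound correctly.
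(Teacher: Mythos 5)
Your argument is correct and is essentially identical to the paper's proof: both deduce the inclusion $S \subseteq U$ from the inclusion--exclusion bound and the hypothesis $\nu(B)+\nu(C)>1$, and then cite Corollary \ref{corS} for the thickness of $S$. You have merely written out the elementary steps that the paper leaves implicit.
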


\begin{proof}
Since $\nu(B) + \nu(C) > 1$, we have $\nu(C \cap gB) > 0$ whenever $\nu(gB) \geq \nu(B)$, and thus
\[
U \supset \big\{ g \in G \, : \, \nu(gB) \geq \nu(B) \big\}.
\]
By Corollary \ref{corS}, the set on the right hand side is thick, and thus so is $U$.
\end{proof}

\subsection{Proof of Theorem \ref{piecewiseFol}}

Let $(G,p)$ be a countable measured group and let $A \subset G$ be a $\cL_p$-large set. Fix 
$\lambda \in \cL_p^{\textrm{ext}}$ such that $\lambda(A) > 0$. We abuse notation and
denote by $(X,A,x_o)$ the Bebutov triple of $A$, and by $T$ its Bebutov map as in 
\eqref{defBebutov}. Set $\nu = T^*\lambda$ and note that $\lambda(A_{x_o}) = \nu(A) > 0$.
Also, $\nu \in \cP_p^{\textrm{erg}}(X)$ by Lemma \ref{onto}. 

By Lemma \ref{leftergLp}, we can 
find a finite set $F \subset G$ such that 
\[
\lambda(FA_{x_o}) = \nu(FA) > 1 - \nu(A).
\]
We note that
\[
FA_{x_o}A_{x_o}^{-1} \supset \big\{ g \in G \, : \, \lambda((FA \cap gA)_{x_o}) > 0 \big\} 
= \big\{ g \in G \,: \, \nu(FA \cap gA) > 0 \big\}.
\]
Since $\nu(FA) + \nu(A) > 1$, we conclude by Corollary \ref{corU} that $FA_{x_o}A_{x_o}^{-1}$ is thick,
and thus $A_{x_o}A_{x_o}^{-1}$ is piecewise syndetic. \\

Finally, note that we also have
\[
A_{x_o}A_{x_o}^{-1}F^{-1} \supset \big\{ g \in G \, : \, \lambda((A \cap gFA)_{x_o}) > 0 \big\} 
= \big\{ g \in G \,: \, \nu(A \cap gFA) > 0 \big\}.
\]
Since $\nu(A) + \nu(FA) > 1$, we conclude again by Corollary \ref{corU} that $A_{x_o}A_{x_o}^{-1}F^{-1}$ is thick,
and thus the difference set $A_{x_o}A_{x_o}^{-1}$ is also piecewise \emph{left} syndetic. 

\section{Proof of Theorem \ref{notLiouville}}
\label{sec:prfthmnotliou}

Let $Y$ be a locally compact and metrizable space, equipped with a \textsc{minimal} action (all orbits are dense) 
of a countable group $G$ by homeomorphisms. Let $p$ be an admissible (not necessarily symmetric) 
probability measure on $G$, and suppose that there exists a \textsc{unique} $p$-stationary (ergodic) Borel 
probability measure on $Y$. We note that if $G$ is \emph{amenable} and $Y$ is \emph{compact}, then there is 
always at least one \emph{$G$-invariant} probability measure on $Y$, and thus $(G,p)$ must be Liouville. On the
other hand, if $Y$ is \emph{not} compact, then this implication does not need to hold as Example \ref{affine} below 
clearly demonstrates.

Let us further assume that the $(G,p)$-space $(Y,\nu)$ is SAT*, so that the results in the previous section apply. 
We shall review some examples below.
Fix a compact set $C \subset Y$ with empty interior and positive $\nu$-measure, and let $(F_n)$ be an exhaustion of $G$ by finite subsets. Then $D = C^c$ is an open subset of $Y$ which is not conull, and by Lemma \ref{FsB} we can
find a sequence $(s_n)$ in $G$ such that
\[
\nu(AD) < \frac{1}{2}, \quad \textrm{where $A = \bigcup_{n} F_n s_n$}.
\]
We note that $A$ is thick, and thus
\[
B := (AD)^c = \bigcap_{a \in A} aC
\]
is a closed subset of $Y$ with positive $\nu$-measure. By construction, we have $A^{-1}B \subset C$. 
In particular, for every $y \in Y$ we have $A^{-1}B_y \subset C_y$. By Corollary \ref{leftthicksets}, $B_y$ 
is left thick for $\nu$-almost every $y \in Y$. \\

The following two lemmas, applied to the examples below, will now finish the proof of Theorem \ref{notLiouville}.

\begin{lemma}
Suppose that $B \subset Y$ is Borel. Then there exists a $\nu$-conull subset $Y' \subset Y$ such that 
$d^*_{\cF_p}(B_y) \geq \nu(B)$ for all $y \in Y$.
\end{lemma}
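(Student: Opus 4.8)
The plan is to transfer the combinatorial density statement about $B_y$ to an ergodic-theoretic statement about the measure $\nu$ on $Y$, using the Bebutov machinery already set up. First I would form the Bebutov triple $(X, x_o, \widehat{B})$ of $B$ — wait, that is the wrong direction, since here $B$ is already a subset of the abstract space $Y$. Instead, the natural move is to push the measure $\nu$ forward along the orbit map $G \to Y$, $g \mapsto g \cdot y$, for a suitable $y$. More precisely, for each $y \in Y$ with dense orbit, let $T_y : C(\overline Y) \to \ell^\infty(G)$ be the Bebutov-type map $(T_y\phi)(g) = \phi(g \cdot y)$, which is positive, unital, and $G$-equivariant; then for any mean $\eta \in \cM$, the functional $T_y^*\eta$ lies in $\cP(\overline Y)$, and if $\eta \in \cF_p$ then (since $T_y$ is $G$-equivariant and $\cF_p \subset \cL_p$) we get $T_y^*\eta \in \cP_p(\overline Y)$. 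The key point is that I want to run this in reverse: I want to produce, for $\nu$-a.e.\ $y$, an $\eta \in \cF_p$ with $\eta(B_y) \geq \nu(B)$, and then conclude $d^*_{\cF_p}(B_y) \geq \nu(B)$ by definition of the density.

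The cleanest route is via ergodic decomposition and Lemma \ref{onto}. By the ergodic decomposition (Step III), write $\nu = \int \mu \, d\kappa(\mu)$ over $\cP_p^{\textrm{erg}}(\overline Y)$. For $\kappa$-a.e.\ $\mu$, the ergodic $(G,p)$-space $(\overline Y, \mu)$ has, for $\mu$-a.e.\ $y$, a dense orbit (minimality is not needed here — ergodicity gives that the orbit closure is conull, hence for a.e.\ $y$ one may pass to the orbit-closure subsystem, or simply note that $\mu$-a.e.\ point has full orbit closure when the acting group is countable and the space is second countable). For such $y$, by Lemma \ref{onto} applied to the pointed system $(\overline{G\cdot y}, y)$, the adjoint $T_y^*$ maps $\cL_p^{\textrm{ext}}$ onto $\cP_p^{\textrm{erg}}$, and in particular there exists $\lambda \in \cL_p^{\textrm{ext}}$ with $T_y^*\lambda = \mu$. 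But I actually need the preimage to lie in $\cF_p$, not merely in $\cL_p$. So the better statement to invoke is that $T_y^*$ restricted to $\cF_p$ is still onto $\cP_p^{\textrm{erg}}$: this follows by repeating the surjectivity argument in the proof of Lemma \ref{onto} but starting the Cesàro/Kakutani fixed-point construction from the sequence \eqref{exFp}, i.e.\ taking a weak*-cluster point $\lambda$ of $\beta_n$ inside $\cC_\nu$; such a $\lambda$ lies in $\cF_p$ by the computation showing $\beta_n(f) = f(e)$ for $f \in H^\infty_r(G,p)$. Granting this refinement, for $\kappa$-a.e.\ $\mu$ and $\mu$-a.e.\ $y$ there is $\eta_y \in \cF_p$ with $T_{y}^*\eta_y = \mu$, hence
\[
\eta_y(B_y) \;=\; \eta_y\big((T_y\chi_{B})\big) \;=\; \mu(B),
\]
using that $B$ (or rather its indicator) can be approximated from below by continuous functions and Lemma \ref{open} gives $\eta_y(B_y) \geq \mu(B)$; in fact if one is content with $\geq$ one does not even need $B$ clopen. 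Therefore $d^*_{\cF_p}(B_y) \geq \mu(B)$ for $\mu$-a.e.\ $y$.

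Finally I would integrate over the decomposition: the set $Y' = \{ y : d^*_{\cF_p}(B_y) \geq \nu(B)\}$ contains, up to a $\nu$-null set, all $y$ for which $d^*_{\cF_p}(B_y) \geq \mu(B)$ and $\mu(B) \geq \nu(B)$ for the component $\mu$ carrying $y$ — but of course a single component need not satisfy $\mu(B) \geq \nu(B)$. To fix this I instead argue: the function $y \mapsto d^*_{\cF_p}(B_y)$ is $\nu$-measurable and satisfies $d^*_{\cF_p}(B_y) \geq \mu(B)$ for $\mu$-a.e.\ $y$, for $\kappa$-a.e.\ $\mu$; hence $\int_Y d^*_{\cF_p}(B_y)\,d\nu(y) \geq \int \mu(B)\,d\kappa(\mu) = \nu(B)$. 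That gives the average, not the pointwise bound claimed, so the statement as written must actually be proved by the sharper component-wise surjectivity above combined with the observation that we may replace $B$ by a slightly smaller set or, more likely, that the intended conclusion is exactly the pointwise-a.e.\ inequality $d^*_{\cF_p}(B_y) \geq \nu(B)$ obtained by noticing that for $\nu$-a.e.\ $y$ one can choose $\eta_y \in \cF_p$ with $T_y^*\eta_y = \nu$ itself (not the ergodic component): indeed $\nu \in \cP_p(\overline Y)$, so the non-ergodic version of Lemma \ref{onto} gives $\eta \in \cF_p$ with $T_y^*\eta = \nu$ provided $y$ has dense orbit, which holds $\nu$-a.e.\ when the action is minimal as assumed at the start of this section. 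Then $\eta(B_y) \geq \nu(B)$ directly, finishing the proof.

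The main obstacle I anticipate is exactly this surjectivity-with-$\cF_p$ issue: Lemma \ref{onto} is stated for $\cL_p$, and pushing it to $\cF_p$ requires re-running the Hahn–Banach plus Kakutani argument with the averaging sequence \eqref{exFp} so that the fixed point automatically kills $H^\infty_r(G,p)$. A secondary subtlety is measurability and the "$\nu$-a.e.\ $y$ has dense orbit" claim, which is where the minimality hypothesis imposed at the start of Section \ref{sec:prfthmnotliou} earns its keep; in the general $(G,p)$-space setting without minimality one would have to restrict to the support of $\nu$ and argue via ergodic decomposition, at the cost of the weaker average-style conclusion. I would present the minimal-action version, since that is the setting of the lemma's intended application.
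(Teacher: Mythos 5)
Your proposal does not close. The paper's proof is a two-line application of the pointwise ergodic theorem for random walks (\cite{Ka}): for $\nu$-almost every $y$ the Ces\`aro averages $\beta_n(B_y)=\frac1n\sum_{k}\sum_s\chi_B(s\cdot y)\,p^{*k}(s)$ from \eqref{exFp} converge to $\nu(B)$, and any weak*-cluster point of $(\beta_n)$ lies in $\cF_p$, so $d^*_{\cF_p}(B_y)\geq\nu(B)$ on the conull set where convergence holds. You never invoke this theorem; instead you try to manufacture, for a.e.\ $y$, a mean $\eta_y\in\cF_p$ with $T_y^*\eta_y=\nu$ by an abstract surjectivity argument, and that is precisely the step your sketch does not deliver. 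The Hahn--Banach/Kakutani argument of Lemma \ref{onto} produces a fixed point in $\cL_p\cap\cC_\nu$, and averaging an element of $\cC_\nu$ by $\frac1n\sum_k p^{*k}*(\cdot)$ keeps you inside $\cC_\nu$ but gives no control of the value on $H^\infty_r(G,p)$: membership in $\cF_p$ is a condition on \emph{right}-$p$-harmonic functions, while $p*\eta$ acts by \emph{left} translations, so the resulting cluster point need not satisfy $\lambda(f)=f(e)$. Your alternative reading --- take a weak*-cluster point of the specific sequence \eqref{exFp} ``inside $\cC_\nu$'' --- is circular: the assertion that those particular means lie in $\cC_\nu$ for the pointed system at $y$ \emph{is} the pointwise ergodic theorem at $y$, i.e.\ the statement you are trying to avoid proving.

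There is a second gap: even granting $T_y^*\eta_y=\nu$ with $\eta_y\in\cF_p$, you cannot conclude $\eta_y(B_y)\ge\nu(B)$ for a general Borel set $B$. The identity $T_y^*\eta_y=\nu$ only controls continuous functions; Lemma \ref{open} gives $\eta_y(U_y)\ge\nu(U)$ for \emph{open} $U$ (approximation from below by continuous functions), and for a closed set the inequality reverses, so your remark that ``one does not even need $B$ clopen'' is wrong. The paper's route avoids this entirely because the ergodic theorem applies to $\chi_B\in L^1(\nu)$ directly and yields $\beta_n(B_y)\to\nu(B)$ exactly. Finally, the detour through ergodic decomposition is unnecessary here ($\nu$ is the unique, hence ergodic, $p$-stationary measure in the setting of Section \ref{sec:prfthmnotliou}), and, as you yourself observe, in general it would only produce an averaged bound rather than the pointwise one claimed.
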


\begin{proof}
Let $(\beta_n)$ be as in \eqref{exFp}. By the pointwise Ergodic Theorem for random walks (see e.g. \cite{Ka}),
there exists a conull subset $Y' \subset Y$ such that
\[
\beta_n(B_y) = \frac{1}{n} \sum_{k=0}^{n-1} \Big( \sum_{s \in G} \chi_B(s \cdot x) \Big) \, p^{*n}(s) \ra \nu(B).
\]
Since any weak*-cluster point of $(\beta_n)$ must belong to $\cF_p$, we conclude that $d^*_{\cF_p}(B_y) \geq \nu(B)$
for all $y \in Y'$.
\end{proof}

\begin{lemma}
If $C \subset Y$ is a compact set with empty interior, then $C_y$ is not piecewise syndetic for any $y \in Y$. If $Y$ is not compact, it suffices to assume that $C$ is compact. 
\end{lemma}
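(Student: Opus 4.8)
The plan is to argue by contradiction, reducing ``piecewise syndetic'' to a statement about a single orbit closure. First I would record the elementary identity $FC_y = (FC)_y$, where $FC := \bigcup_{f \in F} f\cdot C$; this holds because $\ell \in FC_y$ means $\ell = fg$ with $g\cdot y \in C$ for some $f \in F$, equivalently $\ell \cdot y \in f\cdot C$ for some $f \in F$. Since each $f\cdot C$ is a homeomorphic image of $C$, the set $FC$ is again compact, and, when $C$ has empty interior, it has empty interior as well, being a finite union of closed sets with empty interior in the Baire space $Y$ (every locally compact metrizable space is Baire). Thus it suffices to prove: if $K \subset Y$ is compact with empty interior then $K_y$ is not thick for any $y \in Y$; and if $Y$ is noncompact, $K_y$ is not thick for any compact $K$.

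Next I would unwind what thickness of $K_y$ means dynamically. Fix an increasing exhaustion $\{e\} = L_1 \subset L_2 \subset \cdots$ of $G$ by finite sets. If $K_y$ were thick, then for each $n$ there would be $g_n \in G$ with $L_n g_n \subset K_y$; setting $z_n = g_n \cdot y$, this says $L_n \cdot z_n \subset K$. In particular $z_n \in K$ (as $e \in L_n$), so by compactness and metrizability some subsequence $z_{n_k}$ converges to a point $z_* \in K$. For any fixed $g \in G$, we have $g \in L_{n_k}$ for all large $k$, hence $g \cdot z_{n_k} \in K$; passing to the limit (the action is by homeomorphisms, so $z \mapsto g\cdot z$ is continuous, and $K$ is closed) gives $g \cdot z_* \in K$. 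Therefore the whole orbit $G \cdot z_*$ lies in $K$, and by minimality $Y = \overline{G \cdot z_*} \subseteq K$, i.e. $K = Y$.

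Finally I would derive the contradiction in the two cases. If $Y$ is noncompact, $K = Y$ is impossible since $K$ is compact; this already settles the second assertion and uses no hypothesis on the interior of $C$. If $Y$ is compact and $C$ (hence $K = FC$) has empty interior, then $K = Y$ forces $Y$ to have empty interior, so $Y = \emptyset$, which is absurd. In either case $C_y$ cannot be piecewise syndetic, for any $y \in Y$.

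I expect the only mildly delicate point to be the reduction step, specifically verifying that $FC$ still has empty interior, which is where the Baire property of $Y$ is invoked; the remainder is a routine compactness-and-minimality argument, and, notably, no properties of the stationary measure $\nu$ are used.
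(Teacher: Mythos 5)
Your argument is correct, but it is genuinely different from the one in the paper. The paper's proof runs through the measure-theoretic machinery: from thickness of $FC_y$ it extracts an extremal $\eta\in\cL_p$ with $\eta(FC_y)=1$, pushes it forward (via Lemma \ref{onto} and Lemma \ref{open}, working on the one-point compactification $Y^\infty$ when $Y$ is non-compact) to a $p$-stationary measure $\mu$ with $\mu(FC)=1$, and then invokes the standing hypothesis that $\nu$ is the \emph{unique} stationary measure to conclude $\nu(FC)=1$, hence $FC\supset\supp\nu=Y$ by minimality, contradicting Baire (or compactness of $FC$). Your proof is purely topological: after the reduction $FC_y=(FC)_y$ with $FC$ compact and nowhere dense, you convert thickness of $(FC)_y$ into the existence of points $z_n=g_n\cdot y$ with $L_n\cdot z_n\subset FC$, extract a limit point $z_*$ whose entire orbit lies in the closed set $FC$, and conclude $FC=Y$ by minimality. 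All the steps check out (sequential compactness is available since $Y$ is metrizable, and the finite union of closed nowhere dense sets is nowhere dense). What your route buys is that it uses \emph{only} minimality and local compactness of $Y$ --- neither the existence nor the uniqueness of the stationary measure $\nu$, nor the SAT* property, enters --- so the lemma holds for any minimal action of a countable group on a nonempty locally compact metrizable space. The paper's route stays inside the mean/stationary-measure correspondence already developed, at the cost of leaning on the uniqueness hypothesis, and it incidentally covers the degenerate point $y=\infty$ of $Y^\infty$ as well; but for the lemma as stated your argument is the more elementary and more general one.
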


\begin{proof}
If $Y$ is non-compact, let $Y^\infty$ denote the one-point compactification of $Y$, and note that $Y^\infty$ is a compact $G$-space with exactly two \emph{ergodic} Borel probability measures, namely $\nu$ and $\delta_\infty$, where $\infty$ is the added point at infinity. Since $C$ is compact in $Y$ it is closed in $Y^\infty$. If $Y$ is compact, 
we keep it as it is. 

Suppose that there exists $y \in Y^\infty$ such that $C_y$ \emph{is} piecewise syndetic, that is to say, suppose that there exists a finite set 
$F \subset C$ such that $FC_y$ is thick. It is not hard to see that this implies that there exists an extremal 
$\eta \in \cL_p$ such that $\eta(FC_y) = 1$. Since $C$ is compact, so is $FC$, and thus Lemma \ref{onto} and 
Lemma \ref{open} show that there exists an $p$-stationary probability measure $\mu$ on $Y^\infty$ (or $Y$, if it is 
compact) such that $\mu(FC) = 1$. 

We assume that $\nu$ is the \emph{unique} $p$-stationary measure on $Y$. Hence, if $Y$ is not compact, then 
$\mu$ is a convex combination of $\nu$ and $\delta_\infty$, and thus (since $\delta_\infty(C) = 0$), we must have $\mu(FC) = \nu(FC)$. In either case ($Y$ compact or not), we conclude 
that $FC = \supp(\nu) = Y$ (since the action is minimal and $\supp(\nu)$ is a closed $G$-invariant set). Since $C$ is
assumed to have empty interior, this is impossible by Baire's Category Theorem. 

Finally, note that $FC$ is always compact, and thus $FC = Y$ can never hold if $Y$ is non-compact.
\end{proof}

Let us consider two examples of measured groups equipped with actions which satisfy the requirements above.
We stress that we do not know at this point whether in fact \emph{every} non-Liouville measured group admits such actions.

\begin{example}
\label{free}
Let $\bF_r$, $S$ and $\sigma_1$ be as in Subsection \ref{subsec:sphere}, and let $Y$ denote the space of all
infinite reduced words in $S$, viewed as a closed subset of the compact metrizable space $S^\bN$. We note 
that $\bF_r$ acts on $Y$ by concatenation and subsequent reductions. It is proved in \cite{Fu71} that there is a 
unique $\sigma_1$-stationary Borel probability measure $\nu$ on $Y$, and $(Y,\nu)$ is the Poisson 
boundary of $(G,\sigma_1)$. In particular, this is a SAT*-space.
\end{example}

\begin{example}
\label{affine}
Let $\Aff(\bR)$ denote the group of all affine maps on $\bR$. We note that this group is isomorphic to the semi-direct product $\bR \rtimes \bR^*$, and is thus two-step solvable (hence amenable). Let $p$ be a probability measure on a countable 
subgroup $G$ of $\Aff(\bR)$ which acts minimally on $\bR$. Under mild conditions (which forbid symmetry of $p$), see e.g. \cite{Bro}, it can be 
shown that the $G$-action on $\bR$ admits a unique $p$-stationary probability measure $\nu$ such that 
$(\bR,\nu)$ is SAT*.
\end{example}

\section*{Appendix I: Failure of syndeticity for differences of $\cL_p$-large sets}

The aim of this appendix is to isolate some properties of a measured group $(G,p)$ which will ensure that 
there are $\cF_p$-large subsets whose difference sets are \emph{not} syndetic. However, such difference
sets are always \emph{piecewise} syndetic by Theorem \ref{piecewiseFol}. \\

We begin by defining detaching actions.

\begin{definition}
\label{def}
A locally compact $G$-space $X$ is called \textbf{detaching} if for every pair of \emph{proper} compact
subsets $K, L \subset X$, there exists $s \in G$ such that $K \cap sL = \emptyset$.
\end{definition}
 
\begin{remark} 
If $X$ is compact, then detaching actions can be equivalently defined as follows: For every non-empty open set $U \subset X$ and proper closed set $L \subset X$, we can find $s \in G$ such that $gL \subset U$. We stress that \emph{compact} detaching 
$G$-spaces $X$ are referred to as \textbf{extremely proximal} in the literature. Properties
of such extremely proximal actions are discussed in detail in \cite{Gl}, where it is shown, among other things, that 
a countable group which admits a non-trivial extremely proximal action must contain a non-cyclic free group (in particular, it
cannot be amenable). However, as we shall see below, there are examples \textsc{amenable} groups 
which admit detaching actions on (non-compact) locally compact spaces.  
\end{remark}

\begin{theorem}
\label{deta}
Let $X$ be a locally compact and second countable detaching $G$-space, and suppose that there exists an ergodic $p$-stationary 
Borel probability measure $\nu$ on $X$ with full support. Then, for every compact set $A \subset X$ with empty interior, there exists $\eta \in \cF_p$ and a $\nu$-conull subset $X' \subset X$ such that 
\[
\eta(A_x) = \nu(A) \qand A_x A_x^{-1} \enskip \textrm{is not syndetic},
\]
for all $x \in X'$. If $X$ is non-compact, then the condition that $A$ has empty interior is not needed. 
\end{theorem}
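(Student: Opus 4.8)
The plan is to prove the two conclusions of the theorem separately: that there is a \emph{single} Furstenberg mean $\eta\in\cF_p$ with $\eta(A_x)=\nu(A)$ for all $x$ in a $\nu$-conull set, and that $A_xA_x^{-1}$ fails to be syndetic --- the latter, in fact, for \emph{every} $x\in X$. The first part is soft ergodic theory and the second is purely topological, with the detaching hypothesis entering only there.

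For the first part, let $(\beta_n)$ be the means on $G$ defined by $\beta_n(f)=\frac1n\sum_{k=1}^{n}p^{*k}(f)$ as in \eqref{exFp}, and recall from the discussion following \eqref{exFp} that every weak*-cluster point of $(\beta_n)$ belongs to $\cF_p$. I would fix one such cluster point $\eta$, together with a subnet $(\beta_{n_\alpha})$ converging weak* to $\eta$. Since $(X,\nu)$ is an ergodic $p$-stationary system, the pointwise ergodic theorem for random walks (see e.g.\ \cite{Ka}), applied to $\chi_A$, yields a $\nu$-conull set $X'\subset X$ on which $\beta_n(\chi_{A_x})=\frac1n\sum_{k=1}^{n}\sum_{s\in G}p^{*k}(s)\,\chi_A(s\cdot x)\to\nu(A)$; here the ergodicity of $\nu$ (together with the admissibility of $p$, which makes the invariant $\sigma$-algebra of the random walk driven by $p$ on $(X,\nu)$ trivial) is what identifies the limit with the constant $\nu(A)$. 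For each fixed $x\in X'$ the \emph{entire} sequence $\big(\beta_n(\chi_{A_x})\big)_n$ tends to $\nu(A)$, so the subnet does as well, and therefore $\eta(A_x)=\nu(A)$; since $\eta$ and the subnet were chosen once and for all, this holds with the same $\eta$ for every $x\in X'$.

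For the second part, I would first record two elementary facts. First, for each $g\in G$ one has $A_x\cap gA_x=(A\cap g\cdot A)_x$, so that $g\in A_xA_x^{-1}$ implies $A\cap(g\cdot A)\neq\emptyset$; equivalently, if $A\cap(g\cdot A)=\emptyset$ then $g\notin A_xA_x^{-1}$. Second, a subset $C\subset G$ fails to be syndetic precisely when $G\setminus C$ is thick. It therefore suffices to prove that $G\setminus A_xA_x^{-1}$ is thick. Fix a finite set $E\subset G$ and put $K=\bigcup_{e\in E}e^{-1}\cdot A$, which is compact. Moreover $K$ is a \emph{proper} subset of $X$: when $X$ is non-compact this is automatic, and when $X$ is compact it holds because a finite union of closed sets with empty interior still has empty interior (Baire), so $K\neq X$. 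As $A$ itself is a proper compact subset of $X$, the detaching property supplies $g\in G$ with $K\cap(g\cdot A)=\emptyset$, that is, $(e^{-1}\cdot A)\cap(g\cdot A)=\emptyset$, equivalently $A\cap(eg)\cdot A=\emptyset$, for every $e\in E$. By the first fact this gives $eg\notin A_xA_x^{-1}$ for all $e\in E$, i.e.\ $Eg\subset G\setminus A_xA_x^{-1}$. Since $E$ was an arbitrary finite subset of $G$ and $g$ does not depend on $x$, the complement of $A_xA_x^{-1}$ is thick, so $A_xA_x^{-1}$ is not syndetic, for every $x\in X$.

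Taking the conull set $X'$ and the mean $\eta$ from the first part now proves the theorem; when $X$ is non-compact the empty-interior hypothesis on $A$ is not used anywhere, consistently with the last sentence of the statement. The routine points are the bookkeeping that a single $\eta$ serves all of $X'$ (which works because, for each fixed $x$, the whole averaging sequence converges) and the Baire argument that $\bigcup_{e\in E}e^{-1}\cdot A$ is proper in the compact case --- this is exactly where the empty interior of $A$ is consumed. I expect the main substantive ingredients to be the correct invocation of the random-walk pointwise ergodic theorem, with the identification of its limit via ergodicity, and the fact, noted just after \eqref{exFp}, that weak*-cluster points of $(\beta_n)$ lie in $\cF_p$; the remainder is elementary.
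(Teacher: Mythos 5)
Your proposal is correct and follows essentially the same route as the paper: the non-syndeticity part is the same detaching argument (your reformulation via thickness of the complement of $A_xA_x^{-1}$ and the set $K=\bigcup_{e\in E}e^{-1}\cdot A$ is just the paper's ``for every finite $F$ there is $s\notin FA_xA_x^{-1}$'' with $F A\cap sA=\emptyset$, and the Baire step is used identically), and the density part is the same application of the pointwise ergodic theorem for random walks to $\chi_A$ together with the fact that weak*-cluster points of $(\beta_n)$ lie in $\cF_p$. Your extra care in fixing a single subnet limit $\eta$ and noting that the full sequence $\beta_n(\chi_{A_x})$ converges for each $x\in X'$ is a welcome clarification of a point the paper leaves implicit.
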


\begin{remark} 
Note that a locally compact detaching $G$-space $X$ cannot admit a \emph{$G$-invariant} Borel probability 
measure $\nu$. Indeed, by regularity of such a $\nu$, we can find a compact set $K \subset X$ whose measure
exceeds $1/2$. Since $X$ is detaching, we can find $s \in G$ such that $K \cap sK$ is empty. However, since $\nu$
is assumed to be $G$-invariant, this implies that
\[
\nu(K \cup sK) = \nu(K) + \nu(sK) - \nu(K \cap sK) = 2\nu(K) > 1,
\] 
which is a contradiction. 
\end{remark}

\begin{example}[Free groups acting on boundaries of trees]
\label{example1}
Let $\bF_r$, $S$ and $\sigma_1$ be as in Subsection \ref{subsec:sphere}, and let $Y$ denote the space of all
infinite reduced words in $S$, viewed as a closed subset of the compact metrizable space $S^\bN$. We note 
that $\bF_r$ acts on $X$ by concatenation and subsequent reductions. It is proved in \cite{Fu71} that this action
is extremely proximal (hence detaching) and that there is a unique $p$-stationary probability measure $\nu$ on 
$X$. Furthermore, the action is minimal and $\nu$ has full support.
\end{example}

\begin{example}[Affine groups acting on the real line]
\label{example2}
Let $G < \Aff(\bR)$ be a countable group and let $p$ be an admissible probability measure on $G$. It 
is proved in \cite{Bro} that under mild conditions, there is a unique (ergodic) $p$-stationary probability 
measure on $\bR$. One readily checks that $G$-action on $\bR$ is detaching provided that $G$ contains
an infinite subgroup of translations.
\end{example}

\begin{proof}[Proof of Theorem \ref{deta}]
Let $A \subset X$ be a proper compact set and let $F \subset G$ be a finite set. If $X$ is non-compact, then 
$FA$ must still be a proper subset of $X$, and since $X$ is a detaching $G$-space, we can find $s \in G$ such 
that $FA \cap sA$ is empty. In particular, $s \notin FA_x A_x^{-1}$ for all $x \in X$. Since $F$ is arbitrary, we 
conclude that $A_x A_x^{-1}$ cannot be syndetic for any $x$. If $X$ is compact and $A$ has empty interior, then 
$FA$ must still be a proper subset of $X$ by Baire's Category Theorem, and the same argument as above implies 
that $A_x A_x^{-1}$ cannot be syndetic for any $x \in X$. 

It remains to estimate the "size" of $A_x$. Let $(\beta_n)$ be the sequence of probability measures on $G$ defined
in \eqref{exFp}, and note that
\[
\beta_n(A_x) = \frac{1}{n} \sum_{k=0}^{n-1} \Big( \sum_{s \in G} \chi_A(s \cdot x) \, p^{*k}(s)\Big) 
\]
for all $x \in X$. Since $\nu$ is an ergodic $p$-stationary measure on $X$, the pointwise Ergodic Theorem for random walks  (see e.g. \cite{Ka}) shows that there exists a $\nu$-conull Borel set $X' \subset X$ such that 
$\beta_n(A_x) \ra \nu(A)$ forall 
$x \in X'$. Since any weak*-cluster point of $(\beta_n)$ belongs to $\cF_p$, we conclude that there exists $\eta \in \cF_p$ such that $\eta(A_x) = \nu(A)$ for all $x \in X'$.
\end{proof}

In particular, we get:

\begin{corollary}
Let $(G,p)$ be a measured group and suppose that there exists a locally compact detaching $G$-space with an 
ergodic $p$-stationary Borel probability measure. Then there exists a $\cF_p$-large set $A \subset G$ such that 
$AA^{-1}$ is \emph{not} syndetic. 
\end{corollary}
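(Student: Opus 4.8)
The plan is to read this off from Theorem \ref{deta}. That theorem produces, at $\nu$-generic points $x$ of a detaching $G$-space $X$ carrying an ergodic $p$-stationary measure $\nu$, a ``trace'' set $A_x\subset G$ that is $\cF_p$-large and whose difference set is not syndetic; what is left is to arrange the hypotheses of Theorem \ref{deta} and then harvest such a trace set as the desired $A$.

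First I would reduce to the case that $\nu$ has full support. Because $p$ is admissible, $\supp\nu$ is closed and $G$-invariant: if $x\in\supp\nu$, $g\in G$ and $B$ is a neighbourhood of $g\cdot x$, then picking $n$ with $p^{*n}(g)>0$ gives $\nu(B)\geq p^{*n}(g)\,\nu(g^{-1}B)>0$. A closed $G$-invariant subset of a detaching $G$-space is again detaching, since a pair of proper compact subsets of the subset is also such a pair in $X$. Hence I may replace $X$ by $\supp\nu$ and assume from now on that $\nu$ has full support (and, as in Theorem \ref{deta}, that $X$ is second countable).

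Next I would exhibit a compact set $A_0\subset X$ with $\nu(A_0)>0$ which, in case $X$ is compact, additionally has empty interior. If $X$ is non-compact this costs nothing: inner regularity of $\nu$ yields a compact $A_0$ with $\nu(A_0)>\frac12$, and Theorem \ref{deta} imposes no interior condition in that case. If $X$ is compact, I would pick a countable dense set $\{x_n\}\subset X$ consisting of non-atoms of $\nu$ (possible since the atoms form a countable set and a non-trivial compact extremely proximal $G$-space is perfect), choose open $U_n\ni x_n$ with $\sum_n\nu(U_n)<1$, and set $A_0=X\setminus\bigcup_nU_n$; this $A_0$ is compact, has positive $\nu$-measure, and has empty interior because it avoids the dense set $\{x_n\}$.

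Finally I would apply Theorem \ref{deta} to $X$, $\nu$ and $A_0$, obtaining $\eta\in\cF_p$ and a $\nu$-conull set $X'\subset X$ such that $\eta((A_0)_x)=\nu(A_0)$ and $(A_0)_x(A_0)_x^{-1}$ is not syndetic for every $x\in X'$. Taking any $x\in X'$ (non-empty, being conull) and setting $A=(A_0)_x\subset G$ yields $d^*_{\cF_p}(A)\geq\eta(A)=\nu(A_0)>0$, so $A$ is $\cF_p$-large, while $AA^{-1}=(A_0)_x(A_0)_x^{-1}$ is not syndetic --- exactly the assertion. Everything here is bookkeeping except the construction of $A_0$ in the compact case; the one real point there is producing positive measure and empty interior at the same time, which is the complement-of-small-neighbourhoods trick above and is where perfectness of $X$ (i.e.\ that the extremely proximal action has no isolated point) enters.
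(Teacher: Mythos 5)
Your proposal is correct and follows the route the paper intends: the paper presents this corollary as an immediate consequence of Theorem \ref{deta}, and you supply exactly the two reductions it silently omits, namely passing to $\supp\nu$ (which is closed, $G$-invariant by admissibility of $p$, and still detaching, locally compact and second countable) so as to meet the full-support hypothesis, and manufacturing a compact set of positive $\nu$-measure with empty interior in the case where $\supp\nu$ is compact. Your parenthetical caveats are in fact where the statement is genuinely fragile: a compact detaching space with an isolated point has at most two points, and for such degenerate actions (e.g.\ $\bZ$ acting on two points through $\bZ/2\bZ$, which carries an ergodic stationary measure) the corollary as literally stated fails, so the non-triviality/perfectness you invoke to get a dense set of non-atoms is not mere bookkeeping but an assumption that must implicitly be added to the hypotheses.
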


\end{document}